\documentclass{article}

\usepackage{amsmath, amsthm, amssymb,amsbsy}
\usepackage{graphicx}
\usepackage[colorlinks,citecolor=blue]{hyperref}
\usepackage{tikz}
\usetikzlibrary{shapes,arrows,positioning,calc, decorations.pathmorphing, patterns}
\usepackage{bbm}
\usepackage{float}
\usepackage{enumitem}
\usepackage{comment}
\usepackage{stmaryrd}



\usepackage[left=3cm, right=4cm, bottom=3cm]{geometry}

\usepackage[caption=false,font=normalsize,labelfon
t=sf,textfont=sf]{subfig}

\title{Universal Lossless Compression of Graphical Data}
\author{Payam Delgosha and Venkat Anantharam\footnote{This paper was
    presented in part at 2017 IEEE International Symposium on Information Theory}\\[2mm]
\small Department of Electrical Engineering and Computer Sciences\\
\small University of California, Berkeley\\
\small \{pdelgosha, ananth\} @ berkeley.edu
}

\newcommand{\evwrt}[2]{\mathbb{E}_{#1} \left [ #2 \right ] }
\newcommand{\pr}[1]{\mathbb{P} \left ( #1 \right ) }
\newcommand{\norm}[1]{\left \Vert #1 \right \Vert}
\newcommand{\snorm}[1]{\Vert #1 \Vert}
\newcommand{\one}[1]{\mathbbm{1} \left [ #1 \right ]}
\newcommand{\oneu}[1]{\mathbbm{1}_{#1}}

\newtheorem{lem}{Lemma}
\newtheorem{thm}{Theorem}
\newtheorem{definition}{Definition}
\newtheorem{prop}{Proposition}

\newtheorem{rem}{Remark}

\newcommand{\mG}{\mathcal{G}}

\newcommand{\vm}{\vec{m}}
\newcommand{\vu}{\vec{u}}
\newcommand{\vd}{\vec{d}}

\newcommand{\vmn}{\vec{m}^{(n)}}
\newcommand{\vun}{\vec{u}^{(n)}}

\newcommand{\mn}{m^{(n)}}
\newcommand{\tmn}{\widetilde{m}^{(n)}}
\newcommand{\vtmn}{\vec{\widetilde{m}}^{(n)}}
\newcommand{\un}{u^{(n)}}

\newcommand{\tun}{\widetilde{u}^{(n)}}
\newcommand{\vtun}{\vec{\widetilde{u}}^{(n)}}
\newcommand{\tG}{\widetilde{G}}

\newcommand{\mA}{\mathcal{A}}
\newcommand{\mP}{\mathcal{P}}

\newcommand{\mGb}{\bar{\mathcal{G}}}
\newcommand{\mTb}{\bar{\mathcal{T}}}
\newcommand{\mGn}{\mathcal{G}^{(n)}}

\newcommand{\mGnmnun}{\mathcal{G}^{(n)}_{\vmn, \vun}}

\newcommand{\Gn}{G^{(n)}}

\newcommand{\tGn}{\widetilde{G}^{(n)}}



\newcommand{\reals}{\mathbb{R}}

\newcommand{\nats}{\mathbb{N}}



\newcommand{\ER}{Erd\H{o}s--R\'{e}nyi }
\newcommand{\LP}{L\'{e}vy--Prokhorov }


\newcommand{\dlp}{d_\text{LP}} 
\newcommand{\bch}{ \Sigma} 
\newcommand{\bchover}{\overline{\Sigma}}
\newcommand{\bchunder}{\underbar{$\Sigma$}}
\newcommand{\condmnun}{|_{(\vmn, \vun)}} 


\let\oldmarginpar\marginpar
\renewcommand{\marginpar}[2][rectangle,draw,rounded corners,text width = 3cm, scale=0.7]{%
        \oldmarginpar{%
          \tikz \node at (0,0) [#1]{#2};}%
        }

\newcommand{\edgemark}{\Xi} 
\newcommand{\vermark}{\Theta} 

\newcommand{\vtype}{\Pi} 
\newcommand{\vvtype}{\vec{\Pi}} 
\newcommand{\vdeg}{\vec{\deg}} 

\newcommand{\nat}{\mathsf{nats}}
\allowdisplaybreaks 

\definecolor{bluenodecolor}{RGB}{62,126,176}
\definecolor{rednodecolor}{RGB}{173,61,58}

\definecolor{blueedgecolor}{RGB}{3,151,255}
\definecolor{orangeedgecolor}{RGB}{255,149,8}

\tikzstyle{nodeB} = [fill=bluenodecolor, circle, inner sep = 1.7pt]
\tikzstyle{nodeR} = [fill=rednodecolor, rectangle, inner sep = 2.3pt]
\tikzstyle{edgeB} = [very thick, blueedgecolor]
\tikzstyle{edgeO} = [very thick, orangeedgecolor, decoration = {zigzag,segment length = 0.2cm, amplitude = 0.5mm},decorate]

\newcommand{\drawedge}[4]{\draw[edge#3] (#1) -- ($(#1)!0.5!(#2)$); \draw[edge#4]
  (#2) -- ($(#2)!0.5!(#1)$);}
\newcommand{\nodelabel}[3]{\node at ($(#1)+(#2:5mm)$) {#3};}
\newcommand{\nodelabeldist}[4]{\node at ($(#1)+(#2:#3)$) {#4};}

\begin{document}

\colorlet{Cyan}{cyan}
\colorlet{Orange}{orange}
\tikzstyle{Node} = [circle,fill,inner sep=1.5pt]
\tikzstyle{Node2} = [rectangle,fill,inner sep=2pt]
\tikzstyle{Root} = [circle,fill=magenta,inner sep=1.7pt]
\tikzstyle{Root2} = [rectangle,fill=magenta,inner sep=2.5pt]
\tikzstyle{Cedge} = [Cyan, thick]
\tikzstyle{Oedge} = [Orange, densely dotted, very thick]

\maketitle

\begin{abstract}
Graphical data is comprised of a graph with marks on its edges
and vertices. The mark indicates the value of some attribute associated to 
the respective edge or vertex.
Examples of such data arise in social networks, molecular and systems biology,
and web graphs, as well as in several other application areas.
Our goal is to design schemes that can efficiently compress such graphical data without making  assumptions about its stochastic properties. Namely,
we wish
to develop a universal compression algorithm for graphical data sources. To formalize this goal, we employ the framework of local weak convergence, also called the objective method, which provides a technique to think of a marked graph as a kind of stationary stochastic processes, stationary with respect to movement between vertices of the graph. 
In recent work, we have generalized
a notion of entropy for unmarked graphs in this framework, due to Bordenave and Caputo, to the case of marked graphs.
We use this notion to evaluate the efficiency of a compression scheme. 
The lossless compression scheme we propose in this paper is then proved to be universally optimal in a precise technical sense. It is also capable of performing local data queries in the compressed form. 
\end{abstract}

\section{Introduction}
\label{sec:introduction}

Modern data often arrives in a form that is indexed by graphs or other combinatorial structures. 
This a much richer class of data objects than the traditionally
studied time series or multidimensional time series. 
Examples of such {\em graphical data} arise, for instance, in social networks, molecular and systems biology, and web graphs, as well as in several other application areas.\ 
An instance of graphical data arising in a social network 
would be a snapshot view of the network at a given time. Here 
the graph might describe whether a pair of individuals has ever
had an interaction,
while the marks on the vertices represent some characteristics of the
individuals currently of interest for the data analysis task, e.g. their
preference for coffee versus tea, and the marks on the edges the characteristics of their interaction, e.g. whether they are friends
or not.
Often the graph underlying the data is large.
Designing efficient compression schemes to store and analyze the data is 
therefore of significant importance. 
It is also desirable that the efficiency guarantee of the compression 
scheme not be dependent on the presumed accuracy of a stochastic model for it.
This motivates the desire for a universal compression scheme for graphical data sources.
Moreover, since data analysis is often the ultimate goal, the ability to make a broad class of data queries in the compressed form would be valuable.
This paper develops techniques to address these goals and achieves them
in a specific technical sense.

\subsection{Prior work}
\label{sec:prior-work}

The literature on compression and on evaluating the information content of graphical data can be divided into two categories based on whether there is a stochastic model for the graphical data or not. 
To the best of our knowledge, none of the existing works address efficient universal data compression of such data. Works that do not consider a stochastic model for the data generally propose a scheme and illustrate its performance through some analysis and simulation. For example, Boldi and Vigna proposed the WebGraph framework to encode the web graph, where each node represents a URL, and two nodes are connected if there is link between them \cite{boldi2004webgraph}. 
Later, Boldi et al.\ proposed a method called layered label propagation as a  compression scheme for social networks \cite{boldi2011layered}. 

Among models making stochastic assumptions, Choi and Szpankowski studied structural compression of the \ER ensemble $\mG(n,p)$ 
\cite{choi2012compression}. 
Magner et~al.\ presented a compression scheme for binary trees with correlated
vertex names \cite{magner2016lossless}.
Basu and Varshney addressed the problem of source coding for deep neural
networks \cite{basu2017universal}.
Aldous and Ross studied models of sparse random graphs 
(i.e. the number of edges is of the order of the number of vertices),
with vertex labels  \cite{aldous2014entropy}. They considered several models on
sparse random graphs, and studied the asymptotic behavior of the entropy of such
models. They observed that the leading term in these models scales as $n \log
n$, where $n$ is the number of vertices in the graph.

\subsection{Our contributions}
\label{sec:our-contributions}

We assume that a graph is presented to us where each vertex carries a mark coming from some fixed finite set of possible vertex marks
and each edge carries two marks, one towards each of its endpoints,  coming from some fixed finite set of 
possible edge marks.
Our goal is to design a lossless compression scheme capable of compressing this marked graph in order
to store it in an efficient way. Furthermore, we would like to do this in a universally efficient fashion, meaning that the compressor does not make any prior assumption about the statistical properties of the data, but is nevertheless efficient.

In order to make sense of the question, we employ the framework of local weak convergence,
also called the objective method
 \cite{BenjaminiSchramm01rec,aldous2004objective,aldous2007processes}. 
 This framework views a finite marked graph as a probability distribution
 on a space of rooted marked graphs. 
 Convergence of a sequence of finite marked graphs is then defined as the
 convergence of this sequence of probability 
 distributions on rooted marked graphs. Both the prelimit and the limit
 can then be thought of as a kind of rooted marked graph valued 
 stationary stochastic process, stationary with respect to changes in the root.
 
In the case of unmarked graphs, a notion of entropy, which applies for such
limits and, crucially, works on a per vertex basis, is defined in
\cite{bordenave2014large}. We have generalized this entropy notion so that it works for the limits
 arising from sequences of marked graphs  \cite{delgosha2019notion}.
Efficient universal compression is then formalized as the ability to 
compress marked graphs such that one asymptotically
pays the minimal entropy cost, the asymptotics being in the number of vertices of the graph, without prior knowledge of the limit. 
Our main contribution  is to design such a universal compression scheme and prove its optimality
in  a precise technical sense.

\begin{figure}
  \centering
  \begin{tikzpicture}[scale=0.7]
    \begin{scope}[yshift=0.7cm]
    \node[nodeB] (n1) at (0,2) {};
    \node[nodeB] (n2) at (-1,1) {};
    \node[nodeB] (n3) at (1,1) {};
    \node[nodeR] (n4) at (0,0) {};
  \end{scope}
  \begin{scope}[yshift=-0.7cm]
    \node[nodeR] (n5) at (0,0) {};
    \node[nodeB] (n6) at (-1,-1) {};
    \node[nodeB] (n7) at (1,-1) {};
    \node[nodeB] (n8) at (0,-2) {};
  \end{scope}
  \nodelabel{n1}{90}{1};
  \nodelabel{n2}{180}{2};
  \nodelabel{n3}{0}{3};
  \nodelabel{n4}{180}{4};
  \nodelabel{n5}{180}{5};
  \nodelabel{n6}{180}{6};
  \nodelabel{n7}{0}{7};
  \nodelabel{n8}{270}{8};

  \drawedge{n1}{n2}{B}{O}
  \drawedge{n1}{n3}{B}{O}
  \drawedge{n8}{n6}{B}{O}
  \drawedge{n8}{n7}{B}{O}

  \drawedge{n2}{n4}{B}{B}
  \drawedge{n3}{n4}{B}{B}
  \drawedge{n5}{n6}{B}{B}
  \drawedge{n5}{n7}{B}{B}
  \draw[edgeO] (n4) -- (n5);
  \end{tikzpicture}
  

\caption[A marked graph]{\label{fig:marked-graph} A marked graph $G$ on the vertex set
  $\{1,\dots,8\}$ where edges carry marks from $\edgemark =
  \{\text{\color{blueedgecolor} Blue (solid)}, \text{\color{orangeedgecolor}
    Orange (wavy)} \}$ (e.g.\ $\xi_G(1,2)= \text{\color{orangeedgecolor} Orange}$
  while $\xi_G(2,1) = \text{\color{blueedgecolor} Blue}$; also, $\xi_G(2,4) =
  \xi_G(4,2) = \text{\color{blueedgecolor} Blue}$) and vertices carry marks from
$\vermark = \{\tikz{\node[nodeB] at (0,0) {};}, \tikz{\node[nodeR] at (0,0)
  {};}\}$ (e.g. $\tau_G(3) = \tikz{\node[nodeB] at (0,0) {};}$).
}
\end{figure}

The paper is organized as follows. 
In Section~\ref{sec:preliminaries}, we describe the language of local weak convergence.
Section~\ref{sec:noti-entr-proc} discusses our notion of entropy. In Section~\ref{sec:main-results}, we precisely formulate the problem and state the main results. In Section~\ref{sec:univ-coding-algor}, we introduce our universal compression scheme and sketch the proof of its optimality. We make some concluding remarks in Section~\ref{sec:conclusion}.

\subsection{Notational conventions}
\label{sec:notational}

All the logarithms in this paper are to the natural base unless otherwise
stated. 
We therefore use nats instead of
bits as the unit of information. 
For two sequences $(a_n, n \ge 1)$ and $(b_n, n \ge 1)$ 
of positive real numbers, we write $a_n = O(b_n)$ if 
$\sup_n  a_n / b_n < \infty$,
and we write $a_n = o(b_n)$ if 
$a_n / b_n \rightarrow 0$ as $n \to \infty$. 
We write $\{0,1\}^* - \emptyset$ 
for the set of sequences of zeros and ones of
finite length, excluding the empty sequence.
For $x \in \{0,1\}^* - \emptyset$, 
we denote the length of the sequence
$x$ by
$\nat(x)$, which is obtained by
multiplying the length of $x$ in bits by $\log 2$. 
$\mathbb{Z}$ denotes the set of integers.
We write $:=$ and $=:$ for equality by definition. 
Other notation used in this document is introduced at its first appearance.


\section{Preliminaries}
\label{sec:preliminaries}

Let $G$ be a simple graph, i.e.\ one with no self loops or multiple edges.
We consider graphs which may have either a finite or a
countably infinite number of vertices. 
Let $\vermark$ and $\edgemark$ be finite sets called the 
set of {\em vertex marks} and the set of {\em edge marks} respectively.
A (simple) {\em marked graph} is a
simple graph $G$ where vertices carry marks from the vertex mark set
$\vermark$, and each edge carries two marks from the edge mark set $\edgemark$,
one towards each of its endpoints. We denote the mark of a vertex $v$ in a
marked graph $G$ by $\tau_G(v)$, and the mark of an edge $(i,j)$ in $G$ towards
vertex $j$ by $\xi_G(i,j)$. For a marked or an unmarked graph $G$, let $V(G)$
denote the set of vertices in $G$. For a marked or an unmarked graph $G$ and
vertices $i,j \in V(G)$, we
write $i \sim_G j$ to denote that $i$ and $j$ are adjacent in $G$. 
All graphs in this paper, marked or unmarked, are assumed to be simple. Hence,
we may drop the term ``simple'' when referring to graphs. 
See Figure~\ref{fig:marked-graph} for an example. 
A {\em marked tree} is a marked graph $T$  where the underlying graph  is a tree.

Two marked graphs $G$ and $G'$ 
are said to be isomorphic, and we write $G \equiv G'$, if there is a bijection $\phi: V(G) \rightarrow V(G')$ such that:
\begin{enumerate}
\item $\tau_{G'}(\phi(i)) = \tau_G(i)$ for all $i \in V(G)$;
\item $i \sim_G j$ iff $\phi(i) \sim_{G'} \phi(j)$;
\item For $i \sim_G j$, we have $\xi_{G'}(\phi(i), \phi(j)) = \xi_G(i,j)$. 
\end{enumerate}

To better understand this notion, let $\mathcal{S}_n$ denote the permutation
group on the set $\{1, \dots, n\}$. For a permutation $\pi \in \mathcal{S}_n$
and a marked graph $G$ on the vertex set $\{1, \dots, n\}$, let $\pi G$ be the
marked graph on the same vertex set after the permutation $\pi$ is applied on
the vertices. Namely, vertex $\pi(i)$ has mark $\tau_G(i)$ for $1 \leq i\leq n$,
and for each edge $(i,j)$ in $G$, we place an edge between the vertices $\pi(i)$ and $\pi(j)$ 
in $\pi G$, with mark $\xi_G(i,j)$ towards $\pi(j)$, and mark $\xi_G(j,i)$
towards $\pi(i)$.
Then each $\pi G$ is isomorphic to $G$ and every marked graph
that is isomorphic to $G$ is of the form $\pi G$ for some
$\pi \in \mathcal{S}_n$. 
See Figure~\ref{fig:graph-permutation} for an example.



\begin{figure}
  \centering
  \subfloat[]{
    \begin{tikzpicture}[scale=0.8]
      \node[nodeB] (n1) at (0,1) {};
      \node[nodeR] (n2) at (-1,0) {};
      \node[nodeR] (n3) at (1,0) {};
      \node[nodeB] (n4) at (0,-1) {};
      \nodelabel{n1}{90}{1};
      \nodelabel{n2}{180}{2};
      \nodelabel{n3}{0}{3};
      \nodelabel{n4}{270}{4};
      \drawedge{n1}{n2}{B}{O}
      \drawedge{n1}{n3}{B}{B}
      \drawedge{n2}{n4}{B}{B}
      \drawedge{n3}{n4}{O}{B}
    \end{tikzpicture}
  }
  \hfill
  \subfloat[]{
    \begin{tikzpicture}[scale=0.8]
            \node[nodeB] (n1) at (0,1) {};
      \node[nodeR] (n2) at (-1,0) {};
      \node[nodeB] (n3) at (1,0) {};
      \node[nodeR] (n4) at (0,-1) {};
      \nodelabel{n1}{90}{1};
      \nodelabel{n2}{180}{2};
      \nodelabel{n3}{0}{3};
      \nodelabel{n4}{270}{4};
      \drawedge{n1}{n2}{B}{O}
      \drawedge{n1}{n4}{B}{B}
      \drawedge{n2}{n3}{B}{B}
      \drawedge{n3}{n4}{B}{O}
    \end{tikzpicture}
  }
  \hfill
  \subfloat[]{
    \begin{tikzpicture}[scale=0.8]
            \node[nodeB] (n1) at (0,1) {};
      \node[nodeR] (n2) at (-1,0) {};
      \node[nodeR] (n3) at (1,0) {};
      \node[nodeB] (n4) at (0,-1) {};
      \nodelabel{n1}{90}{1};
      \nodelabel{n2}{180}{2};
      \nodelabel{n3}{0}{3};
      \nodelabel{n4}{270}{4};
      \drawedge{n1}{n2}{B}{O}
      \drawedge{n1}{n3}{B}{B}
      \drawedge{n2}{n4}{B}{B}
      \drawedge{n3}{n4}{O}{B}
    \end{tikzpicture}
  }
  \caption[Graph permutation]{\label{fig:graph-permutation} (a) A marked graph $G$ on the vertex set $\{1,\dots,4\}$ with $\edgemark =
  \{\text{\color{blueedgecolor} Blue (solid)}, \text{\color{orangeedgecolor}
    Orange (wavy)} \}$ and
$\vermark = \{\tikz{\node[nodeB] at (0,0) {};}, \tikz{\node[nodeR] at (0,0)
  {};}\}$; (b) $\pi_1 G$ where $\pi_1 = (3\,\, 4)$; (c) $\pi_2 G$ where $\pi_2 = (1\,\, 4)(2\,\, 3)$. Note that $\pi_2 G = G$.
} 
\end{figure}









A {\em path} between two vertices $v$ and $w$ in 
the marked graph $G$, is a sequence of distinct vertices $v_0, v_1, \dots, v_k$,
such that $v_0 = v$, $v_k = w$ and, for all $1 \leq i \leq k$, we have $v_{i-1}
\sim_G v_i$.  The length of such a path is defined to be $k$. The {\em distance}
between $v$ and $w$ is the length of the shortest path connecting $v$ and $w$.
If there is no such path, the distance is defined to be $\infty$. 

Given a marked graph $G$, 
and a subset $S$ of its vertices, the subgraph induced by $S$ is 
the marked graph comprised of the vertices in $S$ 
and those edges in $G$
that have both their endpoints in $S$, 
with the vertex and edge marks being inherited from $G$.
The {\em connected component} of a vertex $v \in V(G)$ is the subgraph of $G$ induced by 
the vertices that are at a finite distance from $v$. 
We write $G_v$ for the connected component of $v \in V(G)$. Note that $G_v$ is
a connected marked graph.

For 
a marked graph 
$G$
and a vertex $v \in V(G)$, we denote the degree of $v$,
i.e.\ the  
number of edges connected to $v$,
by $\deg_{G}(v)$.
Given $x, x' \in \edgemark$, we let $\deg^{x,x'}_{G}(v)$ denote the number of
neighbors $w \sim_G v$ such that $\xi_{G}(w,v) = x$ and $\xi_G(v,w) = x'$.
A marked graph $G$ is called {\em locally finite} if the degree of every
vertex in the graph is finite.

The focus on how a marked graph looks from the point of view of each of its
vertices is the key conceptual ingredient in the theory of local weak 
convergence.
For this, we introduce the notion of a
{\em rooted} marked
graph and the notion of isomorphism of rooted marked graphs. 
Roughly speaking, a rooted marked graph should be thought of as
a marked graph as seen from a specific vertex in it and the
notion of two rooted marked
graphs being isomorphic as capturing the idea that the respective marked
graphs as seen from the respective distinguished vertices look the same. 
Notice that it is natural to restrict attention to the connected component
containing the root when making such a definition, because,  
roughly speaking, a vertex of the marked graph should only be able to see the component to which it belongs.


For a precise definition, consider a marked graph $G$ and a distinguished vertex $o \in V(G)$.
The pair $(G,o)$ is called a rooted marked graph. 
We call two rooted marked graphs $(G, o)$ and $(G',o')$ isomorphic and write $(G, o) \equiv (G',o')$ if $G_o \equiv G'_{o'}$ through a bijection $\phi: V(G_o) \rightarrow V(G'_{o'})$ preserving the root, i.e.\ $\phi(o) = o'$.
This notion of isomorphism defines an equivalence relation 
on rooted marked graphs. Note that in order to determine
if two rooted marked graphs are isomorphic (as rooted marked graphs)
it is only necessary to 
examine the connected component of the root in each of the marked graphs.
Let $[G, o]$ denote the equivalence class corresponding to $(G_o, o)$. 
In the sequel, we will only use this notion for locally finite graphs.

For a rooted marked graph $(G, o)$ and integer $h\geq 1$, let $(G, o)_h$ be
the subgraph of  $G$ rooted at $o$ induced by  vertices with distance no more
than $h$ from $o$. If $h=0$,  $(G, o)_h$ is defined to  be the isolated root $o$ with mark $\tau_{G}(o)$. 
Moreover, let $[G, o]_h$ be the  equivalence class corresponding to $(G, o)_h$, i.e.\ $[G, o]_h := [(G, o)_h]$.
Note that $[G, o]_h$ depends only on  $[G, o]$.

Finally, note that all the notions introduced in this subsection have
the obvious parallels for unmarked graphs. One could simply walk through 
the definitions while taking each of the mark sets $\vermark$ and 
$\edgemark$ to be of cardinality $1$.



To close this section we introduce some notation that will only be 
needed when we develop our compression algorithm for graphical data.
For a locally finite graph $G$ and integer $\Delta$, let $G^\Delta$ be the graph with the same vertex set that includes only those
edges of $G$ such that the degrees of both their endpoints are 
at most $\Delta$ 
(without reference to their marks). 
Another way to put this is that to arrive at $G^\Delta$ from $G$
we remove all the edges in $G$ 
that are 
connected to vertices 
with degree strictly bigger than $\Delta$.
This construction is used as a technical device in the proof of the main 
result, the main point being that 
the maximum degree in $G^\Delta$ 
is at most $\Delta$.


\subsection{The Framework of Local Weak Convergence}
\label{sec:local-weak-conv}

In this section, we review the framework of local weak convergence
of graphs, also called the objective method, with our focus being on 
marked graphs.
See \cite{benjamini2011recurrence, aldous2004objective, aldous2007processes} for more details. Throughout our discussion, we assume that the set of edge marks, $\edgemark$, and the set of vertex marks, $\vermark$, are finite and fixed. 

Let $\mGb_*$ be the space of equivalence classes 
$[G, o]$
arising from locally finite rooted marked graphs $(G,o)$.
We recall again that in defining $[G, o]$ all that matters about
$(G,o)$ is the connected component of the root.
We define the metric $\bar{d}_*$ on $\mGb_*$ as follows: given $[G, o]$ and $[G',o']$, let $\hat{h}$ be the supremum over all integers $h\geq 0$ such that $(G, o)_h \equiv (G', o')_h$, where 
$(G, o)$ and $(G',o')$ are arbitrary members in equivalence classes $[G, o]$ and $[G', o']$ respectively\footnote{As all elements in an equivalence class are isomorphic, the definition is invariant under the choice of the representatives.
}.
If there is no such $h$ (which can only happen if $\tau_{G}(o) \neq \tau_{G'}(o')$), we define $\hat{h} = 0$. 
With this, $\bar{d}_*([G,o], [G',o'])$ is defined to be $1/(1+\hat{h})$. One can check that $\bar{d}_*$ is a metric; in particular, it satisfies the triangle inequality.
Moreover, 
$\mGb_*$ together with this metric is a Polish space, i.e. a 
complete separable metric space \cite{aldous2007processes}.\footnote{It is also possible to define a metric on the space of equivalence
classes of locally finite rooted marked graphs when the mark space is 
itself an arbitrary Polish space, by modifying the definition of the 
distance between two rooted marked graphs to penalize differences
in marks according to how close they are to each other. However, this
more general definition is not needed for the purposes of this paper.}
Let $\mTb_*$ be the subset of $\mGb_*$ comprised of the 
equivalence classes $[G, o]$ arising from some $(G,o)$ where
the graph underlying $G$ is a tree. 
In the sequel we will 
think of $\mGb_*$ as a Polish space with the metric $\bar{d}_*$
defined above, rather than just a set. Note that $\mTb_*$ is a closed subset of $\mGb_*$.

For a Polish space 
$\Omega$, let $\mP(\Omega)$ denote the set of Borel probability measures on
$\Omega$. We say that a sequence of measures $\mu_n$ on $\Omega$ converges
weakly to $\mu \in \mP(\Omega)$ and write $\mu_n \Rightarrow \mu$, if for any
bounded continuous function on $\Omega$, we have $\int f d \mu_n \rightarrow
\int f d \mu$.
It can be shown that it suffices to verify this condition only for
uniformly continuous and bounded functions \cite{billingsley2013convergence}.
For a Borel set $B \subset \Omega$, the $\epsilon$--extension of
$B$, denoted by $B^\epsilon$, is defined as the union of the open balls with
radius $\epsilon$ centered around the points in $B$. For two probability
measures $\mu$ and $\nu$ in $\mP(\Omega)$, the \LP distance $\dlp(\mu, \nu)$ is
defined to be the infimum of all $\epsilon>0$ such that for all Borel sets $B
\subset \Omega$ we have $\mu(B) \leq \nu(B^\epsilon) + \epsilon$ and  $\nu(B) \leq \mu(B^\epsilon) + \epsilon$.
It is known that the \LP distance metrizes the topology of weak convergence 
on the space of probability distributions on a Polish space
(see, for instance, \cite{billingsley2013convergence}).
For $x \in \Omega$, let $\delta_x$ be the Dirac measure at $x$.

For a finite marked graph $G$, define $U(G) \in \mP(\mGb_*)$ as 
\begin{equation}
  \label{eq:UG}
  U(G) := \frac{1}{|V(G)|} \sum_{o \in V(G)} \delta_{[G, o]}.
\end{equation}
Note that $U(G) \in \mP(\mGb_*)$. In creating $U(G)$
from $G$, we have created a probability distribution on 
rooted marked graphs from the given marked graph $G$ 
by rooting the graph at a vertex chosen uniformly at random. 
Furthermore, for an integer $h \geq 1$, let 
\begin{equation}
\label{eq:UkG}
  U_h(G) := \frac{1}{|V(G)|} \sum_{o \in V(G)} \delta_{[G, o]_h}.
\end{equation}
We then have $U_h(G) \in \mP(\mGb_*)$.
See Figure~\ref{fig:UG} for an example.

We say that a probability distribution $\mu$ on $\mGb_*$ is the {\em local weak limit} of a sequence of finite marked graphs $\{G_n\}_{n=1}^\infty$ when $U(G_n)$ converges weakly to $\mu$ 
(with respect to the topology induced by the metric $\bar{d}_*$).
This turns out to be equivalent to the condition that, for any finite depth $h \ge 0$, the structure of $G_n$ from the point of view of a root chosen uniformly at random and then looking around it only to depth $h$, converges in distribution to $\mu$ truncated up to depth $h$. This description of what is being captured by the definition justifies the term ``local weak convergence''.

\begin{figure}
  \centering
\hfill
\subfloat[]{
\centering
\begin{tikzpicture}[scale=0.9]
  \begin{scope}[yshift=2cm,xshift=1.5cm, scale=0.23]
    \node[nodeR] (n1) at (0,0) {};
    \node[nodeB] (n2) at (-4,-2) {};
    \node[nodeB] (n3) at (0,-2) {};
    \node[nodeR] (n4) at (4,-2) {};
    \node[nodeB] (n5) at (-2,-4) {};
    \node[nodeB] (n6) at (2,-4) {};
    \node[nodeB] (n7) at (6,-4) {};

    \drawedge{n1}{n2}{B}{B}
    \drawedge{n1}{n3}{B}{B}
    \draw[edgeO] (n1) -- (n4);
    \drawedge{n2}{n5}{O}{B}
    \drawedge{n3}{n5}{O}{B}
    \drawedge{n4}{n6}{B}{B}
    \drawedge{n4}{n7}{B}{B}

    \node at (0,2) {$\frac{1}{4}$};

  \end{scope}

  \begin{scope}[yshift=2cm,xshift=-1.5cm,scale=0.23]
    \node[nodeB] (n1) at (0,0) {};
    \node[nodeB] (n2) at (-2,-2) {};
    \node[nodeR] (n3) at (2,-2) {};
    \node[nodeB] (n4) at (0,-4) {};
    \node[nodeR] (n5) at (4,-4) {};
    \drawedge{n1}{n2}{O}{B}
    \drawedge{n1}{n3}{B}{B}
    \drawedge{n2}{n4}{B}{O}
    \drawedge{n3}{n4}{B}{B}
    \draw[edgeO] (n3) -- (n5);
    
    \node at (0,2) {$\frac{1}{2}$};

  \end{scope}

  \begin{scope}[scale=0.23]
    \node[nodeB] (n1) at (0,0) {};
    \node[nodeB] (n2) at (-2,-2) {};
    \node[nodeB] (n3) at (2,-2) {};
    \node[nodeR] (n4) at (0,-4) {};
    
    \drawedge{n1}{n2}{B}{O}
    \drawedge{n1}{n3}{B}{O}
    \drawedge{n2}{n4}{B}{B}
    \drawedge{n3}{n4}{B}{B}
    
    \node at (0,2) {$\frac{1}{4}$};
  \end{scope}
\end{tikzpicture}
\label{fig:U-d2}
}
\hfill\hfill\hfill
\subfloat[]{
\centering
\begin{tikzpicture}[scale=0.9]
  \begin{scope}[yshift=1cm,xshift=2.5cm,scale=0.23]
        \node[nodeR] (n1) at (0,0) {};
    \node[nodeB] (n2) at (-4,-2) {};
    \node[nodeB] (n3) at (0,-2) {};
    \node[nodeR] (n4) at (4,-2) {};
    \node[nodeB] (n5) at (-2,-4) {};
    \node[nodeB] (n6) at (2,-4) {};
    \node[nodeB] (n7) at (6,-4) {};
    \node[nodeB] (n8) at (4,-6) {};
    
    \drawedge{n1}{n2}{B}{B}
    \drawedge{n1}{n3}{B}{B}
    \draw[edgeO] (n1) -- (n4);
    \drawedge{n2}{n5}{O}{B}
    \drawedge{n3}{n5}{O}{B}
    \drawedge{n4}{n6}{B}{B}
    \drawedge{n4}{n7}{B}{B}
    \drawedge{n6}{n8}{O}{B}
    \drawedge{n7}{n8}{O}{B}
    
    \node at (0,2) {$\frac{1}{4}$};

  \end{scope}

  \begin{scope}[yshift=1cm,xshift=-3cm,scale=0.23]

    \node[nodeB] (n1) at (0,0) {};
    \node[nodeB] (n2) at (-2,-2) {};
    \node[nodeR] (n3) at (2,-2) {};
    \node[nodeB] (n4) at (0,-4) {};
    \node[nodeR] (n5) at (4,-4) {};
    \node[nodeB] (n6) at (2,-6) {};
    \node[nodeB] (n7) at (6,-6) {};
    \node[nodeB] (n8) at (4,-8) {};
    
    \drawedge{n1}{n2}{O}{B}
    \drawedge{n1}{n3}{B}{B}
    \drawedge{n2}{n4}{B}{O}
    \drawedge{n3}{n4}{B}{B}
    \drawedge{n5}{n6}{B}{B}
    \drawedge{n5}{n7}{B}{B}
    \drawedge{n6}{n8}{O}{B}
    \drawedge{n7}{n8}{O}{B}
    \draw[edgeO] (n3) -- (n5);

    \node at (0,2) {$\frac{1}{2}$};

  \end{scope}

  \begin{scope}[scale=0.4,yshift=-3cm]
        \begin{scope}[yshift=0.7cm]
    \node[nodeB] (n1) at (0,2) {};
    \node[nodeB] (n2) at (-1,1) {};
    \node[nodeB] (n3) at (1,1) {};
    \node[nodeR] (n4) at (0,0) {};
  \end{scope}
  \begin{scope}[yshift=-0.7cm]
    \node[nodeR] (n5) at (0,0) {};
    \node[nodeB] (n6) at (-1,-1) {};
    \node[nodeB] (n7) at (1,-1) {};
    \node[nodeB] (n8) at (0,-2) {};
  \end{scope}

  \drawedge{n1}{n2}{B}{O}
  \drawedge{n1}{n3}{B}{O}
  \drawedge{n8}{n6}{B}{O}
  \drawedge{n8}{n7}{B}{O}

  \drawedge{n2}{n4}{B}{B}
  \drawedge{n3}{n4}{B}{B}
  \drawedge{n5}{n6}{B}{B}
  \drawedge{n5}{n7}{B}{B}
  \draw[edgeO] (n4) -- (n5);

    \node at (0,4) {$\frac{1}{4}$};
  \end{scope}
\end{tikzpicture}
\label{fig:U-U}
}
\hfill
\caption{\label{fig:UG} 
With $G$ being the graph from Figure~\ref{fig:marked-graph}, 
(a) illustrates $U_2(G)$, which is a probability distribution on rooted marked graphs of depth at most 2 and
(b) depicts $U(G)$, which is a probability distribution on $\mGb_*$. 
}
\end{figure}
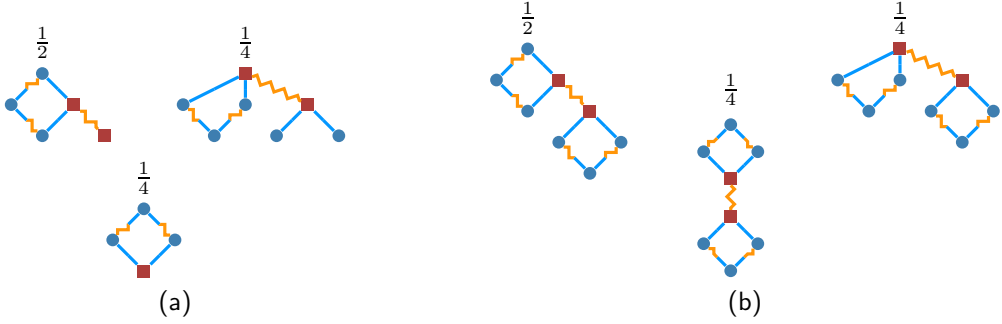

In fact, $U_h(G)$ could be thought of as the ``depth $h$ empirical distribution'' of the marked graph $G$. On the other hand, a probability distribution $\mu \in \mP(\mGb_*)$ that arises as a local weak limit plays the role of a stochastic process on graphical data, and a sequence of marked graphs $\{G_n\}_{n=1}^\infty$ could be thought of as being 
asymptotically distributed like
this process when $\mu$ is the local weak limit of the sequence.

The degree of a probability measure $\mu \in \mP(\mGb_*)$,
denoted by $\deg(\mu)$, is defined as
\begin{equation*}
  \deg(\mu) := \evwrt{\mu}{ \deg_{G}(o)} = \int \deg_{G}(o) d \mu([G, o]),
\end{equation*}
which is the expected degree of the root.
Similarly, for $\mu \in \mP(\mGb_*)$ and $x, x' \in \Xi$, let $\deg_{x,x'}(\mu)$ be defined as 
\begin{equation*}
  \deg_{x,x'}(\mu) := \int \deg_{G}^{x,x'}(o) d \mu([G, o]),
\end{equation*}
which is the expected number of edges  connected to the root with mark $x$
towards the root and mark $x'$ towards the other endpoint. 
We use the notation $\vdeg(\mu) := (\deg_{x,x'}(\mu): x, x' \in \edgemark)$.
Moreover, for $\mu \in \mP(\mGb_*)$ and $\theta \in \vermark$, let
\begin{equation}
\label{eq:vertype-probability}
  \vtype_\theta(\mu) := \mu(\{[G, o] \in \mGb_*: \tau_{G}(o) = \theta \}),
\end{equation}
which is  the probability under $\mu$ that the mark of the root is $\theta$. 
We use the notation $\vvtype(\mu) := (\vtype_\theta(\mu): \theta \in \vermark)$. 

All the preceding definitions and concepts have the obvious parallels in the 
case of unmarked graphs. These can be arrived at by simply walking through
the definitions while restricting the mark sets $\vermark$ and $\edgemark$
to be of cardinality $1$. It is convenient, however, to sometimes use
the special
notation for the unmarked case
that matches the one currently in use in the literature. 
We will therefore write $\mG_*$ for the set of rooted isomorphism classes of unmarked graphs. This is just the set $\mGb_*$ in the case where 
both $\edgemark$ and $\vermark$ are sets of cardinality $1$. 
We will also denote the metric on $\mG_*$ by $d_*$, which is
just $\bar{d}_*$ when both $\edgemark$ and $\vermark$ are sets of cardinality $1$. 


We next present some examples to illustrate the concepts defined so far.
\begin{enumerate}
\item Let $G_n$ be the finite lattice $\{-n, \dots n \} \times \{-n, \dots, n
  \}$ in $\mathbb{Z}^2$. As $n$ goes to infinity,  the local weak limit of this sequence is the distribution that gives probability one to the lattice $\mathbb{Z}^2$ rooted at the origin. The reason is that, if we fix a depth $h \ge 0$, then for $n$ large almost all of the vertices in $G_n$ cannot see the borders of the lattice when they look at the graph around them up to depth $h$, so
these vertices cannot locally distinguish the graph on which they live from the infinite lattice $\mathbb{Z}^2$.
\item Suppose $G_n$ is a cycle of length $n$. The local weak limit 
of this sequence of graphs gives probability one to an infinite $2$--regular tree
rooted at one of its vertices. The intuitive explanation for this is 
essentially identical to that for the preceding example.

\item Let $G_n$ be a realization of the sparse \ER graph $\mG(n, \alpha / n)$
where $\alpha > 0$, i.e.\  $G_n$ has $n$ vertices and each edge is independently present with
  probability $\alpha / n$. One can show that if all the $G_n$ are defined on
  a common probability space then, almost surely, the local weak limit of the
  sequence is the Poisson Galton--Watson tree with mean  $\alpha$, 
rooted at the initial vertex. To justify why this should be true without going
through the details, note that the degree of a vertex in $G_n$ is the sum of
$n-1$ independent Bernoulli random variables, each with parameter $\alpha /n$.
For $n$ large, this  approximately has a Poisson distribution with mean
$\alpha$. This argument could be repeated for any of the vertices to which the
chosen vertex is connected, which play the role of the offspring of the initial
vertex in the limit. The essential point is that the probability of having loops
in the neighborhood of a typical vertex up to a depth $h$ is  negligible
whenever $h$ is fixed and $n$ goes to infinity. 

\item \label{item:bipartite-example} Let $G_n$ be the marked bipartite graph on the 
$2n$ vertices $\{1, \ldots, 2n\}$,
the edge mark set having cardinality $1$ and the vertex mark set being
$\vermark = \{R,B\}$. Suppose $\{1, \ldots, n\}$ is the set of left
vertices, all of them having the mark $R$, and $\{n+1, \ldots, 2n\}$ is
the set of right vertices, all of them having the mark $B$. There are
$3n$ edges in the graph, comprised of the edges $(i, n+ \llbracket i \rrbracket)$, 
$(i,n+ \llbracket i+1 \rrbracket)$,
and $(i,n+ \llbracket i+2 \rrbracket)$ for $1 \le i \le n$, where
for an integer $k$, $\llbracket k \rrbracket$ is defined to be $n$ if $k \mod n
= 0$, and $k \mod n$ otherwise, so that $1 \leq \llbracket k \rrbracket \leq n$.
See Figure~\ref{fig:bip-example_graph} for an example.
The local weak limit of this sequence of graphs gives probability $\frac{1}{2}$
to the equivalence class of each of the two rooted marked infinite graphs
described below. The underlying rooted unmarked infinite graph equivalence class
for each of these two rooted marked equivalence classes is the same and 
can be described as follows: There is a single
vertex at level 0, which is the root, three vertices at level $1$, and four vertices at each
of the levels $m$ for $m \ge 2$. For the purpose of describing the limit
(there is no such numbering in the limit), one can number  the vertex at level
zero as $0$, the three vertices at level $1$ as $(1,1)$, $(1,2)$ and $(1,3)$,
and the four vertices at level $m$, for each $m \ge 2$, as 
$(m,1)$, $(m,2)$, $(m,3)$ and $(m,4)$ such that the edges are the following:
Vertex $0$ is connected to each of the vertices $(1,1)$, $(1,2)$ and
$(1,3)$. Vertex $(1,1)$ is connected to $(2,1)$ and $(2,2)$, vertex
$(1,2)$ is connected to $(2,2)$ and $(2,3)$, and vertex $(1,3)$
is connected to $(2,3)$ and $(2,4)$. The edges between
the vertices at
level $k$ and those at level $k+1$, for $k \ge 2$, are given by the pattern
$((k,1), (k+1,1))$, $((k,1), (k+1,2))$, $((k,2), (k+1,2))$,
$((k,3),(k+1,3))$, $((k,4),(k+1,3))$, $((k,4),(k+1,4))$.
There are no other edges.
As for the distinction between the two rooted marked equivalence
classes which each get probability $\frac{1}{2}$ in the limit, this 
corresponds to the distinction between
choosing the mark $R$ for the root and then alternating between
marks $B$ and $R$ as one moves from level to level, or choosing the 
mark $B$ for the root and then alternating between marks $R$ and $B$
as one moves from level to level. See Figure~\ref{fig:bip-example_limit} for an example.

\begin{figure}
  \centering
  \hfill
  \subfloat[]{
    \label{fig:bip-example_graph}
\centering
      \begin{tikzpicture}[scale=0.8]
        \begin{scope}[xshift=-1.5cm]
          \node[nodeR] (l1) at (0,0) {};
          \node[nodeR] (l2) at (0,-1) {};
          \node[nodeR] (l3) at (0,-2) {};
          \node[nodeR] (l4) at (0,-3) {};
          \node[nodeR] (l5) at (0,-4) {};
          \node[nodeR] (l6) at (0,-5) {};
          \nodelabel{l1}{180}{1};
          \nodelabel{l2}{180}{2};
          \nodelabel{l3}{180}{3};
          \nodelabel{l4}{180}{4};
          \nodelabel{l5}{180}{5};
          \nodelabel{l6}{180}{6};
        \end{scope}
        \begin{scope}[xshift=1.5cm]
          \node[nodeB] (r1) at (0,0) {};
          \node[nodeB] (r2) at (0,-1) {};
          \node[nodeB] (r3) at (0,-2) {};
          \node[nodeB] (r4) at (0,-3) {};
          \node[nodeB] (r5) at (0,-4) {};
          \node[nodeB] (r6) at (0,-5) {};
          \nodelabel{r1}{0}{7};
          \nodelabel{r2}{0}{8};
          \nodelabel{r3}{0}{9};
          \nodelabel{r4}{0}{10};
          \nodelabel{r5}{0}{11};
          \nodelabel{r6}{0}{12};
        \end{scope}
        \draw[thick]
        (l1) -- (r1) (l1) -- (r2) (l1) -- (r3)
        (l2) -- (r2) (l2) -- (r3) (l2) -- (r4)
        (l3) -- (r3) (l3) -- (r4) (l3) -- (r5)
        (l4) -- (r4) (l4) -- (r5) (l4) -- (r6)
        (l5) -- (r5) (l5) -- (r6) (l5) -- (r1)
        (l6) -- (r6) (l6) -- (r1) (l6) -- (r2);
      \end{tikzpicture}
    }%
    \hfill
    \subfloat[]{
      \label{fig:bip-example_limit}
    \begin{tikzpicture}[scale=0.45]
      \node[nodeR] (n0) at (0,0) {};
      \begin{scope}[yshift=-2cm]
        \node[nodeB] (n11) at (-4,0) {};
        \node[nodeB] (n12) at (0,0) {};
        \node[nodeB] (n13) at (4,0) {};
        \begin{scope}[yshift=-2cm]
          \node[nodeR] (n21) at (-6,0) {};
          \node[nodeR] (n22) at (-2,0) {};
          \node[nodeR] (n23) at (2,0) {};
          \node[nodeR] (n24) at (6,0) {};
          \begin{scope}[yshift=-2cm]
            \node[nodeB] (n31) at (-6,0) {};
            \node[nodeB] (n32) at (-2,0) {};
            \node[nodeB] (n33) at (2,0) {};
            \node[nodeB] (n34) at (6,0) {};
            \begin{scope}[yshift=-2cm]
              \node[nodeR] (n41) at (-6,0) {};
              \node[nodeR] (n42) at (-2,0) {};
              \node[nodeR] (n43) at (2,0) {};
              \node[nodeR] (n44) at (6,0) {};
              \begin{scope}[yshift=-0.6cm]
                \node[rotate=90] at (-4,0) {\dots};
                \node[rotate=90] at (4,0) {\dots};
              \end{scope}
            \end{scope}
          \end{scope}
        \end{scope}
      \end{scope}

      \draw[thick]
      (n0) -- (n11) (n0) -- (n12) (n0) -- (n13)
      (n21) -- (n11) -- (n22) -- (n12) -- (n23) -- (n13) -- (n24)
      (n31) -- (n21) -- (n32) -- (n22) (n23) -- (n33) -- (n24) -- (n34)
      (n41) -- (n31) -- (n42) -- (n32) (n33) -- (n43) -- (n34) -- (n44);
    \end{tikzpicture}
  }
  \hfill
  \caption[Bipartite example]{The graph in Example~\ref{item:bipartite-example}, $(a)$ illustrates
    the graph $G_6$ which has $12$ vertices and $18$ edges. The vertex mark set is
    $\vermark = \{{\color{bluenodecolor} B} (\tikz{\node[nodeB] at (0,0) {};}),
    {\color{rednodecolor} R} (\tikz{\node[nodeR] at (0,0)
  {};})\}$, and the edge mark set $\edgemark$ has cardinality 1. The local weak
limit of $G_n$ is a random rooted graph which gives probability $1/2$ to the
rooted marked infinite graph illustrated in  $(b)$, and gives probability $1/2$
to a similar rooted marked 
graph which has a structure identical to $(b)$, but the mark of each vertex is
switched from $R$ to $B$ and vice versa. }
\end{figure}


\end{enumerate}

The following lemma
gives a useful tool for establishing when local weak convergence holds. This lemma is proved in Appendix~\ref{sec:proof-equivalence-condition-lwc}.
\begin{lem}
  \label{lem:eq-condition-local-weak-convergence}
 Let $\{ \mu_n \}_{n \geq 1}$ and $\mu$ be Borel probability measures on
 $\mGb_*$ such that the support of $\mu$ is a subset of $\mTb_*$. Then $\mu_n
 \Rightarrow \mu$ iff the following condition is satisfied: For all $h \geq 0$
 and for all rooted marked trees $(T, i)$ with depth at most  $h$, if
  \begin{equation}  \label{eq:Ah}
    A^h_{(T, i)}:= \{ [G, o] \in \mGb_*: (G, o)_h \equiv (T, i) \},
  \end{equation}
then $\mu_n(A^h_{(T, i)}) \rightarrow \mu(A^h_{(T,i)})$.
\end{lem}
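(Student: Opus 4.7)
The plan is to exploit the fact that the metric $\bar{d}_*$ makes the truncation map $[G,o] \mapsto [G,o]_h$ locally constant: whenever $\bar{d}_*([G,o],[G',o']) < 1/(h+1)$, the definition forces $\hat{h} \geq h+1$, and hence $[G,o]_h = [G',o']_h$. An immediate consequence is that each $A^h_{(T,i)}$ is \emph{clopen} in $\mGb_*$: an open ball of radius $1/(h+1)$ around any point of $A^h_{(T,i)}$ stays in $A^h_{(T,i)}$, and the same ball around a point in the complement stays in the complement. With this, the ``only if'' direction is immediate from the Portmanteau theorem: if $\mu_n \Rightarrow \mu$, then $\mu_n(B) \to \mu(B)$ for every Borel set $B$ with $\mu(\partial B) = 0$, and here $\partial A^h_{(T,i)} = \emptyset$.

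For the ``if'' direction, I would first extend the convergence to non-tree rooted marked graphs of depth at most $h$. Note that the equivalence classes of depth-$\le h$ rooted marked graphs form a countable set (since $\edgemark$, $\vermark$ are finite and depth-$\le h$ locally finite graphs are finite graphs), and the corresponding clopen sets $A^h_{(T,i)}$ partition $\mGb_*$ via the depth-$h$ truncation. Since $\mu$ is supported on $\mTb_*$, we have $\mu(A^h_{(T,i)}) = 0$ whenever $(T,i)$ contains a cycle, and $\sum_{\text{trees } (T,i)} \mu(A^h_{(T,i)}) = 1$. The hypothesis combined with Fatou's lemma then gives $\liminf_n \sum_{\text{trees}} \mu_n(A^h_{(T,i)}) \geq 1$, so the non-tree portion of $\mu_n$'s mass tends to zero and in particular $\mu_n(A^h_{(T,i)}) \to 0 = \mu(A^h_{(T,i)})$ for every non-tree $(T,i)$ of depth $\le h$. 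Since this is pointwise convergence of probability mass functions on a countable set with equal total mass, Scheff\'e's lemma upgrades it to total variation: $\sum_{(T,i)} |\mu_n(A^h_{(T,i)}) - \mu(A^h_{(T,i)})| \to 0$.

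Having total variation convergence of all ``depth-$h$ marginals,'' I would then use the standard criterion that $\mu_n \Rightarrow \mu$ iff $\int f\,d\mu_n \to \int f\,d\mu$ for every bounded uniformly continuous $f$ on $\mGb_*$. Given such $f$ and $\epsilon > 0$, choose $h$ large enough that $\bar{d}_* < 1/(h+1)$ implies $|f(\cdot) - f(\cdot)| < \epsilon$, and define $\tilde{f}([G,o]) := f(\psi([G,o]_h))$, where $\psi$ sends a depth-$\le h$ equivalence class to a fixed canonical element of $\mGb_*$ representing it (e.g.\ the finite rooted marked graph obtained by adding no further edges beyond depth $h$). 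Then $\bar{d}_*([G,o], \psi([G,o]_h)) \leq 1/(h+1)$, so $\|f - \tilde{f}\|_\infty \leq \epsilon$; and since $\tilde{f}$ is constant on each $A^h_{(T,i)}$, one has $\bigl|\int \tilde{f}\,d(\mu_n - \mu)\bigr| \leq \|f\|_\infty \sum_{(T,i)} |\mu_n(A^h_{(T,i)}) - \mu(A^h_{(T,i)})| \to 0$. Combining yields $\limsup_n |\int f\,d(\mu_n - \mu)| \leq 2\epsilon$, and letting $\epsilon \downarrow 0$ finishes the proof.

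The main obstacle is the passage from the tree-only hypothesis to pointwise convergence on the \emph{full} depth-$\le h$ partition; this is where the support assumption is indispensable. Once that step is in place via Fatou, Scheff\'e takes over and the remainder is a standard approximation of uniformly continuous functions by functions factoring through the discrete truncation map.
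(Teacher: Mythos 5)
Your proposal is correct, and its converse direction is the paper's argument in different clothing: the paper simply notes that the indicator of the clopen set $A^h_{(T,i)}$ is bounded and continuous and integrates it against the definition of weak convergence, which is the same observation as your Portmanteau/empty-boundary step. Where you genuinely diverge is in the forward direction, in how the mass not accounted for by tree classes is handled. The paper fixes a bounded uniformly continuous $f$, chooses a depth $k$ from the modulus of continuity, selects \emph{finitely many} tree classes $A_1,\dots,A_m$ carrying $\mu$-mass at least $1-\epsilon$, approximates $\int f\,d\mu$ and $\int f\,d\mu_n$ by $\sum_{j\le m} f([T_j,i_j])\mu(A_j)$ and $\sum_{j\le m} f([T_j,i_j])\mu_n(A_j)$, and disposes of the leftover $\mu_n$-mass via $1-\sum_{j\le m}\mu_n(A_j)\to 1-\sum_{j\le m}\mu(A_j)\le\epsilon$; it never needs convergence on non-tree classes nor any $\ell^1$ statement. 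You instead prove a stronger intermediate fact: using the support hypothesis and Fatou you force the total $\mu_n$-mass of the non-tree depth-$h$ classes to vanish, and Scheff\'e then upgrades pointwise convergence on the full countable partition to total-variation convergence of the depth-$h$ marginals, after which the uniform-continuity approximation finishes as in the paper. Both proofs rest on the same ingredients (clopen-ness of $A^h_{(T,i)}$, the depth-$h$ truncation as a canonical representative lying within $1/(1+h)$ of the original point, and the support assumption on $\mTb_*$ to control residual mass); yours buys a stronger conclusion (convergence, indeed in $\ell^1$, over \emph{all} depth-$h$ classes rather than only trees) at the cost of invoking Fatou and Scheff\'e, while the paper's finite-subfamily estimate is more elementary. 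One small repair to your write-up: since $\bar{d}_*([G,o],[G,o]_h)$ can equal $1/(1+h)$ exactly, you should choose $h$ so that $1/(1+h)$ is strictly smaller than the uniform-continuity modulus $\delta$ (as the paper does), so that $\|f-\tilde f\|_\infty\le\epsilon$ indeed follows.
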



\subsection{Unimodularity}
\label{sec:unimodularity}

In order to get a better understanding of the nature of the 
results proved in this paper, it is important to understand what is
meant by a
{\em unimodular} probability distribution $\mu \in \mP(\mGb_*)$.
We give the relevant definitions and context in this section.

Since each vertex in $G_n$ has the same chance of being chosen as 
the root in the definition of $U(G_n)$, this should manifest itself as some
kind of stationarity property of the limit $\mu$, with respect
to changes of the root.
A probability distribution $\mu \in \mP(\mGb_*)$ is called {\em sofic} 
if there exists a sequence of finite graphs $G_n$ with local weak limit $\mu$. 
The definition of unimodularity is made in an attempt to understand
what it means for a Borel probability distribution on $\mGb_*$
to be sofic.

To define unimodularity, let $\mGb_{**}$ be the set of isomorphism classes
$[G,o,v]$ where $G$ is a marked connected graph with two distinguished vertices
$o$ and $v$ in $V(G)$ (ordered, but not necessarily distinct). Here, isomorphism
is defined by an adjacency preserving vertex bijection which preserves
vertex and edge marks, and also maps the two distinguished vertices of one
object to the respective ones of the other. A measure $\mu \in \mP(\mGb_*)$ is
said to be unimodular if, for all measurable functions $f:
\mGb_{**} \rightarrow \reals_+$, we have
\begin{equation}
  \label{eq:unim-integral}
  \int \sum_{v \in V(G)} f([G,o,v]) d\mu([G,o]) = \int \sum_{v \in V(G)} f([G,v,o]) d\mu([G,o]).
\end{equation}
Here, the summation is taken over all vertices $v$ which are in the same
connected component of $G$ as $o$. It can be seen that it suffices to check the
above condition for a function $f$ such that $f([G,o,v]) = 0$ unless $v \sim_G
o$. This is called \emph{involution invariance} \cite{aldous2007processes}.
Let $\mP_u(\mGb_*)$ denote the set of unimodular probability measures on
$\mGb_*$. Also, since $\mTb_* \subset \mGb_*$, we can define the set of
unimodular probability measures on $\mTb_*$ and denote it by $\mP_u(\mTb_*)$. 
A sofic probability measure is unimodular. Whether the other direction also
holds is unknown.


An important consequence of unimodularity is that, roughly speaking, every vertex
has a positive probability to be the root. The following is a rephrasing of
Lemma~2.3 in \cite{aldous2007processes}.

\begin{lem}[Everything Shows at the Root]
  \label{lem:everything-root}
  Let $\mu \in \mP_u(\mGb_*)$ be unimodular. If for a subset $\vermark_0
  \subset \vermark$ the mark at the root is  in $\vermark_0$ almost surely (with $[G,o]$ distributed as $\mu$), then
  the mark at every vertex is  in $\vermark_0$ almost surely. 
  Furthermore, if for
  a subset $A \subset \edgemark \times \edgemark$ it holds that for every vertex $v$ adjacent to the root $o$ the pair of edge marks
  $(\xi_G(v,o), \xi_G(o,v))$ on the edge connecting $o$ to $v$ is in $A$ almost surely (with $[G,o]$ distributed as $\mu$), then for every edge $(u,w)$ the pair of edge marks 
  $(\xi_G(u,w), \xi_G(w,u))$ is in $A$ almost surely.
\end{lem}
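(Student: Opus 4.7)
The strategy is mass transport (unimodularity) combined with induction on depth. Fix a test function $f:\mGb_{**} \to \reals_+$, apply \eqref{eq:unim-integral}, and interpret the two sides to transfer information from the root to more distant vertices. Because the hypotheses only constrain the root (resp.\ its incident edges) while the conclusions concern every vertex (resp.\ every edge) of the connected component, a single application of unimodularity will not suffice; instead we iterate, propagating the good property one graph-distance at a time.

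For Part 1, define for each $h \ge 0$
$$B_h := \{[G,o] \in \mGb_* :\ \exists\, v \in V(G),\ \dist_G(o,v) \le h,\ \tau_G(v) \notin \vermark_0\}.$$
The hypothesis is precisely $\mu(B_0)=0$, and since $\mu$-a.e.\ graph is locally finite, the target event $\bigcup_{h \ge 0} B_h$ is a countable union of null sets once we have shown $\mu(B_h) = 0$ for each $h$ by induction. For the inductive step, assume $\mu(B_h)=0$ and apply \eqref{eq:unim-integral} to
$$f([G,o,v]) := \one{v \sim_G o}\,\one{\exists\, w \in V(G):\ \dist_G(v,w) \le h,\ \tau_G(w) \notin \vermark_0}.$$
Then $\sum_{v \in V(G)} f([G,o,v])$ counts the neighbors of $o$ whose depth-$h$ ball contains a vertex with mark outside $\vermark_0$, while $\sum_{v \in V(G)} f([G,v,o]) = \deg_G(o)\cdot \one{B_h}([G,o])$. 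The right-hand side of \eqref{eq:unim-integral} is therefore $\int \deg_G(o)\,\one{B_h}\,d\mu = 0$ by the inductive hypothesis, forcing the left-hand side to vanish as well. Hence $\mu$-a.s.\ no neighbor of $o$ has a bad vertex within distance $h$; equivalently, no vertex at distance $\le h+1$ from $o$ (other than $o$ itself, handled by the base case) has mark outside $\vermark_0$. This gives $\mu(B_{h+1})=0$ and closes the induction.

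Part 2 follows by the same device. Set $H([G,u]) := \one{\exists\, w \sim_G u:\ (\xi_G(w,u), \xi_G(u,w)) \notin A}$, so that the hypothesis reads $H([G,o]) = 0$ $\mu$-a.s., and note that $H \equiv 0$ throughout the connected component of $o$ is equivalent to the desired conclusion. Repeating the previous argument verbatim with ``$\tau_G(w) \notin \vermark_0$'' replaced by ``$H([G,w]) = 1$'' throughout, one obtains $\mu(B_h)=0$ for the analogously defined events and concludes. The main obstacle is purely bookkeeping: the test function must be chosen so that $\sum_{v} f([G,v,o])$ collapses cleanly to $\deg_G(o)\cdot \one{B_h}$, which is why $f$ is deliberately restricted to $v \sim_G o$ (placing the argument squarely within involution invariance). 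Measurability of $f$ on $\mGb_{**}$ is automatic since $f$ depends only on a bounded-radius neighborhood of $(G,o,v)$, and finiteness of the sums is guaranteed $\mu$-a.s.\ by local finiteness of the graph.
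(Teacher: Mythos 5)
Your argument is correct: the test functions are genuine functions of the doubly-rooted isomorphism class, they are supported on $v \sim_G o$ so the sums collapse as you claim, and the induction on distance together with the countable union over $h$ does deliver both conclusions. Note, however, that the paper itself offers no proof here (it simply cites Lemma~2.3 of Aldous--Lyons), and the standard argument behind that citation is a \emph{single} application of \eqref{eq:unim-integral}: taking $f([G,x,y]) := \one{\tau_G(y) \notin \vermark_0}$ (with no restriction to neighbors), the right side of \eqref{eq:unim-integral} becomes $\int |V(G_o)| \one{\tau_G(o) \notin \vermark_0}\, d\mu = 0$ and the left side is the expected number of badly marked vertices in the component, which must then vanish; the edge statement is handled the same way with $f([G,x,y]) := \one{\exists\, w \sim_G y:\ (\xi_G(w,y),\xi_G(y,w)) \notin A}$. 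So your opening claim that ``a single application of unimodularity will not suffice'' is mistaken --- it is only forced on you because you chose to work within involution invariance (functions supported on adjacent pairs), and your depth-by-depth propagation is exactly the price of that choice. What your route buys is that it uses only the weaker involution-invariance form of unimodularity mentioned in the paper; what the one-shot transport buys is brevity and no induction. One further small point: local finiteness is not actually needed to write the target event as $\bigcup_h B_h$ --- every vertex of the connected component of $o$ is at finite distance from $o$ by definition --- though invoking it is harmless.
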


\section{A Notion of Entropy for Processes on Random Rooted Marked Graphs}
\label{sec:noti-entr-proc}

In this section, we introduce a notion of entropy for probability distributions
$\mu \in \mP(\mGb_*)$ with $0 < \deg(\mu) < \infty$. This is a generalization
of the notion of entropy introduced by Bordenave and Caputo
in \cite{bordenave2014large} to the marked framework we discuss in this paper,
where vertices and edges are allowed to carry marks. This generalization is due
to us, and the reader is referred to \cite{delgosha2019notion} for more details.

We first need to set up some notation. Throughout the discussion, we assume that the vertex and edge mark sets,
$\vermark$ and $\edgemark$ respectively, are fixed and finite. 
For a marked graph $G$, we define the \emph{edge mark count vector} of $G$ by
$\vm_G := (m_G(x,x'): x,x' \in \edgemark)$ where $m_G(x,x')$ for $x, x' \in
\edgemark$ is the number of edges in $G$ with the pair of marks $x$ and $x'$, i.e.\
the number of edges $(v,w)$ such that $(\xi_G(v,w), \xi_G(w,v)) = (x,x')$ or
$(\xi_G(w,v), \xi_G(v,w)) = (x,x')$. Also, we define the \emph{vertex mark count
vector} of $G$ by $\vu_G := (u_G(\theta): \theta \in \vermark)$ where
$u_G(\theta)$ denotes the number of vertices in $G$ with mark $\theta$.

We define an \emph{edge mark count vector} to be a vector of
nonnegative integers $\vm := (m(x,x'): x, x' \in \edgemark)$ such that $m(x,x')
= m(x',x)$ for all $x, x' \in \edgemark$. Likewise, a \emph{vertex mark count
  vector} is defined to be a vector of nonnegative integers $\vu := (u(\theta):
\theta \in \vermark)$. Since $\edgemark$ is finite, we may assume that it is an
ordered set. We define $\snorm{\vm}_1 := \sum_{x \leq x' \in \edgemark} m(x,x')$
and $\snorm{\vu} := \sum_{\theta \in \vermark} u(\theta)$.

Given $n \in \nats$ together with edge and vertex mark count vectors $\vm$ and
$\vu$ respectively, let $\mGn_{\vm, \vu}$ denote the set of marked graphs $G$ on the vertex
set $\{1, \dots, n\}$ such that $\vm_G = \vm$ and $\vu_G = \vu$. Note that
$\mGn_{\vm, \vu}$ is empty unless $\snorm{\vu}_1 = n$ and $\snorm{\vm}_1 \leq
\binom{n}{2}$.

We define an \emph{average degree vector} to be a vector of nonnegative reals
$\vd := (d_{x,x'} : x, x' \in \edgemark)$ such that, for all $x, x' \in
\edgemark$, we have $d_{x,x'} = d_{x',x}$, and also $\sum_{x,x' \in \edgemark}
d_{x,x'} > 0$.

\begin{definition}
\label{def:deg-seq-adapt}  

  Given an average degree vector $\vd$ and a probability distribution $Q = (q_\theta: \theta \in \vermark)$, we say that a sequence 
$(\vmn,\vun)$, comprised of edge mark count vectors and vertex mark count vectors $\vmn$ and $\vun$ respectively, is adapted to $(\vd, Q)$, if the following conditions hold:
  \begin{enumerate}
  \item For each $n$, we have $\snorm{\vmn}_1 \leq \binom{n}{2}$ and $\snorm{\vun}_1 = n$;
  \item For $x \in \edgemark$, we have $\mn(x,x) / n \rightarrow d_{x,x}/2$;
  \item For $x \neq x' \in \edgemark$, we have $\mn(x, x') / n \rightarrow d_{x,x'} = d_{x',x}$;
  \item For $\theta \in \vermark$, we have $\un(\theta) / n \rightarrow q_\theta$;
  \item For $x, x' \in \edgemark$, $d_{x,x'} = 0$ implies $\mn(x, x') = 0$ for all $n$;
  \item For $\theta \in \vermark$, $q_\theta  = 0$ implies $\un(\theta) = 0$ for all $n$.
  \end{enumerate}
\end{definition}

If $\vmn$ and $\vun$ are sequences such that $(\vmn,\vun)$ is adapted to $(\vd,
Q)$, using Stirling's approximation one can show that 
  \begin{equation}
    \label{eq:log-mGnmnun-Stirling}
    \log | \mGn_{\vmn, \vun} | =  \snorm{\vmn}_1 \log n + n H(Q)  + n \sum_{x,x' \in \edgemark} s(d_{x,x'}) + o(n),
  \end{equation}
  where
  \begin{equation*}
    s(d) :=
    \begin{cases}
      \frac{d}{2} - \frac{d}{2} \log d & d > 0, \\
      0 & d = 0.
    \end{cases}
  \end{equation*}
To simplify the notation, we may write $s(\vd)$ for $\sum_{x,x' \in \edgemark}
s(d_{x,x'})$. 

To give the definition of the BC entropy, we first define the  upper and the lower BC
entropy.
\begin{definition}
  \label{def:BC-entropy}
  
Assume $\mu \in \mP(\mGb_*)$ is given, with $0 < \deg(\mu) < \infty$. For $\epsilon>0$, and edge and vertex mark count vectors
$\vm$ and $\vu$ respectively, define
  \begin{equation*}
    \mGn_{\vm, \vu} (\mu, \epsilon) := \{ G \in \mGn_{\vm, \vu}: \dlp(U(G), \mu) < \epsilon \}.
  \end{equation*}
  Fix an average degree vector $\vd$ and a probability distribution $Q = (q_\theta:
  \theta \in \vermark)$, and also fix sequences of edge and vertex mark
  count vectors $\vmn$ and $\vun$ respectively such that $(\vmn,\vun)$ is adapted to $(\vd, Q)$. With these, define
  \begin{equation*}
    \bchover_{\vd, Q}(\mu, \epsilon)\condmnun := \limsup_{n \rightarrow \infty} \frac{\log |\mGn_{\vmn, \vun}(\mu, \epsilon)| - \snorm{\vmn}_1 \log n}{n},
  \end{equation*}
which we call the $\epsilon$--upper BC entropy. Since this is increasing
in $\epsilon$, we can define the {\em upper BC entropy} as 
  \begin{equation*}
    \bchover_{\vd, Q}(\mu)\condmnun := \lim_{\epsilon \downarrow 0} \bchover_{\vd, Q}(\mu, \epsilon)\condmnun.
  \end{equation*}
We may define the $\epsilon$--lower BC entropy $\bchunder_{\vd, Q}(\mu,
\epsilon)\condmnun$ similarly as
\begin{equation*}
    \bchunder_{\vd, Q}(\mu, \epsilon)\condmnun := \liminf_{n \rightarrow \infty} \frac{\log |\mGn_{\vmn, \vun}(\mu, \epsilon)| - \snorm{\vmn}_1 \log n}{n}.
  \end{equation*}
Since this is increasing
in $\epsilon$, we can define the {\em lower BC entropy} $\bchunder_{\vd, Q}(\mu)\condmnun$ as
\begin{equation*}
    \bchunder_{\vd, Q}(\mu)\condmnun := \lim_{\epsilon \downarrow 0} \bchunder_{\vd, Q}(\mu, \epsilon)\condmnun.
  \end{equation*}
 
    \end{definition}

Now, we state the following properties of the upper and lower marked BC entropy,
which will lead to the definition of the marked BC entropy. The reader is referred to
\cite{delgosha2019notion} for a proof and more details.

\begin{thm}[Theorem 1 in \cite{delgosha2019notion}]
  \label{thm:badcases}
  Let an average degree vector $\vd = (d_{x,x'} : x,x' \in \edgemark)$ and a
  probability distribution $Q = (q_\theta: \theta \in \vermark)$ be given. Suppose $\mu \in \mP(\mGb_*)$ with
 $0 < \deg(\mu) < \infty$ satisfies any one of the following conditions:
 \begin{enumerate}
    \item $\mu$ is not unimodular;
    \item $\mu$ is not supported on $\mTb_*$;
    \item $\deg_{x,x'}(\mu) \neq d_{x,x'}$ for some $x,x' \in \edgemark$, or $\vtype_\theta(\mu) \neq q_\theta$ for some $\theta \in \vermark$.
    \end{enumerate}
    Then, for any choice of the
    sequences $\vmn$ and $\vun$ such that $(\vmn,\vun)$ is adapted to $(\vd, Q)$, we have $\bchover_{\vd, Q}(\mu)\condmnun = -\infty$. 
  \end{thm}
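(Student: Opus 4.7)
The strategy is, in each of the three cases, to show that for sufficiently small $\epsilon > 0$ either $\mGnmnun(\mu, \epsilon)$ is empty for all large $n$, or $|\mGnmnun(\mu, \epsilon)| \le n^{\snorm{\vmn}_1 - \Omega(n)}$; in either event, $\bchover_{\vd, Q}(\mu)\condmnun = -\infty$. Case~1 is handled by a closedness argument: $U(G)$ is unimodular for every finite graph $G$ (involution invariance reduces to swapping the roles of $o$ and $v$ in a double sum), while $\mP_u(\mGb_*)$ is closed in the topology of weak convergence---equivalently under $\dlp$---since involution invariance is defined by a countable family of equalities between integrals of bounded continuous functions. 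Consequently, if $\mu \notin \mP_u(\mGb_*)$ then $\delta := \dlp(\mu, \mP_u(\mGb_*)) > 0$, and any $\epsilon < \delta$ makes $\mGnmnun(\mu, \epsilon)$ empty for all $n$.

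Case~3 contains two easy sub-cases. If $\vtype_\theta(\mu) \ne q_\theta$ for some $\theta \in \vermark$, then because $\vtype_\theta$ is a bounded continuous functional of $\mu$ (the integral of the indicator of the clopen set $\{[G, o] : \tau_G(o) = \theta\}$), weak convergence combined with the adaptedness identity $\vtype_\theta(U(G_n)) = \un(\theta)/n \to q_\theta$ forces $\vtype_\theta(\mu) = q_\theta$, contradiction. If $\deg_{x,x'}(\mu) > d_{x,x'}$ for some $x, x' \in \edgemark$, then since $\deg^{x,x'}$ is determined by $(G,o)_1$ and so locally constant (hence continuous) on $\mGb_*$, Fatou's lemma for weak convergence yields $\liminf_n \int \deg^{x,x'} \, dU(G_n) \ge \deg_{x,x'}(\mu)$; but the left-hand side equals $\mn(x,x')/n$ (or $2\mn(x,x)/n$ when $x = x'$), which converges to $d_{x,x'}$ by adaptedness, giving the contradiction $d_{x,x'} \ge \deg_{x,x'}(\mu) > d_{x,x'}$. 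In both easy sub-cases the set is eventually empty.

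The remaining situations---Case~2 and the sub-case $\deg_{x,x'}(\mu) < d_{x,x'}$---require a genuine combinatorial bound, since $\mGnmnun(\mu, \epsilon)$ may well be non-empty (as with $G_n$ a large star converging to the point mass at an isolated vertex). In each I would isolate a clopen set $A \subset \mGb_*$ with $\mu(A) > 0$ whose mass under $U(G)$ forces rigid local structure. In Case~2, take $A = \{[G, o] : (G, o)_h \text{ contains a cycle}\}$, which is clopen, with $\mu(A) > 0$ for $h$ sufficiently large since $\mu(\mTb_*^c) > 0$. For the degree-deficient sub-case, truncate at level $\Delta$: since $\int \min(\deg^{x,x'}, \Delta) \, d\mu \uparrow \deg_{x,x'}(\mu) < d_{x,x'}$, choosing $\Delta$ large forces $U(G_n)(\{[G,o] : \deg^{x,x'}(o) > \Delta\})$ bounded below by a positive constant. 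In either case, $\Omega(n)$ vertices of $G_n$ must exhibit the structural feature, and counting labeled graphs in $\mGnmnun$ by first specifying the locations of the feature (polynomially many choices per feature) and then filling in the remaining edges under the mark-count constraints yields a bound of $n^{\snorm{\vmn}_1 - \Omega(n)}$.

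The main technical obstacle is making this combinatorial bound precise. The task is to show that $\Omega(n)$ rigid structural features cost an entropy of order $n \log n$ relative to the leading-order count $\exp(\snorm{\vmn}_1 \log n + O(n))$ from \eqref{eq:log-mGnmnun-Stirling}, while interacting cleanly with the fixed mark counts $(\vmn, \vun)$ dictated by adaptedness. A natural route, adapting the configuration-model argument of \cite{bordenave2014large} to the marked setting, is to show that each short cycle at a vertex (respectively, each incidence at a high-degree vertex) reduces the number of valid mark-preserving half-edge pairings by a polynomial-in-$n$ factor; accumulated over $\Omega(n)$ features, this delivers the required $\exp(-\Omega(n) \log n)$ saving.
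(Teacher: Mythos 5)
This theorem is quoted by the paper from the companion work \cite{delgosha2019notion} and is not proved here, so there is no in-paper argument to compare yours against; judging the proposal on its own terms, the easy parts are essentially right but the core of the theorem is missing, and one sub-case is aimed at a false intermediate claim. Case~1 and the ``excess'' parts of Case~3 are fine in substance: $U(G)$ is unimodular for finite $G$, $\mP_u(\mGb_*)$ is weakly closed (though your one-line justification is glib --- $\sum_v f([G,o,v])$ is neither bounded nor continuous, so closedness needs the standard truncation argument), and since root marks and truncated degree functionals are determined by clopen depth-$1$ events, a fixed small $\epsilon$ already forces $\mGnmnun(\mu,\epsilon)=\emptyset$ for large $n$; note you should phrase this with fixed $\epsilon$ and the $\epsilon$-extension of clopen sets rather than ``weak convergence,'' since members of $\mGnmnun(\mu,\epsilon)$ for fixed $\epsilon$ need not converge to $\mu$.

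The genuine gaps are in the two hard situations. For the deficient sub-case $\deg_{x,x'}(\mu)<d_{x,x'}$, your claim that $\Omega(n)$ vertices must exhibit a local anomaly, and hence $|\mGnmnun(\mu,\epsilon)|\le n^{\snorm{\vmn}_1-\Omega(n)}$, is false --- your own star example refutes it: take $\mu$ the point mass at an isolated root and hide the $\snorm{\vmn}_1\approx d n/2$ surplus edges in a clique on $O(\epsilon n)$ (or even $O(\sqrt n)$) vertices; then $\dlp(U(G),\mu)<\epsilon$, no positive fraction of vertices sees anything unusual, and the count is of order $n^{\snorm{\vmn}_1}e^{-c(\epsilon)n}$, so for fixed $\epsilon$ the $\epsilon$-upper BC entropy is finite and only the limit $\epsilon\downarrow 0$ drives $\bchover_{\vd,Q}(\mu)\condmnun$ to $-\infty$. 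A correct treatment must exploit that the surplus edges are confined to an $O(\epsilon n)$-sized vertex set (yielding a factor like $\epsilon^{\Theta(n)}$), or reduce to the unmarked Bordenave--Caputo statement as this paper itself does inside the proof of Lemma~\ref{lem:log-Gnmn-smaller-entropy}. For Case~2 (support not in $\mTb_*$), your reduction to ``a $\mu(A)/2$-fraction of vertices see a cycle within distance $h$'' is correct, but the asserted bound $n^{\snorm{\vmn}_1-\Omega(n)}$ is precisely the hard content of the theorem and is only gestured at: the naive accounting does not visibly save a $\log n$ per feature (specifying a length-$k$ cycle costs about $n^k$ choices while consuming $k$ edges, the same exchange rate as generic edge placement), and without any degree bound the cycles within distance $h$ of a root need not be short, so the configuration-model adaptation you propose requires real work. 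As it stands, the proposal establishes the degenerate cases but not the counting lemma on which conditions~2 and the deficient half of condition~3 rest.
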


A consequence of Theorem~\ref{thm:badcases}
is that the only case of interest in the discussion of marked
BC entropy is when $\mu \in \mP_u(\mTb_*)$,
$\vd = \vdeg(\mu)$, $Q = \vvtype(\mu)$,
and the
sequences $\vmn$ and $\vun$ are such that $(\vmn,\vun)$ is adapted to
$(\vdeg(\mu), \vvtype(\mu))$.
Namely, the only upper and lower marked BC entropies of interest are 
$\bchover_{\vdeg(\mu), \vvtype(\mu)}(\mu)\condmnun$ and $\bchunder_{\vdeg(\mu), \vvtype(\mu)}(\mu)\condmnun$ respectively.

The following Theorem~\ref{thm:bch-properties} establishes that the upper and lower
marked BC entropies do not depend on the 
choice of the defining pair of sequences 
$(\vmn,\vun)$. Further, 
this theorem establishes that
the upper marked BC entropy 
is always equal to the lower marked BC entropy. The reader is referred to
\cite{delgosha2019notion} for a proof and more details. 

\begin{thm}[Theorem 2 in \cite{delgosha2019notion}]
  \label{thm:bch-properties}
  Assume that an average degree vector $\vd = (d_{x,x'} : x,x' \in \edgemark)$ together with a
  probability distribution $Q = (q_\theta: \theta \in \vermark)$ are given. For
  any  $\mu \in \mP(\mGb_*)$ such that
 $0 < \deg(\mu) < \infty$, we have 
  \begin{enumerate}
  \item \label{thm:BC-invariant} The values of $\bchover_{\vd, Q}(\mu)\condmnun$ and
    $\bchunder_{\vd, Q}(\mu)\condmnun$ are invariant under the specific choice of the
    sequences $\vmn$ and $\vun$ such that $(\vmn,\vun)$ is adapted to $(\vd, Q)$. With this,
    we may simplify the notation and unambiguously write $\bchover_{\vd, Q}(\mu)$ and
    $\bchunder_{\vd, Q}(\mu)$. 
  \item \label{thm:BC-well} 
  $\bchover_{\vd, Q}(\mu) = \bchunder_{\vd, Q}(\mu)$. 
  We may therefore unambiguously write $\bch_{\vd, Q}(\mu)$ 
for this common value,
and call it the {\em marked BC entropy} of 
$\mu \in \mP(\mGb_*)$ for the 
average degree vector $\vd$ and a probability distribution $Q = (q_\theta:
  \theta \in \vermark)$.
  Moreover, $\bch_{\vd, Q}(\mu) \in [-\infty, s(\vd) + H(Q)]$.
  \end{enumerate}
\end{thm}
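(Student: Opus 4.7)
I would tackle the three claims in order of increasing difficulty.

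\textbf{(i) Upper bound $\bch_{\vd, Q}(\mu) \leq s(\vd) + H(Q)$.} Since $\mGn_{\vmn, \vun}(\mu, \epsilon) \subseteq \mGn_{\vmn, \vun}$, Equation~(\ref{eq:log-mGnmnun-Stirling}) directly yields $\bchover_{\vd, Q}(\mu, \epsilon)\condmnun \leq s(\vd) + H(Q)$. Letting $\epsilon \downarrow 0$, and then transferring the bound to $\bchunder$ via the equality established in part~\ref{thm:BC-well}, gives the required upper bound on $\bch_{\vd,Q}(\mu)$.

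\textbf{(ii) Invariance under the choice of adapted sequences (part~\ref{thm:BC-invariant}).} Let $(\vmn, \vun)$ and $(\vtmn, \vtun)$ both be adapted to $(\vd, Q)$. The adaptation conditions force $\sum_{x,x'} |\mn(x,x') - \tmn(x,x')| = o(n)$ and $\sum_\theta |\un(\theta) - \tun(\theta)| = o(n)$. The plan is, for each $G \in \mGn_{\vmn, \vun}(\mu, \epsilon)$, to produce a collection of marked graphs in $\mGn_{\vtmn, \vtun}$ by $o(n)$ elementary surgeries: relabel $o(n)$ vertex marks and then insert or delete $o(n)$ edges (with appropriate marks) so as to realign the count vectors to $(\vtmn, \vtun)$. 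Each single surgery perturbs $U(G)$ by $O(1/n)$ in $\dlp$, so the resulting graphs remain in $\mGn_{\vtmn, \vtun}(\mu, \epsilon + \delta_n)$ for some $\delta_n \to 0$. The number of preimages of any single target graph under this procedure is bounded by $\binom{\binom{n}{2}}{o(n)} \cdot |\vermark|^{o(n)} \cdot |\edgemark|^{o(n)}$, whose logarithm is $o(n)\log n + O(n)$. Crucially, this excess is absorbed by the gap $(\snorm{\vtmn}_1 - \snorm{\vmn}_1)\log n = o(n)\log n$ between the two normalizations in Definition~\ref{def:BC-entropy}. Dividing by $n$, taking $\limsup$ (respectively $\liminf$), letting $\epsilon \downarrow 0$, and exchanging the two sequences, gives invariance for $\bchover$; the argument for $\bchunder$ is identical.

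\textbf{(iii) Equality $\bchover = \bchunder$ (part~\ref{thm:BC-well}).} This is where the main technical work lies. The plan is a near-superadditivity argument based on disjoint unions. Given $G_i \in \mathcal{G}^{(n_i)}_{\vec{m}^{(n_i)}, \vec{u}^{(n_i)}}(\mu, \epsilon)$ for $i=1,2$, consider the $\binom{n_1+n_2}{n_1}$ ways to partition $\{1,\dots,n_1+n_2\}$ into a subset of size $n_1$ hosting $G_1$ and its complement hosting $G_2$. Each resulting labeled disjoint union lies in $\mathcal{G}^{(n_1+n_2)}_{\vec{m}^{(n_1)}+\vec{m}^{(n_2)}, \vec{u}^{(n_1)}+\vec{u}^{(n_2)}}$, and its empirical distribution equals the convex combination $\frac{n_1}{n_1+n_2} U(G_1) + \frac{n_2}{n_1+n_2} U(G_2)$, which remains within $\dlp$-distance $\epsilon$ of $\mu$ because $\dlp$-balls around a fixed measure are convex. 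This yields
\[
\bigl| \mathcal{G}^{(n_1+n_2)}_{\vec{m}^{(n_1)}+\vec{m}^{(n_2)}, \vec{u}^{(n_1)}+\vec{u}^{(n_2)}}(\mu, \epsilon) \bigr| \geq \binom{n_1+n_2}{n_1} \prod_{i=1}^{2} \bigl| \mathcal{G}^{(n_i)}_{\vec{m}^{(n_i)}, \vec{u}^{(n_i)}}(\mu, \epsilon) \bigr|.
\]
Taking logs, subtracting the $\snorm{\cdot}_1 \log(\cdot)$ normalizations from both sides, and applying Stirling to $\binom{n_1+n_2}{n_1}$ produces a near-superadditivity relation for the normalized log-counts; combined with the boundedness from (i), a Fekete-type lemma then forces $\bchover_{\vd, Q}(\mu) = \bchunder_{\vd, Q}(\mu)$. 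The main obstacle is precisely the bookkeeping here: the combinatorial factor $\binom{n_1+n_2}{n_1}$, the three $\log(\cdot)$ normalizations for $n_1$, $n_2$, and $n_1+n_2$, and the Stirling corrections interact to leave a residual term essentially proportional to $(1 - \deg(\mu)/2)(n_1+n_2) H\bigl(n_1/(n_1+n_2)\bigr)$, which is not harmless when $\deg(\mu) > 2$. Handling this would require either restricting to a suitable subsequence of indices along which $n_1/n_2$ is controlled, or refining the construction to place additional edges across the two parts to match an ensemble with the correct edge counts. Once superadditivity is in hand for one specific choice of adapted sequences, the invariance from (ii) removes the apparent dependence on how $(\vec{m}^{(n_1+n_2)}, \vec{u}^{(n_1+n_2)})$ is selected, completing the argument.
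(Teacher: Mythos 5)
First, a point of reference: this paper does not prove Theorem~\ref{thm:bch-properties} at all; it is imported verbatim from the companion paper \cite{delgosha2019notion}. There, invariance and the equality $\bchover_{\vd,Q}(\mu)=\bchunder_{\vd,Q}(\mu)$ are not obtained by surgery-plus-subadditivity; they follow because all cases not covered by Theorem~\ref{thm:badcases} are handled by proving, for every adapted sequence simultaneously, a counting upper bound on $\bchover$ and a matching configuration-model-type lower bound on $\bchunder$, both equal to one explicit expression. Your program is genuinely different, and as written it has two real gaps.

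On part (ii): the claim that each elementary surgery moves $U(G)$ by $O(1/n)$ in $\dlp$ is false without controlling where the surgery happens --- inserting one edge between two vertices each adjacent to a hub of linear degree changes the depth-$h$ neighborhood of $\Theta(n)$ vertices, and graphs in $\mGn_{\vmn,\vun}(\mu,\epsilon)$ can contain such hubs (an $\epsilon$ fraction of vertices is unconstrained). One must place surgeries at vertices whose $h$-balls are small, which requires an argument you have not supplied. More seriously, the bookkeeping does not close: adaptedness only gives $k_n:=\bigl|\snorm{\vmn}_1-\snorm{\vtmn}_1\bigr|=o(n)$ and $j_n:=\sum_\theta|\un(\theta)-\tun(\theta)|=o(n)$, while your preimage bound contributes $\log\binom{\binom{n}{2}}{k_n}\approx 2k_n\log n$ for the edge surgeries plus $\approx j_n\log n$ to identify which vertices were relabeled; the normalization gap can absorb at most $k_n\log n$, and nothing compensates the vertex-mark term. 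Since $k_n,j_n$ may be as large as $n/\log\log n$, the uncancelled $o(n)\log n$ excess does not vanish after dividing by $n$, so invariance does not follow. (A two-sided double-counting of source--target pairs, rather than a one-sided preimage bound, together with the careful placement of surgeries, can repair this, but that is exactly the content missing.) On part (iii): the residual term $(1-\deg(\mu)/2)(n_1+n_2)H\bigl(n_1/(n_1+n_2)\bigr)$ you flag is not a bookkeeping nuisance but the heart of the matter --- when $\deg(\mu)>2$, disjoint unions (even along controlled subsequences or block constructions) are exponentially too few to witness the entropy, cross-edges between the parts must be inserted and counted, and doing that correctly is essentially the lower-bound construction that constitutes the actual proof. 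Moreover your step (iii) invokes (ii), which is itself gapped, and the bound $\bch_{\vd,Q}(\mu)\le s(\vd)+H(Q)$ in (i) presupposes (iii). So the proposal is a reasonable program outline, but not a proof.
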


From Theorem~\ref{thm:badcases} we conclude that unless 
$\vd = \vdeg(\mu)$, $Q = \vvtype(\mu)$, and $\mu$
  is a unimodular measure on $\mTb_*$, we have 
  $\bch_{\vd, Q}(\mu) = -\infty$. 
  In view of this, for $\mu \in \mP(\mGb_*)$
  with $0< \deg(\mu) < \infty$, we write $\bch(\mu)$
  for  $\bch_{\vdeg(\mu), \vvtype(\mu)}(\mu)$. Likewise, we may write
  $\bchunder(\mu)$ and $\bchover(\mu)$ for $\bchunder_{\vdeg(\mu),
    \vvtype(\mu)}(\mu)$ and $\bchover_{\vdeg(\mu),
    \vvtype(\mu)}(\mu)$, respectively. 
    Note that, unless $\mu \in \mP_u(\mTb_*)$, 
    we have $\bchover(\mu) = \bchunder(\mu) = \bch(\mu) = -\infty$.
    
We are now in a position to define the marked BC entropy.

\begin{definition}
  \label{def:BC-entropy-new}
  For $\mu \in \mP(\mGb_*)$
  with $0 < \deg(\mu) < \infty$, the marked BC entropy of $\mu$ is defined to be $\bch(\mu)$.
\end{definition}

\section{Main Results}
\label{sec:main-results}

Let $\mGb_n$ be the set of marked graphs on the vertex set  $\{1, \dots, n\}$, with 
edge marks from $\edgemark$ and vertex marks from $\vermark$.
Our goal is to design a 
compression scheme, comprised of compression and decompression functions $f_n$ and $g_n$ for each $n$, such that $f_n$ maps $\mGb_n$ to $\{0,1\}^* - \emptyset$ and $g_n$ maps $\{0,1\}^* - \emptyset$ to $\mGb_n$, with the condition that $g_n \circ f_n(G) = G$ for all $G \in \mGb_n$. 
Motivated by the notion of entropy introduced in the previous section, 
we want our compression scheme to be universally optimal in the following sense: if $\mu
\in \mP(\mTb_*)$ is unimodular and $\Gn$ is a sequence of marked graphs with
local weak limit $\mu$,
then, with $\vmn := \vm_{\Gn}$,
we have 
\begin{equation}    \label{eq:scheme}
  \limsup_{n \rightarrow \infty} \frac{\nat(f_n(\Gn)) - \snorm{\vmn}_1 \log n}{n} \leq \bch(\mu).
\end{equation}
In Section~\ref{sec:univ-coding-algor}, we design 
such a universally optimal compression scheme and prove its optimality. 
This is stated formally in the next theorem.

\begin{thm}
  \label{thm:universal-existence-main}
There is a  compression scheme that is optimal in the above sense
for all $\mu \in \mP_u(\mTb_*)$ such that $\deg(\mu) \in (0,\infty)$.
\end{thm}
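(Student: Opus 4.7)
The plan is to construct a two-stage universal encoder: first extract coarse global statistics from $\Gn$ at $o(n)$ cost, then encode $\Gn$ as an index into the set of marked graphs consistent with those statistics. Because $\bch(\mu)$ is defined in terms of $\log|\mGn_{\vmn,\vun}(\mu,\epsilon)|-\snorm{\vmn}_1\log n$, this is precisely the quantity the index must match; the strategy is to choose the statistics rich enough that the set of graphs consistent with them is contained in $\mGn_{\vmn,\vun}(\mu,\epsilon_n)$ for some $\epsilon_n\downarrow 0$, after which the definition of $\bch(\mu)$ furnishes the required bound on the index length.

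First I would have the encoder emit $\vmn$ and $\vun$ at cost $O(\log n)$, together with the empirical depth-$h_n$ neighborhood histogram $U_{h_n}(\Gn)$ for a slowly growing sequence $h_n\to\infty$. Since rooted marked neighborhoods of bounded depth and bounded degree form a finite alphabet and the histogram values are integer multiples of $1/n$, this histogram can be written down in $o(n)$ bits provided $h_n$ and the degree truncation introduced next grow sufficiently slowly.

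To avoid the blow-up caused by vertices of very large degree, I would use the truncation operation from Section~\ref{sec:preliminaries} with a threshold $\Delta_n\to\infty$ chosen slowly, splitting $\Gn$ into its degree-$\Delta_n$ truncation together with a residual set of edges incident to vertices of degree exceeding $\Delta_n$. Local weak convergence of $\Gn$ to $\mu$, together with $\deg(\mu)<\infty$, ensures that the residual edges are negligible in number, so they may be spelled out explicitly in $o(n)$ bits for a suitable choice of $\Delta_n$. The substantive step is then to encode the truncated graph as an index into the set of marked graphs on $\{1,\dots,n\}$ whose mark counts, maximum degree, and depth-$h_n$ histogram agree with the values already recorded. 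The key technical claim is that any such graph has $U(\cdot)$ within L\'evy--Prokhorov distance $\epsilon_n$ of $\mu$, for some $\epsilon_n\downarrow 0$: this uses that depth-$h_n$ histograms control the topology of local weak convergence, together with the fact that the recorded histogram is already close to $\mu$ truncated at depth $h_n$ because $\Gn$ has local weak limit $\mu$. Applying the definition of $\bch(\mu)$ with parameter $\epsilon_n$ then bounds the log-cardinality of the set, and hence the index length, by $\snorm{\vmn}_1\log n+n\bch(\mu)+o(n)$.

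The main obstacle I anticipate is coordinating the three rates $h_n$, $\Delta_n$, and $\epsilon_n$ so that every error term is genuinely $o(n)$: the truncation must be gentle enough that the residual costs $o(n)$ bits, $h_n$ must grow fast enough that the depth-$h_n$ histogram pins down $\mu$ to within $\epsilon_n$ in L\'evy--Prokhorov distance, and $\epsilon_n$ must tend to zero slowly enough that the BC-entropy asymptotics from Definition~\ref{def:BC-entropy} are applicable at scale $\epsilon_n$. Orchestrating these dependencies, together with the combinatorial bookkeeping needed to make the index decodable (which requires a canonical ordering of the indexed set that both encoder and decoder can agree on from the recorded statistics alone), is the heart of the technical work.
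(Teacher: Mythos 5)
Your construction is in essence the paper's own scheme (encode the depth-$k_n$ neighborhood histogram, trim at a slowly growing degree threshold $\Delta_n$, index the graph inside the class consistent with the histogram, and use the histogram to force L\'evy--Prokhorov closeness to $\mu$), but two of your steps fail as stated. First, it is not true that the residual edges are ``negligible in number'': local weak convergence together with $\deg(\mu)<\infty$ only forces the set $R_n$ of vertices incident to the removed edges to satisfy $|R_n|=o(n)$ (Lemma~\ref{lem:tn-infty-An-o(n)}); the number of removed edges themselves can be superlinear in $n$ (e.g.\ a perfect matching on most vertices together with a complete bipartite graph on $2n^{3/4}$ vertices still has the matching limit, yet roughly $n^{3/2}$ edges are trimmed). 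So they cannot be spelled out in $o(n)$ bits. The correct accounting, as in the proof of Theorem~\ref{thm:universal-existence-main}, is that listing them as pairs inside $R_n$ costs about $(\snorm{\vmn}_1-\snorm{\vtmn}_1)\log n + o(n)$ nats, and this $\log n$-scaled term is absorbed by the normalizer $\snorm{\vmn}_1\log n$ subtracted in \eqref{eq:scheme}, not bounded by $o(n)$.

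Second, and more substantively, your final step --- ``applying the definition of $\bch(\mu)$ with parameter $\epsilon_n$'' to bound $\log|\mGn_{\vmn,\vun}(\mu,\epsilon_n)|$ by $\snorm{\vmn}_1\log n + n(\bch(\mu)+o(1))$ --- is not available. Definition~\ref{def:BC-entropy} and Theorem~\ref{thm:bch-properties} control this quantity only along sequences $(\vmn,\vun)$ adapted to $(\vdeg(\mu),\vvtype(\mu))$, whereas the empirical mark count vectors of $\Gn$, and even of the trimmed graph (degrees up to $\Delta_n\to\infty$ carried by an $o(n)$ fraction of vertices can contribute $\omega(n)$ edges), need not be adapted: local weak convergence yields only $\liminf_n \mn(x,x')/n \ge \deg_{x,x'}(\mu)$, and vertex or edge marks of $\mu$-probability zero may occur in $\Gn$. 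Bridging exactly this gap is the content of Lemma~\ref{lem:log-Gnmn-smaller-entropy}, whose two-case proof (null marks with matching degrees, versus strictly excess edge density, where the count is in fact super-exponentially small) is the main technical work behind Proposition~\ref{prop:universal-opt-bounded-degree}; your sketch neither supplies this argument nor an alternative to it.
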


We also prove the following converse theorem, which justifies the claim of optimality for compression schemes that satisfy the condition in \eqref{eq:scheme}.

\begin{thm}
  \label{thm:converse}
Assume that 
a compression scheme
$\{f_n, g_n\}_{n=1}^\infty$ is given. Fix some unimodular $\mu \in \mP(\mTb_*)$
such that  $\deg(\mu) \in (0,\infty)$ and $\bch(\mu) > -\infty$. Moreover, fix a
sequence $\vmn$  and $\vun$ of edge mark count vectors and vertex mark count
vectors respectively, such that $(\vmn, \vun)$ is adapted to $(\vdeg(\mu), \vvtype(\mu))$.  Then, there exists a sequence of positive real numbers $\epsilon_n$
going to zero, together with a sequence of independent graph--valued random
variables 
$\{\Gn\}_{n=1}^\infty$ defined on a joint probability space, with $\Gn$ being
uniform in $\mG^{(n)}_{\vmn, \vun}(\mu, \epsilon_n)$, such that with
probability  one
\begin{equation*}
  \liminf_{n \rightarrow \infty} \frac{\nat(f_n(\Gn)) - \snorm{\vmn}_1\log n}{n} \geq \bch(\mu).
\end{equation*}
\end{thm}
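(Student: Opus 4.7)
The plan is a pigeonhole converse tailored to the marked BC entropy, along the lines of the standard coding-theoretic converse. Fix $\delta>0$ and consider the ``bad set''
$A_n^{(\delta)}:=\{G\in\mGnmnun(\mu,\epsilon_n):\nat(f_n(G))\le\snorm{\vmn}_1\log n+n(\bch(\mu)-\delta)\}$.
Because $g_n$ is a left inverse of $f_n$, the map $f_n$ is injective, so $|A_n^{(\delta)}|$ is bounded above by the number of nonempty binary strings of bit-length at most $(\snorm{\vmn}_1\log n+n(\bch(\mu)-\delta))/\log 2$, which is at most $2\exp\bigl(\snorm{\vmn}_1\log n+n(\bch(\mu)-\delta)\bigr)$.

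Second, I would construct $\epsilon_n\downarrow 0$ so that $|\mGnmnun(\mu,\epsilon_n)|$ still grows at essentially the marked BC-entropy rate. By Theorem \ref{thm:bch-properties} one has $\bchunder_{\vdeg(\mu),\vvtype(\mu)}(\mu)=\bch(\mu)$, and $\bchunder_{\vd,Q}(\mu,\epsilon)$ is nondecreasing in $\epsilon$ with infimum $\bch(\mu)$, so $\bchunder_{\vdeg(\mu),\vvtype(\mu)}(\mu,\epsilon)\ge\bch(\mu)$ for every $\epsilon>0$. A diagonal argument then produces $\epsilon_n\downarrow 0$ and $\delta_n\downarrow 0$ such that for all sufficiently large $n$,
\[
\log|\mGnmnun(\mu,\epsilon_n)|\;\ge\;\snorm{\vmn}_1\log n+n(\bch(\mu)-\delta_n).
\]
Concretely, for each $k\ge 1$ choose $n_k$, strictly increasing in $k$, so that the above inequality holds with $\epsilon=1/k$ and error $1/k$ for all $n\ge n_k$; then set $\epsilon_n:=1/k$ and $\delta_n:=1/k$ on each block $n_k\le n<n_{k+1}$. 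The hypothesis $\bch(\mu)>-\infty$ is used here so that these inequalities are nontrivial.

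Third, take $\{\Gn\}$ to be independent with $\Gn$ uniform on $\mGnmnun(\mu,\epsilon_n)$. Combining the two bounds gives
\[
\pr{\Gn\in A_n^{(\delta)}}\;\le\;\frac{|A_n^{(\delta)}|}{|\mGnmnun(\mu,\epsilon_n)|}\;\le\;2\exp\bigl(-n(\delta-\delta_n)\bigr),
\]
which is summable in $n$ once $\delta_n<\delta/2$. The first Borel--Cantelli lemma then implies that almost surely $\Gn\notin A_n^{(\delta)}$ for all but finitely many $n$, i.e.\ $\liminf_n(\nat(f_n(\Gn))-\snorm{\vmn}_1\log n)/n\ge\bch(\mu)-\delta$ with probability one. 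Intersecting this full-measure event over a countable sequence $\delta\downarrow 0$ yields the claim.

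The main obstacle, and the only place where the structural hypotheses $\mu\in\mP_u(\mTb_*)$, $\bch(\mu)>-\infty$, and adaptedness of $(\vmn,\vun)$ really play a role, is the diagonal construction of $\epsilon_n$: one has to turn a family of separate liminf bounds $\bchunder(\mu,\epsilon)\ge\bch(\mu)$, one per fixed $\epsilon$, into a single inequality valid at the shrinking value $\epsilon=\epsilon_n$. This leans on Theorem \ref{thm:bch-properties}, specifically the coincidence of the upper and lower marked BC entropies and their independence of the choice of $(\vmn,\vun)$. Once that construction is in hand, the pigeonhole count and the Borel--Cantelli step are entirely standard.
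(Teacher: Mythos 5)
Your proof is correct, and it reaches the conclusion by a somewhat different route than the paper. The paper first reduces to the case of a prefix--free code (using the fact that any marked graph on $n$ vertices is describable in $O(n^2)$ bits plus a length header), then applies Kraft's inequality and Markov's inequality to get $\pr{\nat(f_n(\Gn)) < \log|\mGnmnun(\mu,\epsilon_n)| - 2\log n} \le 1/n^2$, and chooses $\epsilon_n$ directly so that $\bchunder(\mu)$ equals the liminf of $\bigl(\log|\mGnmnun(\mu,\epsilon_n)| - \snorm{\vmn}_1\log n\bigr)/n$; a single application of Borel--Cantelli then finishes the argument. You instead use only the injectivity of $f_n$ (which follows from $g_n\circ f_n=\mathrm{id}$) and a pigeonhole count of binary strings of bounded length, so you never need the prefix--free reduction or Kraft's inequality; and you build $\epsilon_n$ by a block/diagonal construction from the per-$\epsilon$ bound $\bchunder_{\vdeg(\mu),\vvtype(\mu)}(\mu,\epsilon)\ge\bch(\mu)$ (monotonicity in $\epsilon$ plus Theorem~\ref{thm:bch-properties}), obtaining a lower bound on $\log|\mGnmnun(\mu,\epsilon_n)|$ valid for all large $n$ rather than an equality along a liminf. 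Your approach is more elementary on the coding side, at the cost of running Borel--Cantelli for each $\delta$ in a countable family and intersecting; the paper's Kraft--Markov route is slightly slicker in that it needs no auxiliary $\delta$ and ties the threshold directly to the cardinality of the typical set. Both arguments share the same skeleton (codeword counting versus typical-set cardinality, then almost-sure control via Borel--Cantelli, with independence of the $\Gn$ not actually needed), and both leave the same benign loose end for small $n$, where $\mGnmnun(\mu,\epsilon_n)$ may be empty and $\Gn$ must be defined arbitrarily.
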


\begin{proof}
First note that any marked graph on $n$ vertices can be represented with $O(n^2)$ bits. Hence, without loss of generality, we may assume that,
for some finite positive constant $c$, we have $\nat(f_n(\Gn)) \leq c n^2$ for all $\Gn$ on $n$ vertices. Consequently, by adding a header of size $O(\log n^2) = O(\log n)$ to the beginning of each codeword in $f_n$, in order to describe its length, we can make $f_n$ prefix--free. 
Thus, without loss of generality, we may assume that $f_n$ is prefix--free for all $n$.

  From the definition of $\bchunder(\mu)$, one can find a sequence of positive numbers $\epsilon_n$ going to zero, such that
  \begin{equation*}
    \bchunder(\mu) = \liminf_{n \rightarrow \infty} \frac{ \log |\mG^{(n)}_{\vmn, \vun}(\mu, \epsilon_n)| - \snorm{\vmn}_1 \log n}{n}.
  \end{equation*}
From Theorem~\ref{thm:bch-properties}, we have $\bchunder(\mu) = \bch(\mu)$, and since $\bch(\mu) > -\infty$ by assumption, $\mG^{(n)}_{\vmn, \vun}(\mu, \epsilon_n)$ is nonempty once $n$ is large enough. Using Kraft's inequality, we have
\[
\sum_{G \in \mG^{(n)}_{\vmn, \vun}(\mu, \epsilon_n)} e^{-\nat(f_n(G))} \leq 1. 
\]
With $\Gn$ being uniform in $\mG^{(n)}_{\vmn, \vun}(\mu, \epsilon_n)$, the Markov inequality then implies that
\begin{equation*}
  \pr{\nat(f_n(\Gn)) < \log |\mG^{(n)}_{\vmn, \vun}(\mu, \epsilon_n)| - 2\log n} \leq \frac{1}{n^2}.
\end{equation*}
From this, using the Borel-Cantelli lemma, we have $\nat(f_n(\Gn)) \geq \log |\mG^{(n)}_{\vmn, \vun}(\mu, \epsilon_n)| - 2\log n$ eventually. Therefore, with probability $1$, we have 
\begin{equation*}
\small
\begin{split}
  \liminf_{n \rightarrow \infty} \frac{\nat(f_n(\Gn)) - \snorm{\vmn}_1 \log n}{n} &\geq \liminf_{n \rightarrow \infty}  \frac{\log |\mG^{(n)}_{\vmn, \vun}(\mu, \epsilon_n)| - \snorm{\vmn}_1\log n}{n} \\
  &= \bchunder(\mu)\\
  &= \bch(\mu),
\end{split}
\end{equation*}
which completes the proof.
\end{proof}


\section{The Universal Compression Scheme}
\label{sec:univ-coding-algor}

In this section, we propose our compression scheme.  First, in Section~\ref{sec:coding-step-1-restriction-max-degree}, we introduce our compression scheme under certain assumptions. Then, in Section~\ref{sec:generel-scheme}, we relax these assumptions.

\subsection{A First--step Scheme}
\label{sec:coding-step-1-restriction-max-degree}


We first give an outline of the compression scheme, then illustrate it via an example, and finally formally 
describe it
and prove its optimality. 
Fix two sequences of integers $k_n$ and $\Delta_n$ as design parameters, which will be specified in Section~\ref{sec:generel-scheme}.
Given a marked graph $\Gn$ on $n$ vertices, 
with maximum degree no more than $\Delta_n$,
we first encode its depth--$k_n$ empirical distribution, i.e. $U_{k_n}(\Gn)$
(defined in \eqref{eq:UkG}). We do this by counting the number of times each
marked rooted graph with depth at most $k_n$ and maximum degree at most
$\Delta_n$ appears in the graph $\Gn$. Then, in the set of all graphs which result
in these counts, we specify the target graph $\Gn$. Figure~\ref{fig:alg-example}
illustrates an example of this procedure. In this example, the marked graph on
$n=4$ vertices in Figure~\ref{fig:alg-exm-graph} is given and the design
parameters $k_n = 1$ and $\Delta_n = 2$ are chosen. We then list all the rooted
marked graphs with depth at most $k_n = 1$ and maximum degree at most $\Delta_n=
2$, and count the number of times each of these patterns appears in the graph, as
depicted in Figure~\ref{fig:alg-exm-Akdelta}. Finally, we consider all the graphs that
would result in the same counts if we run this procedure on them (shown in
Figure~\ref{fig:alg-exm-Wn} for this example), and specify the input graph
within this collection of graphs. In principle, this scheme is similar to the
conventional universal coding for sequential data in which we first specify the
type of a given sequence and then specify the sequence itself among all the
sequences that have this  type. 

\begin{figure}
  \centering
  \subfloat[]{
    \label{fig:alg-exm-graph}
  \begin{tikzpicture}[scale=0.6]
        \begin{scope}[scale=0.5]
          \node[nodeB] (n1) at (0,0) {};
          \node[nodeR] (n2) at (2,0) {};
          \node[nodeR] (n3) at (0,-2) {};
          \node[nodeB] (n4) at (2,-2) {};

          \draw[thick] (n1) -- (n2) -- (n4) -- (n3) -- (n1);

          \nodelabeldist{n1}{180}{8mm}{1};
          \nodelabeldist{n2}{0}{8mm}{2};
          \nodelabeldist{n3}{180}{8mm}{3};
          \nodelabeldist{n4}{0}{8mm}{4};
    \end{scope}
  \end{tikzpicture}
}

\subfloat[]{
  \label{fig:alg-exm-Akdelta}
\begin{tikzpicture}[scale=0.76]
  \begin{scope}
    \node[nodeB] at (0,0) {};
  \end{scope}

  \begin{scope}[xshift=1cm,scale=0.5]
    \node[nodeR] at (0,0) {};
  \end{scope}

  \begin{scope}[xshift=2cm,scale=0.5]
    \node[nodeB] (n1) at (0,0) {};
    \node[nodeB] (n2) at (0,-1) {};
    \draw[thick] (n1) -- (n2);
  \end{scope}

  \begin{scope}[xshift=3cm,scale=0.5]
    \node[nodeB] (n1) at (0,0) {};
    \node[nodeR] (n2) at (0,-1) {};
    \draw[thick] (n1) -- (n2);
  \end{scope}

\begin{scope}[xshift=4cm,scale=0.5]
    \node[nodeR] (n1) at (0,0) {};
    \node[nodeB] (n2) at (0,-1) {};
    \draw[thick] (n1) -- (n2);
  \end{scope}

  \begin{scope}[xshift=5cm,scale=0.5]
    \node[nodeR] (n1) at (0,0) {};
    \node[nodeR] (n2) at (0,-1) {};
    \draw[thick] (n1) -- (n2);
  \end{scope}

\begin{scope}[xshift=6cm,scale=0.5]
  \node[nodeB] (n1) at (0,0) {};
  \node[nodeB] (n2) at (-0.5,-1) {};
  \node[nodeB] (n3) at (0.5,-1) {};
  \draw[thick] (n1) -- (n2) (n1) -- (n3);
  \end{scope}

\begin{scope}[xshift=7cm,scale=0.5]
  \node[nodeB] (n1) at (0,0) {};
  \node[nodeR] (n2) at (-0.5,-1) {};
  \node[nodeB] (n3) at (0.5,-1) {};
  \draw[thick] (n1) -- (n2) (n1) -- (n3);
  \end{scope}

\begin{scope}[xshift=8cm,scale=0.5]
  \node[nodeB] (n1) at (0,0) {};
  \node[nodeR] (n2) at (-0.5,-1) {};
  \node[nodeR] (n3) at (0.5,-1) {};
  \draw[thick] (n1) -- (n2) (n1) -- (n3);
  \end{scope}

\begin{scope}[xshift=9cm,scale=0.5]
  \node[nodeR] (n1) at (0,0) {};
  \node[nodeR] (n2) at (-0.5,-1) {};
  \node[nodeR] (n3) at (0.5,-1) {};
  \draw[thick] (n1) -- (n2) (n1) -- (n3);
  \end{scope}

\begin{scope}[xshift=10cm,scale=0.5]
  \node[nodeR] (n1) at (0,0) {};
  \node[nodeR] (n2) at (-0.5,-1) {};
  \node[nodeB] (n3) at (0.5,-1) {};
  \draw[thick] (n1) -- (n2) (n1) -- (n3);
  \end{scope}

\begin{scope}[xshift=11cm,scale=0.5]
  \node[nodeR] (n1) at (0,0) {};
  \node[nodeB] (n2) at (-0.5,-1) {};
  \node[nodeB] (n3) at (0.5,-1) {};
  \draw[thick] (n1) -- (n2) (n1) -- (n3);
  \end{scope}


  \begin{scope}[xshift=12cm,scale=0.5]
  \node[nodeB] (n1) at (0,0) {};
  \node[nodeB] (n2) at (-0.5,-1) {};
  \node[nodeB] (n3) at (0.5,-1) {};
  \draw[thick] (n1) -- (n2) -- (n3) -- (n1);
  \end{scope}

\begin{scope}[xshift=13cm,scale=0.5]
  \node[nodeB] (n1) at (0,0) {};
  \node[nodeR] (n2) at (-0.5,-1) {};
  \node[nodeB] (n3) at (0.5,-1) {};
    \draw[thick] (n1) -- (n2) -- (n3) -- (n1);

  \end{scope}

\begin{scope}[xshift=14cm,scale=0.5]
  \node[nodeB] (n1) at (0,0) {};
  \node[nodeR] (n2) at (-0.5,-1) {};
  \node[nodeR] (n3) at (0.5,-1) {};
      \draw[thick] (n1) -- (n2) -- (n3) -- (n1);
  \end{scope}

\begin{scope}[xshift=15cm,scale=0.5]
  \node[nodeR] (n1) at (0,0) {};
  \node[nodeR] (n2) at (-0.5,-1) {};
  \node[nodeR] (n3) at (0.5,-1) {};
      \draw[thick] (n1) -- (n2) -- (n3) -- (n1);
  \end{scope}

\begin{scope}[xshift=16cm,scale=0.5]
  \node[nodeR] (n1) at (0,0) {};
  \node[nodeR] (n2) at (-0.5,-1) {};
  \node[nodeB] (n3) at (0.5,-1) {};
      \draw[thick] (n1) -- (n2) -- (n3) -- (n1);
  \end{scope}

\begin{scope}[xshift=17cm,scale=0.5]
  \node[nodeR] (n1) at (0,0) {};
  \node[nodeB] (n2) at (-0.5,-1) {};
  \node[nodeB] (n3) at (0.5,-1) {};
      \draw[thick] (n1) -- (n2) -- (n3) -- (n1);
  \end{scope}

  \foreach \x in {0,...,7}{
    \node[scale=0.8] at (\x,-0.8) {0};}
  \foreach \x in {9,...,10}{
    \node[scale=0.8] at (\x,-0.8) {0};}
  \foreach \x in {12,...,17}{
    \node[scale=0.8] at (\x,-0.8) {0};}

  \node[scale=0.8] at (8,-0.8) {2};
  \node[scale=0.8] at (11,-0.8) {2};
\end{tikzpicture}

}

\subfloat[]{
\label{fig:alg-exm-Wn}
\scalebox{0.95}{
  \begin{tikzpicture}
    \begin{scope}[scale=0.65]
      \node[nodeB] (n1) at (0,0) {};
      \node[nodeR] (n2) at (1,0) {};
      \node[nodeR] (n3) at (0,-1) {};
      \node[nodeB] (n4) at (1,-1) {};

      \draw[thick] (n1) -- (n2) -- (n4) -- (n3) -- (n1);
      \node[scale=0.8] at ($(n1) + (180:4mm)$) {1};
      \node[scale=0.8] at ($(n2) + (0:4mm)$) {2};
      \node[scale=0.8] at ($(n3) + (180:4mm)$) {3};
      \node[scale=0.8] at ($(n4) + (0:4mm)$) {4};
      
    \end{scope}

    \begin{scope}[xshift=2cm,scale=0.65]
      \node[nodeR] (n1) at (0,0) {};
      \node[nodeB] (n2) at (1,0) {};
      \node[nodeB] (n3) at (0,-1) {};
      \node[nodeR] (n4) at (1,-1) {};

      \draw[thick] (n1) -- (n2) -- (n4) -- (n3) -- (n1);
      \node[scale=0.8] at ($(n1) + (180:4mm)$) {1};
      \node[scale=0.8] at ($(n2) + (0:4mm)$) {2};
      \node[scale=0.8] at ($(n3) + (180:4mm)$) {3};
      \node[scale=0.8] at ($(n4) + (0:4mm)$) {4};
    \end{scope}

    \begin{scope}[xshift=4cm,scale=0.65]
      \node[nodeB] (n1) at (0,0) {};
      \node[nodeR] (n2) at (1,0) {};
      \node[nodeB] (n3) at (0,-1) {};
      \node[nodeR] (n4) at (1,-1) {};

      \draw[thick] (n1) -- (n2) -- (n3) -- (n4) -- (n1);
      \node[scale=0.8] at ($(n1) + (180:4mm)$) {1};
      \node[scale=0.8] at ($(n2) + (0:4mm)$) {2};
      \node[scale=0.8] at ($(n3) + (180:4mm)$) {3};
      \node[scale=0.8] at ($(n4) + (0:4mm)$) {4};

    \end{scope}

   \begin{scope}[xshift=6cm,scale=0.65]
      \node[nodeR] (n1) at (0,0) {};
      \node[nodeB] (n2) at (1,0) {};
      \node[nodeR] (n3) at (0,-1) {};
      \node[nodeB] (n4) at (1,-1) {};

      \draw[thick] (n1) -- (n2) -- (n3) -- (n4) -- (n1);
      \node[scale=0.8] at ($(n1) + (180:4mm)$) {1};
      \node[scale=0.8] at ($(n2) + (0:4mm)$) {2};
      \node[scale=0.8] at ($(n3) + (180:4mm)$) {3};
      \node[scale=0.8] at ($(n4) + (0:4mm)$) {4};

    \end{scope}

   \begin{scope}[xshift=8cm,scale=0.65]
      \node[nodeB] (n1) at (0,0) {};
      \node[nodeB] (n2) at (1,0) {};
      \node[nodeR] (n3) at (0,-1) {};
      \node[nodeR] (n4) at (1,-1) {};

      \draw[thick] (n1) -- (n3) -- (n2) -- (n4) -- (n1);
      \node[scale=0.8] at ($(n1) + (180:4mm)$) {1};
      \node[scale=0.8] at ($(n2) + (0:4mm)$) {2};
      \node[scale=0.8] at ($(n3) + (180:4mm)$) {3};
      \node[scale=0.8] at ($(n4) + (0:4mm)$) {4};

    \end{scope}

    \begin{scope}[xshift=10cm,scale=0.65]
      \node[nodeR] (n1) at (0,0) {};
      \node[nodeR] (n2) at (1,0) {};
      \node[nodeB] (n3) at (0,-1) {};
      \node[nodeB] (n4) at (1,-1) {};

      \draw[thick] (n1) -- (n3) -- (n2) -- (n4) -- (n1);
      \node[scale=0.8] at ($(n1) + (180:4mm)$) {1};
      \node[scale=0.8] at ($(n2) + (0:4mm)$) {2};
      \node[scale=0.8] at ($(n3) + (180:4mm)$) {3};
      \node[scale=0.8] at ($(n4) + (0:4mm)$) {4};

    \end{scope}

  \end{tikzpicture}
}
}
  \caption[Example of coding scheme]{\label{fig:alg-example} An example of encoding via the compression function associated to our compression scheme with the
    parameter $k=1$ and  the graph $G_4$ on $n=4$ vertices, with vertex mark set $\vermark = \{\tikz{\node[nodeB] at (0,0) {};}, \tikz{\node[nodeR] at (0,0)
  {};}\}$ and edge mark set $\edgemark$ with cardinality 1, shown for (a) acting
    as the input. (b) depicts all members in the set $\mA_{1,2}$ with the
    corresponding number of times each of them appears in the graph, i.e. the
    vector $\psi_{G}$ and (c) illustrates all the graphs with the same count
    vector, i.e.  $W_4$.}
\end{figure}

Before formally explaining the compression scheme, we need some definitions. For integers $k$ and $\Delta$, let $\mA_{k, \Delta}$ be the set of rooted marked graphs $[G, o] \in \mGb_*$ with depth at most $k$ and maximum degree at most $\Delta$. Note that since $k$ and $\Delta$ are finite and the mark sets  are also finite sets, $\mA_{k, \Delta}$ is  a finite set.  

For a marked graph $\Gn$ on the vertex set $\{1, \dots, n\}$, with maximum degree at most $\Delta_n$, and for $[G, o] \in \mA_{k_n, \Delta_n}$, we denote the set 
$\{ 1 \leq i \leq n : [\Gn, i]_{k_n} = [G, o] \}$ 
by $\psi^{(n)}_{\Gn}([G, o])$.
This is  the set of vertices in $\Gn$ with local structure $[G, o]$ up to depth $k_n$. 
Recall that 
$[\Gn, i]_{k_n} = [G, o]$ 
means that $\Gn$ rooted at $i$ is isomorphic to $(G, o)$ up to 
depth $k_n$.
Note that when the maximum degree in $\Gn$ is no more than $\Delta_n$, $[\Gn,i]_{k_n}$ is a member of $\mA_{k_n, \Delta_n}$, for all $1 \leq i \leq n$. Therefore, the subsets $\psi_{\Gn}^{(n)}([G,o])$,
as $[G,o]$ ranges over $\mA_{k_n, \Delta_n}$, form a partition of $\{1, \dots, n\}$. 

We encode a marked graph $\Gn$ with vertex set $\{1, \dots, n\}$ and maximum degree no more than $\Delta_n$ as follows:
\begin{enumerate}
\item Encode the vector $(|\psi^{(n)}_{\Gn}([G,o])|, [G,o] \in \mA_{k_n,
    \Delta_n} )$. Since we have $|\psi^{(n)}_{\Gn}([G,o])|  \leq n$ for all $ [G,o] \in \mA_{k_n, \Delta_n} $, this can be done with at most 
  $|\mA_{k_n, \Delta_n}| 
  (1 + \lfloor  \log_2 n \rfloor)
  \log 2$ nats.
  \item Let $W_n$ be the set of marked graphs $G$ on the vertex set $\{1, \dots, n \}$ with degrees bounded by $\Delta_n$ such that 
    \begin{equation}
\label{eq:Wn-condiiton}
 | \psi^{(n)}_{G}([G', o']) | = |\psi_{\Gn}^{(n)}([G', o'])|,   \qquad    \forall\,\, [G', o'] \in \mA_{k_n, \Delta_n}.
    \end{equation}
Specify $\Gn$ among the elements of $W_n$ 
by sending   $(1+\lfloor  \log_2 |W_n| \rfloor) \log 2$ nats 
to the decoder.
\end{enumerate}


See Figure~\ref{fig:alg-example} for an example of the running of this procedure. 

\begin{rem}
\label{rem:first-step-local-queries}
  The vector $(\psi_{\Gn}^{(n)}([G, o]): [G, o] \in \mA_{k_n, \Delta_n})$ is directly compressed in the above scheme; therefore, we are capable of making local queries in the compressed form without going through the decompression process. An example of such a query is ``how many triangles exist in the graph?''
\end{rem}

Now we show the optimality of this compression scheme under an assumption on
$k_n$ and $\Delta_n$ that allows us to bound the size of the set $\mA_{k_n, \Delta_n}$. 
Lemma \ref{lem:A-k-delta-o(n/logn)}, which is proved in 
Appendix \ref{sec:proofs-univ-coding-algorithm}, shows that the
assumptions made in Proposition
\ref{prop:universal-opt-bounded-degree} below are not vacuous.
To avoid confusion, we use $\tilde{f}_n$ for the compression function in this section and $f_n$ for that of Section~\ref{sec:generel-scheme} (which
does not require any a priori assumed bound on the maximum degree
of the graph on $n$ vertices, $\Gn$).

\begin{prop}
\label{prop:universal-opt-bounded-degree}
  Fix the parameters $k_n$ and $\Delta_n$ 
  such that $|\mA_{k_n,
    \Delta_n}| = o ( \frac{n}{\log n})$, and $k_n \rightarrow \infty$ as
  $n\rightarrow \infty$. Assume that a  sequence of marked graphs
  $\{\Gn\}_{n=1}^\infty$ is given such that $\Gn$ is on the vertex set $\{1,
  \dots, n\}$, the  maximum degree  in $\Gn$ is no
  more than $\Delta_n$, and $\{\Gn\}_{n=1}^\infty$ converges in the local weak sense to a
  unimodular $\mu \in
  \mP_u(\mTb_*)$ as $n \to \infty$. Furthermore, assume that $\deg(\mu) \in (0,\infty)$.
  Then we have 
  \begin{equation}
\label{eq:univ-coding-optimality-prop}
      \limsup_{n \rightarrow \infty} \frac{\nat(\tilde{f}_n(\Gn)) - \snorm{\vmn}_1 \log n}{n} \leq \bch(\mu),
    \end{equation}
    where $\vmn := \vm_{\Gn}$. 
\end{prop}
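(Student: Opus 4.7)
The plan is to bound the two pieces of the encoding length separately. The header encoding the count vector $(|\psi^{(n)}_{\Gn}([G,o])|)_{[G,o] \in \mA_{k_n,\Delta_n}}$ uses at most $|\mA_{k_n,\Delta_n}|(1+\lfloor\log_2 n\rfloor)\log 2$ nats, which by the hypothesis $|\mA_{k_n,\Delta_n}| = o(n/\log n)$ is $o(n)$ and, after division by $n$, vanishes in the $\limsup$. The second piece contributes $\log |W_n| + O(1)$ nats, so the proposition reduces to establishing
\begin{equation*}
\limsup_{n\to\infty} \frac{\log |W_n| - \snorm{\vmn}_1 \log n}{n} \leq \bch(\mu).
\end{equation*}

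The central step is to embed $W_n$ into $\mG^{(n)}_{\vmn,\vun}(\mu,\epsilon_n)$ for some sequence $\epsilon_n \downarrow 0$, where $\vun := \vu_{\Gn}$. For every $G \in W_n$ the defining condition \eqref{eq:Wn-condiiton} immediately gives $U_{k_n}(G) = U_{k_n}(\Gn)$. Since the depth-$1$ truncation already determines $\vm_G$ and $\vu_G$ (one reads vertex mark counts off the roots, and recovers $m_G(x,x')$ by summing the per-vertex quantities $\deg_G^{x,x'}(\cdot)$ with the appropriate halving when $x=x'$ to correct for double counting), for $n$ large enough that $k_n \geq 1$ the inclusion $W_n \subseteq \mG^{(n)}_{\vmn,\vun}$ is automatic. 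For the $\dlp$ proximity to $\mu$, I will use the elementary pointwise bound $\bar{d}_*([H,o],[H,o]_k) \leq 1/(k+1)$, which through the identity coupling yields $\dlp(U(H), U_k(H)) \leq 1/(k+1)$ for any finite marked graph $H$. The triangle inequality then gives
\begin{equation*}
\dlp(U(G),\mu) \leq \frac{2}{k_n+1} + \dlp(U(\Gn),\mu) =: \epsilon_n,
\end{equation*}
and $\epsilon_n \to 0$ follows from $k_n \to \infty$ together with the assumed local weak convergence $U(\Gn)\Rightarrow\mu$.

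With the embedding in hand, I will verify that $(\vmn,\vun)$ is adapted to $(\vdeg(\mu),\vvtype(\mu))$: conditions (2)--(4) of Definition~\ref{def:deg-seq-adapt} follow directly from $\Gn\Rightarrow\mu$, and the failure of the ``zero-implies-zero'' clauses (5)--(6) can at worst concern $o(n)$ edges or vertices whose mark types do not appear in $\mu$, which is absorbed by passing to a nearby adapted sequence via item~\ref{thm:BC-invariant} of Theorem~\ref{thm:bch-properties}. Then for each fixed $\epsilon > 0$ we have $\epsilon_n < \epsilon$ eventually, so
\begin{equation*}
\limsup_{n\to\infty} \frac{\log |W_n| - \snorm{\vmn}_1 \log n}{n} \leq \bchover_{\vdeg(\mu),\vvtype(\mu)}(\mu,\epsilon),
\end{equation*}
and letting $\epsilon \downarrow 0$ collapses the right-hand side to $\bchover(\mu) = \bch(\mu)$ by Theorem~\ref{thm:bch-properties}. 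The main obstacle I anticipate is not the coupling-based $\dlp$ estimate (which is essentially a one-line calculation once the pointwise bound $\bar{d}_*([H,o],[H,o]_k) \leq 1/(k+1)$ is isolated), but rather the bookkeeping needed to reduce to an adapted sequence so that the BC entropy formalism applies verbatim.
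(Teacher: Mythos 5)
Your reduction of the problem to bounding $\limsup_n (\log|W_n| - \snorm{\vmn}_1\log n)/n$, and your embedding $W_n \subseteq \mG^{(n)}_{\vmn,\vun}(\mu,\epsilon_n)$ with $\epsilon_n = \dlp(U(\Gn),\mu) + 2/(1+k_n)$ (via $U_{k_n}(G)=U_{k_n}(\Gn)$, the truncation bound, and the check that $\vm_G=\vmn$, $\vu_G=\vun$), are all sound and essentially match the paper, which obtains the slightly better constant $1/(1+k_n)$ through Lemma~\ref{lem:local-isomorphism-LP-distance}. The gap is in the last step. You assert that conditions (2)--(4) of Definition~\ref{def:deg-seq-adapt} ``follow directly from $\Gn \Rightarrow \mu$.'' Only (4) does: the functional $[G,o]\mapsto \one{\tau_G(o)=\theta}$ is bounded and continuous, but $[G,o]\mapsto \deg^{x,x'}_G(o)$ is unbounded, so local weak convergence yields only the one-sided bound $\liminf_n \mn(x,x')/n \geq \deg_{x,x'}(\mu)$ (via the truncated test functions $\deg^{x,x'}\wedge t$). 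Edge mass can escape to infinity even under the standing degree bound: since $\Delta_n\to\infty$, a vanishing fraction of vertices of degree close to $\Delta_n$ (e.g.\ $n/\sqrt{\Delta_n}$ vertices forming a $\Delta_n$-regular subgraph) does not affect the local weak limit but can make $\mn(x,x')/n$ converge to a value strictly larger than $\deg_{x,x'}(\mu)$, or even diverge. In that regime $(\vmn,\vun)$ is simply not adapted to $(\vdeg(\mu),\vvtype(\mu))$, no ``nearby adapted sequence'' exists with the same counts up to $o(n)$, and the quantity you want to bound cannot be identified with $\bchover_{\vdeg(\mu),\vvtype(\mu)}(\mu,\epsilon)$ at all. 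This is precisely why the paper proves the separate Lemma~\ref{lem:log-Gnmn-smaller-entropy}, whose hypotheses are only the liminf inequalities, and whose proof splits into two cases: when the edge densities do converge to $\vdeg(\mu)$, and when some density exceeds $\deg_{x,x'}(\mu)$, where one passes to the unmarked graphs and invokes part~3 of Theorem~\ref{thm:badcases} (the Bordenave--Caputo result) to show the normalized log-count tends to $-\infty$, so the bound holds vacuously.

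A secondary, smaller issue is your treatment of clauses (5)--(6). Invariance of the BC entropy under the choice of adapted sequence (item~\ref{thm:BC-invariant} of Theorem~\ref{thm:bch-properties}) does not by itself let you replace $(\vmn,\vun)$ by an adapted pair: you must relate the cardinality $|\mG^{(n)}_{\vmn,\vun}(\mu,\epsilon_n)|$ to $|\mG^{(n)}_{\vtmn,\vtun}(\mu,2\epsilon)|$ for genuinely adapted $(\vtmn,\vtun)$. The paper does this by an explicit modification map (delete edges whose mark pair has $\deg_{x,x'}(\mu)=0$, re-mark vertices whose mark has $\vtype_\theta(\mu)=0$), controls the resulting \LP{} perturbation using Lemma~\ref{lem:everything-root} (unimodularity enters here) together with Lemma~\ref{lem:local-isomorphism-LP-distance}, and bounds the multiplicity of the map by counting how many graphs collapse to the same modified graph; the fact that only $o(n)$ edges are removed is itself a consequence of being in the convergent case, not of local weak convergence alone. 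So the ``bookkeeping'' you defer is the actual mathematical core of the optimality proof, and as written your argument does not cover the non-adapted case at all.
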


Before proving this proposition, we need the following tools.
Lemma~\ref{lem:local-isomorphism-LP-distance} is stated in a way which is
stronger than what we need here, but this stronger form will prove useful later
on. The proofs of
Lemmas~\ref{lem:local-isomorphism-LP-distance},~\ref{lem:upper-bound-on-G_n-m_mlogn-n},
and
\ref{lem:log-Gnmn-smaller-entropy} are given in Appendix~\ref{sec:proofs-univ-coding-algorithm}. 

\begin{lem}
\label{lem:local-isomorphism-LP-distance}
  Let $G$ and $G'$ be marked graphs 
  on the vertex set $\{1, \dots, n\}$. For a permutation $\pi \in \mathcal{S}_n$ 
  and an integer $h \ge 0$,
  let $L$ be the number of vertices $1 \leq i \leq n$ such that $(G, i)_h \equiv (G', \pi(i))_h$. Then, we have 
  \begin{equation*}
    \dlp(U(G), U(G')) \leq \max \left \{ \frac{1}{1+h}, 1 - \frac{L}{n} \right \}.
  \end{equation*}
\end{lem}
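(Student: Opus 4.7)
The plan is to exhibit, for any $\epsilon > \max\{1/(1+h), 1-L/n\}$, an explicit verification of the two Lévy--Prokhorov inequalities; by definition of $\dlp$ as an infimum, this is enough to conclude.

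First, let $S \subseteq \{1,\dots,n\}$ be the set of $L$ vertices $i$ such that $(G,i)_h \equiv (G',\pi(i))_h$. The starting observation is that for any such $i \in S$, the definition of $\bar{d}_*$ gives $\bar{d}_*([G,i],[G',\pi(i)]) \leq 1/(1+h)$, since the supremum $\hat h$ in the definition is at least $h$. Hence, for any Borel set $B \subseteq \mGb_*$ and any $\epsilon > 1/(1+h)$, whenever $i \in S$ with $[G,i] \in B$ we have $[G',\pi(i)] \in B^\epsilon$ (the open $\epsilon$--ball around $[G,i]$ lies in $B^\epsilon$).

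Next, I would translate this pointwise statement into a bound on measures using the fact that $\pi$ is a bijection. Writing $A := \{i : [G,i] \in B\}$, the previous step yields
\begin{equation*}
n \, U(G')(B^\epsilon) \;\geq\; |\{\pi(i) : i \in A \cap S\}| \;=\; |A \cap S| \;\geq\; |A| - (n-L),
\end{equation*}
where the last inequality is just $|A \cap S| \ge |A| + |S| - n$. Dividing by $n$ gives $U(G)(B) - U(G')(B^\epsilon) \leq 1 - L/n < \epsilon$, which is exactly one of the two LP inequalities. The reverse inequality $U(G')(B) \leq U(G)(B^\epsilon) + \epsilon$ follows by the symmetric argument applied to the bijection $\pi^{-1}$: for $i \in S$ the pair $([G',\pi(i)],[G,i])$ has the same distance bound, so $[G',j] \in B$ with $j = \pi(i) \in \pi(S)$ implies $[G,\pi^{-1}(j)] \in B^\epsilon$, and the same counting argument applies.

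Combining both directions, for every $\epsilon > \max\{1/(1+h), 1-L/n\}$ the LP condition is satisfied, so $\dlp(U(G),U(G')) \leq \max\{1/(1+h), 1-L/n\}$. There is no real obstacle here; the only subtlety is keeping the strict versus non-strict inequalities straight (the $\epsilon$-extension uses open balls, so one needs $1/(1+h) < \epsilon$ rather than $\leq$), which is handled simply by approximating the claimed bound from above and using that $\dlp$ is an infimum. The conceptual content is that the permutation $\pi$ provides an approximate coupling between the uniform distributions on vertices of $G$ and $G'$: it matches $L$ of them up to local isomorphism of depth $h$, and the remaining $n-L$ vertices contribute at most a mass discrepancy of $1 - L/n$.
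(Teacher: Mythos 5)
Your proof is correct and follows essentially the same route as the paper's: both rest on the observation that matched vertices satisfy $\bar{d}_*([G,i],[G',\pi(i)])\leq 1/(1+h)$, use the injectivity of $\pi$ to transfer mass into the $\epsilon$--extension, and charge the unmatched vertices at most $1-L/n$; your handling of the open-ball subtlety via strict $\epsilon$ and the infimum is just a repackaging of the paper's $\delta\to 0$ step.
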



\begin{lem}
  \label{lem:upper-bound-on-G_n-m_mlogn-n}
  If, for integers $n$ and $0 \leq m \leq \binom{n}{2}$, $\mG_{n,m}$ denotes the
  set of simple unmarked graphs on the vertex set $\{1, \dots, n\}$ with
  precisely $m$ edges,
we have
  \begin{equation*}
    \log |\mG_{n,m}| = \log \left | \binom{\binom{n}{2}}{m} \right | \leq m \log n + ns\left ( \frac{2m}{n} \right ),
  \end{equation*}
where $s(d) := \frac{d}{2} - \frac{d}{2} \log d$ for $d > 0$, and $s(0) := 0$. Moreover, since $s(x) \leq 1/2$ for all $x > 0$, we have, in particular,
\begin{equation*}
  \log |\mG_{n,m}| \leq m \log n + \frac{n}{2}.
\end{equation*}
\end{lem}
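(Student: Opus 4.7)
The equality is immediate: a simple graph on the labeled vertex set $\{1,\dots,n\}$ with exactly $m$ edges is specified by choosing which $m$ of the $\binom{n}{2}$ possible unordered pairs are edges, so $|\mG_{n,m}| = \binom{\binom{n}{2}}{m}$.

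For the inequality, the plan is to use the elementary bound $\binom{N}{m} \leq N^m / m!$ with $N = \binom{n}{2}$, combined with the simple estimate $\binom{n}{2} \leq n^2/2$ and Stirling's lower bound $m! \geq (m/e)^m$, i.e.\ $\log m! \geq m\log m - m$. The case $m=0$ is trivial since both sides are zero (recall $s(0) = 0$). For $m \geq 1$, taking logarithms gives
\begin{equation*}
\log \binom{\binom{n}{2}}{m} \leq m\log \binom{n}{2} - \log m! \leq m(2\log n - \log 2) - (m \log m - m).
\end{equation*}
Rearranging, the right-hand side equals $2m\log n + m - m\log(2m)$.

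Now I compute the target upper bound explicitly. Writing $d := 2m/n$, we have
\begin{equation*}
n s(d) = n\left(\frac{d}{2} - \frac{d}{2}\log d\right) = m - m\log\!\left(\frac{2m}{n}\right) = m + m\log n - m\log(2m),
\end{equation*}
so $m\log n + ns(2m/n) = 2m\log n + m - m\log(2m)$, which matches the upper bound obtained above. This yields the first displayed inequality.

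For the ``in particular'' clause, the key observation is that $s$ attains its maximum value $1/2$ at $d=1$. Indeed, differentiating on $(0,\infty)$ gives $s'(d) = -\tfrac{1}{2}\log d$, which vanishes at $d=1$, and $s''(d) = -1/(2d) < 0$, so $s$ is concave on $(0,\infty)$ with global maximum $s(1) = 1/2$; at $d=0$ we have $s(0)=0 \leq 1/2$ by definition. Thus $ns(2m/n) \leq n/2$ unconditionally, which yields the second bound. There is no real obstacle here, only routine book-keeping; the only mild care needed is separating out the $m=0$ case (where Stirling's bound in the above form is not applicable) and verifying that the algebra matching $\log\binom{N}{m}$ to $m\log n + ns(2m/n)$ works out exactly.
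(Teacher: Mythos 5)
Your argument is correct and is essentially the paper's: the paper applies the classical bound $\binom{r}{s} \leq (re/s)^s$ directly, which is exactly the combination of your two ingredients $\binom{N}{m} \leq N^m/m!$ and $m! \geq (m/e)^m$, and the same algebra identifies the result with $m\log n + n s(2m/n)$. The second claim is likewise handled in the paper by noting $s$ peaks at $d=1$ with value $1/2$, matching your calculus check; your explicit treatment of $m=0$ is a minor (harmless) addition.
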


\begin{lem}
  \label{lem:log-Gnmn-smaller-entropy}
Assume that a unimodular $\mu \in \mP_u(\mTb_*)$ is given such that $\deg(\mu)
\in (0,\infty)$. Moreover, assume that sequences of  edge and vertex mark count vectors $\vmn$ and
$\vun$ respectively are given such that
\begin{subequations}
  \begin{align}
    &\liminf_{n \rightarrow \infty} \frac{\mn(x,x')}{n} \geq \deg_{x,x'}(\mu), \qquad \forall x \neq x' \in \edgemark; \label{eq:gen-limsup-x-neq}\\
    &\liminf_{n \rightarrow \infty} \frac{\mn(x,x)}{n} \geq \frac{\deg_{x,x}(\mu)}{2}, \qquad \forall x  \in \edgemark; \label{eq:gen-limsup-x-eq}\\
    &\lim_{n \rightarrow \infty} \frac{\un(\theta)}{n} = \vtype_\theta(\mu), \qquad \forall \theta \in \vermark. \label{eq:gen-limsup-theta}
  \end{align}
\end{subequations}
Then, for any sequence $\epsilon_n$ of positive reals converging to zero, 
we have 
  \begin{equation}
    \label{eq:limsup-n-log-Gnm-bchmu}
    \limsup_{n \rightarrow \infty} \frac{ \log |\mG^{(n)}_{\vmn,\vun}(\mu, \epsilon_n)| - \norm{\vmn}_1 \log n}{n} \leq \bch(\mu).
  \end{equation}
\end{lem}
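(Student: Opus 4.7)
The plan is to reduce the problem with a non-adapted sequence $(\vmn, \vun)$ to the fully adapted case, where Theorem~\ref{thm:bch-properties} gives the bound directly. By passing to a subsequence I may assume each ratio $\mn(x,x')/n$ converges to some $d'_{x,x'} \in [\deg_{x,x'}(\mu), \infty]$. If $d'_{x,x'} = \deg_{x,x'}(\mu)$ for all $(x,x')$, then together with the hypothesis $\un(\theta)/n \to \vtype_\theta(\mu)$, the pair $(\vmn, \vun)$ is itself adapted to $(\vdeg(\mu), \vvtype(\mu))$, and the inequality follows directly from the definition of $\bchover(\mu)$ combined with Theorem~\ref{thm:bch-properties}.

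In the generic case where some $d'_{x,x'}$ strictly exceeds $\deg_{x,x'}(\mu)$, I would introduce an adapted reference sequence $\vtmn$ chosen so that $\tmn(x,x') \leq \mn(x,x')$ for $n$ large and $\tmn(x,x')/n \to \deg_{x,x'}(\mu)$. For each $G \in \mGnmnun(\mu, \epsilon_n)$, I construct $\widetilde{G}$ by deleting $e_n(x,x') := \mn(x,x') - \tmn(x,x')$ edges of mark type $(x,x')$, chosen by a canonical rule that removes edges incident to the highest-degree vertices first, with ties broken by a fixed ordering of vertex pairs. Since $U(G) \approx \mu$ and $\deg(\mu) < \infty$, only a vanishing fraction of vertices of $G$ can have degree above a slowly-growing threshold, so the depth-$h$ neighborhood of most vertices is unaffected by this surgery. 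Applying Lemma~\ref{lem:local-isomorphism-LP-distance} with the identity permutation, and letting the depth $h \to \infty$ slowly with $n$, yields $\dlp(U(G), U(\widetilde{G})) \to 0$, so $\widetilde{G} \in \mathcal{G}^{(n)}_{\vtmn, \vun}(\mu, \epsilon'_n)$ for a suitable $\epsilon'_n \to 0$. The fiber of the map $G \mapsto \widetilde{G}$ is bounded via Lemma~\ref{lem:upper-bound-on-G_n-m_mlogn-n} by $\sum_{x \leq x'} \bigl( e_n(x,x') \log n + n\, s(2 e_n(x,x')/n) \bigr) + O(n)$. Combining with the adapted bound for $\vtmn$ given by Theorem~\ref{thm:bch-properties}, and using $\snorm{\vmn}_1 = \snorm{\vtmn}_1 + \sum_{x\leq x'} e_n(x,x')$ to cancel the $\log n$ terms, the normalized log ratio is bounded above by $\bch(\mu) + \sum_{x \leq x'} s(2 e_n(x,x')/n) + o(1)$.

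The main obstacle is the residual term $\sum_{x \leq x'} s(2 e_n(x,x')/n)$, which is nonzero whenever there is genuine excess ($e_n(x,x')/n \not\to 0$). To absorb it, one must sharpen the fiber bound by exploiting that the excess edges cannot be placed arbitrarily without violating $U(G) \approx \mu$: they are forced to lie within the small ``heavy'' subset $B_n \subset V(G)$ consisting of vertices whose degree exceeds a slowly-growing threshold $\Delta_n$. Using Markov's inequality together with local weak convergence one obtains $|B_n|/n \to 0$, and counting valid placements of excess marked edges with at least one endpoint in $B_n$ yields a contribution strictly smaller than the naive Lemma~\ref{lem:upper-bound-on-G_n-m_mlogn-n} bound, precisely enough to absorb the $s(\cdot)$ terms into $o(1)$. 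The case where some $d'_{x,x'} = \infty$ is handled automatically by the same framework, since $s(d) \to -\infty$ as $d \to \infty$ forces the residual into the strongly negative regime, making the inequality trivial.
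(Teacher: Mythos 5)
Your reduction has two genuine gaps, one in each case. First, in the case where every ratio $\mn(x,x')/n$ converges to $\deg_{x,x'}(\mu)$, the pair $(\vmn,\vun)$ need \emph{not} be adapted to $(\vdeg(\mu),\vvtype(\mu))$: Definition~\ref{def:deg-seq-adapt} also requires the exact-zero conditions ($d_{x,x'}=0$ forces $\mn(x,x')=0$ for all $n$, and $q_\theta=0$ forces $\un(\theta)=0$), while the hypotheses of the lemma allow, say, $\mn(x,x')=\lfloor\sqrt{n}\rfloor$ with $\deg_{x,x'}(\mu)=0$. Handling precisely this situation is what the paper's Case~1 does: it builds adapted sequences $(\vtmn,\vtun)$ by zeroing those coordinates, maps each $G\in\mGnmnun(\mu,\epsilon_n)$ to a graph $\tG$ with the forbidden edge types deleted and the forbidden vertex marks recoloured, and uses unimodularity through Lemma~\ref{lem:everything-root} to show that this surgery affects only $o(n)$ vertices and edges, so that $\dlp(U(G),U(\tG))$ stays small and the fiber cost is negligible. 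Your proposal never invokes unimodularity anywhere, which is a symptom of this missing step.

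Second, in the excess case your surgery-and-absorption plan fails as stated. When $\mn(x,x')-\tmn(x,x')=\Theta(n)$, deleting these edges changes the depth-$h$ neighbourhoods of $\Theta(n)$ endpoint vertices, so Lemma~\ref{lem:local-isomorphism-LP-distance} with the identity permutation gives no decay of $\dlp(U(G),U(\tG))$, and the trimmed graphs need not lie in any $\mGn_{\vtmn,\vun}(\mu,\epsilon'_n)$ with $\epsilon'_n\to 0$. Moreover, the claim that the excess edges are forced to meet the set $B_n$ of vertices of degree exceeding a slowly growing $\Delta_n$ is unjustified: the constraint $\dlp(U(G),\mu)<\epsilon_n$ controls the empirical measure only through fixed-radius, fixed-threshold events, so it does not prevent the escaping degree mass from sitting on vertices of degree between a fixed constant and $\Delta_n$; making this rigorous needs a double limit (first $n\to\infty$, then the threshold $\to\infty$), and your key counting estimate (``precisely enough to absorb the $s(\cdot)$ terms'') is only asserted, not proved. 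The paper avoids this entirely: in its Case~2 it drops the marks via $\mathsf{UM}$ (Lemma~\ref{lem:Gnmn-Tx}), pays only a bounded per-vertex factor for the markings, and then invokes part~3 of Theorem~\ref{thm:badcases} (equivalently the unmarked Bordenave--Caputo result) to conclude that a mismatched average degree makes the normalized count tend to $-\infty$, with the case $\bar d=\infty$ handled separately via $s(y)\to-\infty$. I recommend restructuring your argument along those lines rather than trying to repair the absorption step.
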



\begin{proof}[Proof of Proposition~\ref{prop:universal-opt-bounded-degree}]
Motivated by our compression scheme, 
we have
  \begin{equation}
\begin{split}
    \label{eq:bounded-degree--lfnGn-upper-bound-1}
    \limsup_{n \rightarrow \infty} \frac{\nat(\tilde{f}_n(\Gn)) - \snorm{\vmn}_1 \log n}{n} &\le \limsup_{n \rightarrow \infty}  \frac{|\mA_{k_n, \Delta_n}| (\log 2 + \log n)}{n} \\
    &\qquad + \frac{\log 2+ \log |W_n| - \snorm{\vmn}_1 \log n}{n} \\
&= \limsup_{n \rightarrow \infty} \frac{\log |W_n| - \snorm{\vmn}_1 \log n}{n},
\end{split}
  \end{equation}
where the last equality employs the assumption $|\mA_{k_n, \Delta_n}| = o(n / \log n)$. 
We now show that
\begin{equation}
\label{eq:Wn-subset-G-epsilon-n-2kn}
  W_n \subseteq \mG^{(n)}_{\vmn,\vun}\left (\mu, \epsilon_n + \frac{1}{1+k_n}\right),
\end{equation}
where $\epsilon_n := \dlp(U(\Gn), \mu)$ and $\vun := \vu_{\Gn}$.
For this, let
$G \in W_n$. By
definition, for all $[G', o'] \in \mA_{k_n, \Delta_n}$, we have $|\psi^{(n)}_{\Gn}([G',
o'])| = |\psi^{(n)}_{G}([G', o'])|$. Hence there exists a permutation $\pi$
on the set of vertices $\{1, \dots, n\}$ such that $(\Gn, i)_{k_n} \equiv (G,
\pi(i))_{k_n}$ for all $1 \leq i \leq n$. Using
Lemma~\ref{lem:local-isomorphism-LP-distance} above with  
$h = k_n$ and $L = n$, 
we have 
\begin{equation*}
  \dlp(U(\Gn), U(G)) \leq \frac{1}{1+k_n}.
\end{equation*}
Consequently,
\begin{equation}
  \label{eq:dlp-UG-mu-epsilon-n-1-kn}
\dlp(U(G), \mu) < \epsilon_n + 1/(1+k_n).
\end{equation}

We claim that for $G \in W_n$ we have $\vm_{G} = \vmn$ and $\vu_G = \vun$.
To see this, note that for $\theta \in \vermark$ we have
\begin{align*}
  u_G(\theta) &= \sum_{i=1}^n \one{\tau_{G}(i) = \theta}\ = \sum_{[G', o'] \in \mA_{k_n, \Delta_n}} 
  \sum_{i \in \psi^{(n)}_{G}([G', o'])} \one{\tau_{G}(i) = \theta}.
\end{align*}
Note that for $i \in \psi^{(n)}_{G}([G', o'])$ we have $\tau_{G}(i) =
\tau_{G'}(o')$. Therefore, 
\begin{align*}
u_G(\theta) &= \sum_{[G', o'] \in \mA_{k_n, \Delta_n}: \tau_{G'}(o') = \theta}  |\psi^{(n)}_G([G',o'])|.
\end{align*}
A similar argument implies 
\begin{equation*}
  \un(\theta) = \sum_{[G', o'] \in \mA_{k_n, \Delta_n}: \tau_{G'}(o') = \theta}  |\psi^{(n)}_{\Gn}([G',o'])|.
\end{equation*}
Hence $\un(\theta) = u_G(\theta)$. Likewise, for $G \in W_n$ and $x \neq x' \in \edgemark$, we
can write,
for $n$ large enough,
\begin{align*}
  m_G(x,x') = \sum_{i=1}^n \deg_G^{x,x'}(i) &= \sum_{[G',o'] \in \mA_{k_n, \Delta_n}} \deg_{G'}^{x,x'}(o') |\psi^{(n)}_{G}([G',o'])| \\
            &= \sum_{[G',o'] \in \mA_{k_n, \Delta_n}} \deg_{G'}^{x,x'}(o') |\psi^{(n)}_{\Gn}([G',o'])| \\
            &= \mn(x,x').
\end{align*}
The proof of $m_G(x,x) = \mn(x,x)$ for $x \in \edgemark$ is similar. 
This, together with \eqref{eq:dlp-UG-mu-epsilon-n-1-kn}, implies that 
$G \in \mG^{(n)}_{\vmn,\vun}(\mu, \epsilon_n + 1/(1+k_n))$ which completes the proof of~\eqref{eq:Wn-subset-G-epsilon-n-2kn}.

Note that, for fixed $t > 0$ and $x, x' \in \edgemark$, the mapping $[G,o]
\mapsto \deg_G^{x,x'}(o) \wedge t$ is bounded and continuous. Therefore, for $x \neq x' \in
\edgemark$, we have 
\begin{align*}
  \frac{\mn(x,x')}{n} &= \int \deg_G^{x,x'}(o) d U(\Gn)([G,o]) \geq  \int (\deg_G^{x,x'}(o)\wedge t) d U(\Gn)([G,o]) \\
  &\xrightarrow{n \rightarrow \infty} \int (\deg_G^{x,x'}(o)\wedge t) d \mu.
\end{align*}
Sending $t$ to infinity, we get 
\begin{equation*}
  \liminf_{n \rightarrow \infty} \frac{\mn(x,x')}{n} \geq \deg_{x,x'}(\mu).
\end{equation*}
Similarly, for $x \in \edgemark$, we have $\liminf_{n \rightarrow \infty}
m_n(x,x) / n \geq \deg_{x,x}(\mu)/2$.
On the other hand, for $\theta \in \vermark$, the mapping $[G,o] \mapsto
\one{\tau_G(o) = \theta}$ is bounded and continuous. This implies that
\begin{equation*}
  \lim_{n \rightarrow \infty} \un(\theta) = \vtype_\theta(\mu).
\end{equation*}
Thus, substituting \eqref{eq:Wn-subset-G-epsilon-n-2kn} into \eqref{eq:bounded-degree--lfnGn-upper-bound-1}, using the fact that $\epsilon_n + 1/(1+k_n) \rightarrow 0$, and using Lemma~\ref{lem:log-Gnmn-smaller-entropy} above, we get 
\begin{align*}
  \limsup_{n \rightarrow \infty} \frac{\nat(\tilde{f}_n(\Gn)) - \snorm{\vmn}_1 \log n}{n} &\leq     \limsup_{n \rightarrow \infty} \frac{\log \left | \mG^{(n)}_{\vmn,\vun}\left (\mu, \epsilon_n + \frac{1}{1+k_n}\right) \right | - \snorm{\vmn}_1 \log n}{n} \\
  &\leq \bch(\mu),
\end{align*}
which completes the proof.
\end{proof}

\subsection{Step 2: The General Compression Scheme}
\label{sec:generel-scheme}

In Section~\ref{sec:coding-step-1-restriction-max-degree} we introduced a
compression scheme which achieves the BC entropy of $\mu$ by focusing on the depth $k_n$ empirical distribution of\
the graph $\Gn$ in the sequence of graphs $\{\Gn\}_{n=1}^\infty$, under the assumption that the
maximum degree of 
$\Gn$ is bounded above by $\Delta_n$ which does not grow too fast, in the sense that
$|\mA_{k_n, \Delta_n}| = o(n / \log n)$.
In principle, we can choose the
design parameter $k_n$, but we have no control over the maximum degree
$\Delta_n$. In order to overcome this and drop the restriction on the compression scheme in
Section~\ref{sec:coding-step-1-restriction-max-degree}, we first choose $k_n$
and $\Delta_n$ and then trim the input graph by removing some edges to make its
maximum degree no more than $\Delta_n$. Then, we encode the resulting trimmed
graph by the compression function in Section~\ref{sec:coding-step-1-restriction-max-degree}.
Finally, we encode the removed edges  separately. 
More precisely, we encode a graph $\Gn \in \mGb_n$ as follows:
\begin{enumerate}
\item Define $\Delta_n :=  \log \log n$.
\item Let $\tGn := {(\Gn)}^{\Delta_n}$ be the trimmed graph obtained by removing the
  edges connected to vertices with degree more than $\Delta_n$. Moreover, define
  \begin{equation*}
    R_n := \{ 1 \leq i \leq n : \deg_{\Gn}(i) > \Delta_n \text{ or } \deg_{\Gn}(j) > \Delta_n \text{ for some } j \sim_{\Gn} i \},
  \end{equation*}
which essentially consists of the endpoints of the removed edges. 
\item Encode the graph $\tGn$ by the compression function introduced in Section~\ref{sec:coding-step-1-restriction-max-degree}, with $k_n = \sqrt{\log \log n}$.
\item Encode $|R_n|$ using 
at most $(1+\lfloor  \log_2 n \rfloor) \log 2$ nats.
\item Encode the set $R_n$ using 
at most $(1+\lfloor  \log_2 \binom{n}{|R_n|} \rfloor) \log 2$ nats.
\item Let $\vmn = \vm_{\Gn}$ and $\vtmn = \vm_{\tGn}$. Note that the edges present in $\Gn$ but not
  in $\tGn$ have both endpoints in the set $R_n$. So we can first encode
  $\mn(x, x') - \tmn(x,x')$ for all $x,x' \in \edgemark$ 
  by $|\edgemark|^2 (1+\lfloor \log_2n^2 \rfloor) \log 2 \leq 2 |\edgemark|^2
  (1+\lfloor  \log_2 n \rfloor) \log 2$ nats
  and then encode these removed  edges 
  using
    \begin{equation*}
   \sum_{x\leq x' \in \edgemark} \left(1+\lfloor  \log_2 \binom{\binom{|R_n|}{2}}{\mn(x,x') - \tmn(x,x')} \rfloor\right) \log 2
  \end{equation*}
  nats by specifying the removed edges of each pair of  marks separately.
\end{enumerate}

Now we show that this general compression scheme achieves the upper BC entropy rate, as was stated in Theorem~\ref{thm:universal-existence-main}. Before this, we need the results of the following lemmas. We postpone the proofs of these lemmas to Appendix~\ref{sec:proofs-univ-coding-algorithm}.

\begin{lem}
  \label{lem:Gn-lwc-trim}
  Assume that $\{\Gn\}_{n=1}^\infty$ is a sequence of marked graphs with local weak limit $\mu \in \mP(\mTb_*)$, where $\Gn$ is on the vertex set $\{1, \dots, n\}$.
  If $\Delta_n$ is a sequence of integers going to infinity as $n\rightarrow \infty$, $\mu$ is also the local weak limit of the trimmed sequence $\{{(\Gn)}^{\Delta_n}\}_{n=1}^\infty$.
\end{lem}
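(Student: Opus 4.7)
The plan is to apply Lemma~\ref{lem:eq-condition-local-weak-convergence}. Since $\mu$ is supported on $\mTb_*$, the weak convergence $U(\tGn) \Rightarrow \mu$ is equivalent to showing that for every $h \ge 0$ and every rooted marked tree $(T, i)$ of depth at most $h$, one has $U(\tGn)(A^h_{(T, i)}) \to \mu(A^h_{(T, i)})$. Since by hypothesis $U(\Gn)(A^h_{(T, i)}) \to \mu(A^h_{(T, i)})$ for each such $(T, i)$, it suffices to prove the stability claim $|U(\tGn)(A^h_{(T, i)}) - U(\Gn)(A^h_{(T, i)})| \to 0$.

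For integers $h \ge 0$ and $M \ge 1$, I would introduce the set
\[
B^{h, M} := \{[G, o] \in \mGb_* : \deg_G(u) \le M \text{ for every } u \in V(G) \text{ with } \dist_G(o, u) \le h\}.
\]
Membership in $B^{h, M}$ is determined entirely by $[G, o]_{h+1}$, so $B^{h, M}$ is simultaneously open and closed in the metric $\bar{d}_*$, and its indicator is therefore a bounded continuous function on $\mGb_*$. Weak convergence then yields $U(\Gn)(B^{h, M}) \to \mu(B^{h, M})$. Since $\mu$ is supported on locally finite (in fact tree-like) rooted marked graphs, the sets $B^{h, M}$ increase with $M$ to a $\mu$-conull set, so $\mu\bigl((B^{h, M})^c\bigr) \to 0$ as $M \to \infty$.

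The key graph-theoretic observation is that if a vertex $v$ of $\Gn$ satisfies $[\Gn, v] \in B^{h, M}$ and $\Delta_n \ge M$, then $(\tGn, v)_h \equiv (\Gn, v)_h$: every edge inside the depth-$h$ ball of $v$ in $\Gn$ has both endpoints of degree at most $M \le \Delta_n$ and hence survives trimming, while trimming only deletes edges and so can only increase distances, preventing new vertices from entering the ball. Consequently, once $n$ is large enough that $\Delta_n \ge M$, the number of $v$ with $[\tGn, v]_h = [T, i]$ and the number of $v$ with $[\Gn, v]_h = [T, i]$ can differ only at vertices with $[\Gn, v] \notin B^{h, M}$. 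Dividing by $n$ gives
\[
\limsup_{n \to \infty} \bigl|U(\tGn)(A^h_{(T, i)}) - U(\Gn)(A^h_{(T, i)})\bigr| \le \limsup_{n \to \infty} U(\Gn)\bigl((B^{h, M})^c\bigr) = \mu\bigl((B^{h, M})^c\bigr),
\]
and letting $M \to \infty$ kills the right-hand side. I expect the only delicate point to be the elementary verification that trimming really leaves the depth-$h$ ball of a ``good'' vertex intact; everything else is a standard truncation-plus-weak-convergence argument.
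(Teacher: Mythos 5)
Your proposal is correct and follows essentially the same route as the paper's proof: reduce via Lemma~\ref{lem:eq-condition-local-weak-convergence} to convergence of the probabilities of the sets $A^h_{(T,i)}$, observe that the depth-$h$ ball at a vertex is unchanged by trimming unless some nearby vertex has degree exceeding $\Delta_n$, and control the fraction of such vertices using weak convergence together with local finiteness of $\mu$. The only differences are cosmetic: you require the degree bound within distance $h$ of the root (which indeed suffices, since every edge and every shortest path of the depth-$h$ ball lies among such vertices) whereas the paper uses distance $k+1$, and you exploit that your ``good'' set is clopen to get exact convergence of its $U(\Gn)$-measure, whereas the paper applies the portmanteau bound for the closed ``bad'' set.
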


\begin{lem}
  \label{lem:A-k-delta-o(n/logn)}
  If $\Delta_n \leq \log \log n$ and $k_n \leq \sqrt{\log \log n}$, then $|\mA_{k_n, \Delta_n}| = o( n / \log n)$. 
\end{lem}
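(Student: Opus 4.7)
The plan is to give a crude upper bound on $|\mA_{k,\Delta}|$ in terms of $k$ and $\Delta$ by first bounding the number of vertices that can appear in a rooted marked graph of depth at most $k$ and maximum degree at most $\Delta$, then bounding the number of (root-labeled) marked graphs on that many vertices. Substituting the given growth rates for $k_n$ and $\Delta_n$ will then give the result by straightforward asymptotics.

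In more detail, I would first observe that any representative $(G,o)$ of a class in $\mA_{k,\Delta}$ is supported on the $k$--neighborhood of $o$, which, by a branching-process style estimate, contains at most
\[
N_{k,\Delta} := 1 + \Delta + \Delta(\Delta-1) + \dots + \Delta(\Delta-1)^{k-1} \le 2\Delta^{k+1}
\]
vertices. Next, any rooted isomorphism class in $\mA_{k,\Delta}$ can be represented by choosing a labeling of its vertex set by $\{1,\dots,N_{k,\Delta}\}$ with the root labeled $1$. The number of such labeled marked graphs is at most
\[
2^{\binom{N_{k,\Delta}}{2}} \cdot |\vermark|^{N_{k,\Delta}} \cdot |\edgemark|^{N_{k,\Delta}(N_{k,\Delta}-1)} \le C^{N_{k,\Delta}^2},
\]
for a constant $C$ depending only on $|\vermark|$ and $|\edgemark|$. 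Therefore
\[
\log|\mA_{k,\Delta}| \le N_{k,\Delta}^2 \log C \le (2\log C)\,\Delta^{2(k+1)}.
\]

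Now I would substitute $k_n = \sqrt{\log\log n}$ and $\Delta_n = \log\log n$. Writing $y = \log\log n$, the upper bound becomes
\[
\log|\mA_{k_n,\Delta_n}| \le (2\log C)\,y^{2\sqrt{y}+2} = \exp\!\bigl((2\sqrt{y}+2)\log y + O(1)\bigr).
\]
Since $(2\sqrt{y}+2)\log y = o(y)$ as $y \to \infty$, this gives $\log|\mA_{k_n,\Delta_n}| = e^{o(y)} = (\log n)^{o(1)} = o(\log n)$. Exponentiating yields $|\mA_{k_n,\Delta_n}| = n^{o(1)}$, which is indeed $o(n/\log n)$ because $n/\log n = n^{1-o(1)}$.

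There is no real obstacle here; the whole argument is a string of elementary counting and asymptotic estimates. The only mildly delicate point is making sure the very crude bound $C^{N^2}$ for the number of marked graphs on $N$ vertices remains $o(\log n)$ after taking the logarithm, which is why the polynomial growth of $\Delta_n$ is replaced by $\log\log n$ and $k_n$ by $\sqrt{\log\log n}$: these choices are exactly what force $\Delta_n^{2(k_n+1)} = o(\log n)$.
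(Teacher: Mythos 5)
Your argument is correct and is essentially the paper's own proof: both bound the number of vertices in any class of $\mA_{k,\Delta}$ by roughly $\Delta^{k+1}$, bound the possible edge/mark configurations on that many vertices to get $\log|\mA_{k_n,\Delta_n}| = O\bigl(\Delta_n^{2(k_n+1)}\bigr)$, and then, with $y=\log\log n$, use $k_n\log\Delta_n \le \sqrt{y}\,\log y = o(y)$ to conclude this bound is $o(\log n)$, hence $|\mA_{k_n,\Delta_n}| = n^{o(1)} = o(n/\log n)$. The only nit is a harmless constant: since $N_{k,\Delta}\le 2\Delta^{k+1}$ gives $N_{k,\Delta}^2\le 4\Delta^{2(k+1)}$, the prefactor should be $4\log C$ rather than $2\log C$, which affects nothing.
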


\begin{lem}
  \label{lem:tn-infty-An-o(n)}
  Assume that $\{ \Gn \}_{n=1}^\infty$ is a sequence of marked graphs with local weak limit $\mu \in \mP(\mTb_*)$, where $\Gn$ is on the vertex set $\{1, \dots, n\}$.
Let $\{\Delta_n\}_{n=1}^\infty$ be a sequence of integers such that $\Delta_n \rightarrow \infty$ and define
  \begin{equation*}
    R_n := \{ 1 \leq i \leq n: \deg_{\Gn}(i) > \Delta_n \text{ or } \deg_{\Gn}(j) > \Delta_n \text{ for some } j \sim_{\Gn} i \}.
  \end{equation*}
Then $|R_n| / n \rightarrow 0$ as $n$ goes to infinity.
\end{lem}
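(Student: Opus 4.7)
The plan is to decompose $R_n = R_n^a \cup R_n^b$, where
$R_n^a := \{i : \deg_{\Gn}(i) > \Delta_n\}$ and
$R_n^b := \{i : \exists\, j \sim_{\Gn} i \text{ with } \deg_{\Gn}(j) > \Delta_n\}$, and to show that each of $|R_n^a|/n$ and $|R_n^b|/n$ vanishes; the bound $|R_n| \leq |R_n^a| + |R_n^b|$ then finishes the argument.

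The key observation is that, for each fixed integer $\Delta \geq 1$, the two indicators
\begin{equation*}
g_\Delta([G,o]) := \mathbbm{1}[\deg_G(o) > \Delta], \qquad f_\Delta([G,o]) := \mathbbm{1}[\exists\, v \sim_G o \text{ with } \deg_G(v) > \Delta]
\end{equation*}
are bounded and continuous on $(\mGb_*, \bar{d}_*)$. Indeed, $g_\Delta$ depends only on $[G,o]_1$ and $f_\Delta$ only on $[G,o]_2$, and whenever $[G_m,o_m] \to [G,o]$ in $\bar{d}_*$ we have $[G_m,o_m]_h \equiv [G,o]_h$ for every fixed $h$ once $m$ is large, so the values of $g_\Delta$ and $f_\Delta$ must agree for large $m$. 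By construction, $|R_n^a|/n = \int g_{\Delta_n}\, dU(\Gn)$ and $|R_n^b|/n = \int f_{\Delta_n}\, dU(\Gn)$.

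By local weak convergence $U(\Gn) \Rightarrow \mu$, together with the boundedness and continuity of $g_\Delta$ and $f_\Delta$, for each fixed $\Delta$ we have $\int g_\Delta\, dU(\Gn) \to \int g_\Delta\, d\mu$ and $\int f_\Delta\, dU(\Gn) \to \int f_\Delta\, d\mu$. Because $\mu$ is supported on equivalence classes of locally finite rooted marked graphs, $\deg_G(o)$ and the degrees of the neighbors of $o$ are $\mu$-a.s.\ finite, so $g_\Delta \downarrow 0$ and $f_\Delta \downarrow 0$ pointwise as $\Delta \to \infty$; dominated convergence (with dominator $1$) then gives $\int g_\Delta\, d\mu \to 0$ and $\int f_\Delta\, d\mu \to 0$.

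To combine these facts despite the $n$-dependence of $\Delta_n$, I will run a standard diagonal argument: given $\epsilon > 0$, fix $\Delta$ large enough that $\int g_\Delta\, d\mu$ and $\int f_\Delta\, d\mu$ both lie below $\epsilon/2$. Since $\Delta_n \to \infty$, we have $\Delta_n \geq \Delta$ for all $n$ large enough, and hence $g_{\Delta_n} \leq g_\Delta$ and $f_{\Delta_n} \leq f_\Delta$ pointwise, so $\int g_{\Delta_n}\, dU(\Gn) \leq \int g_\Delta\, dU(\Gn) < \epsilon$ for $n$ large, and likewise for $f$. Since $\epsilon$ was arbitrary, both fractions tend to $0$. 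There is no real obstacle in this argument; the only step worth flagging is the continuity of the depth-truncated indicators, which falls out directly from the definition of $\bar{d}_*$, and crucially no moment hypothesis on $\mu$ is required because the relevant integrands are uniformly bounded by $1$.
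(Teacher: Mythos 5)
Your proof is correct and follows essentially the same route as the paper: the paper works with the single clopen set $B_\Delta = \{[G,o] : \deg_G(o) \le \Delta \text{ and } \deg_G(i) \le \Delta \ \forall i \sim_G o\}$, whose complement's indicator is exactly $\max(g_\Delta, f_\Delta)$, writes $|R_n|/n = U(\Gn)(B_{\Delta_n}^c) \le U(\Gn)(B_\Delta^c) \to \mu(B_\Delta^c)$ for each fixed $\Delta$, and lets $\Delta \to \infty$ using local finiteness. Your splitting into $R_n^a$ and $R_n^b$ with a union bound is only a cosmetic variation of this argument.
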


\begin{proof}[Proof of Theorem~\ref{thm:universal-existence-main}]
  Let $\tilde{f}_n$ be the compression function of the scheme in
  Section~\ref{sec:coding-step-1-restriction-max-degree}. 
    We have 
  \begin{align*}
    \nat(f_n(\Gn)) &\le \nat(\tilde{f}_n(\tGn)) + \log n + \log \binom{n}{|R_n|} + 2 |\edgemark|^2 \log n \\
    &\quad +  \sum_{x \leq x' \in \edgemark} \log \binom{\binom{|R_n|}{2}}{\mn(x, x')- \tmn(x,x')} + C \log 2,
\end{align*}
where $C = 2 + 3 |\edgemark|^2$. 
Using the inequality $\binom{r}{s} \leq (re/s)^s$ and
Lemma~\ref{lem:upper-bound-on-G_n-m_mlogn-n} above,  we  have
\begin{align*}
  \nat(f_n(\Gn)) &\leq \nat(\tilde{f}_n(\tGn)) + (1+2 |\edgemark|^2) \log n + |R_n| \log \frac{ne}{|R_n|} \\
  & \qquad + (\snorm{\vmn}_1 - \snorm{\vtmn}_1) \log |R_n| + \frac{|R_n||\edgemark|^2}{2} + C \log 2.
\end{align*}
Using the fact that $|R_n| \leq n$, this gives
\begin{align*}
  \nat(f_n(\Gn)) &\leq \nat(\tilde{f}_n(\tGn)) + (1+2 |\edgemark|^2)\log n + |R_n| \log \frac{ne}{|R_n|} \\
  & \qquad + (\snorm{\vmn}_1 - \snorm{\vtmn}_1) \log n + \frac{|R_n||\edgemark|^2}{2} + C \log 2.
  \end{align*}
Hence,
\begin{equation}
  \label{eq:lfnGn--lftildenGtilden}
\begin{split}
  \limsup_{n \rightarrow \infty} \frac{\nat(f_n(\Gn) - \snorm{\vmn}_1 \log n}{n} &\leq \limsup_{n \rightarrow \infty} \frac{\nat(\tilde{f}_n(\tGn) - \snorm{\vtmn}_1 \log n}{n} \\
&\qquad  +\limsup_{n \rightarrow \infty} \frac{|R_n||\edgemark|^2}{2n} + \limsup_{n \rightarrow \infty} \frac{|R_n|}{n} \log \frac{ne}{|R_n|}.
\end{split}
\end{equation}
Now, we claim that the conditions of
Proposition~\ref{prop:universal-opt-bounded-degree} hold for the sequence $\tGn$
and the parameters $k_n$ and $\Delta_n$ defined above. 
To show this, note that both $k_n$ and $\Delta_n$ go to infinity by definition. 
Lemma~\ref{lem:Gn-lwc-trim}  then implies that $\mu$ is also the local weak limit of the sequence $\tGn$. Moreover, by Lemma~\ref{lem:A-k-delta-o(n/logn)}, $|\mA_{k_n, \Delta_n}| = o(n / \log n)$. On the other hand, the maximum degree in $\tGn$ is at most $\Delta_n$. Therefore, all the conditions of Proposition~\ref{prop:universal-opt-bounded-degree} are satisfied and 
\begin{equation*}
  \limsup_{n \rightarrow \infty} \frac{\nat(\tilde{f}_n(\tGn)) - \snorm{\vtmn}_1 \log n}{n} \leq \bch(\mu).
\end{equation*}

Furthermore, all the other terms in \eqref{eq:lfnGn--lftildenGtilden} go to zero,  since, by Lemma~\ref{lem:tn-infty-An-o(n)}, $|R_n| / n \rightarrow 0$, and the function $\delta \mapsto \delta \log \delta$ goes to zero as $\delta\rightarrow 0$. Therefore,
\begin{equation*}
    \limsup_{n \rightarrow \infty} \frac{\nat(f_n(\Gn)) - \snorm{\vmn}_1 \log n}{n} \leq \limsup_{n \rightarrow \infty} \frac{\nat(\tilde{f}_n(\tGn) - \snorm{\vtmn}_1 \log n}{n}  \leq \bch(\mu),
\end{equation*}
which completes the proof.
\end{proof}

\begin{rem}
From Lemma~\ref{lem:tn-infty-An-o(n)} above, for typical graphs, $|R_n| = o(n)$. Hence, similar to our discussion in Remark~\ref{rem:first-step-local-queries}, we are capable of answering local queries with an error of $o(n)$ without needing to go through the decompression process.
\end{rem}


\section{Conclusion}
\label{sec:conclusion}

We introduced a notion of  stochastic process for graphs, using the language of
local weak convergence. Besides, we discussed a generalized notion of entropy
for such processes. Using this, we formalized the problem of efficiently
compressing graphical data without assuming  prior knowledge of its stochastic
properties. Finally, we proposed a universal compression scheme which 
is asymptotically optimal in the size of the underlying graph, as characterized using 
the discussed notion of entropy,  and is capable of performing local data queries in the compressed form, 
with an error negligible compared to the number of vertices.

\begin{flushleft}
{\bf ACKNOWLEDGMENTS}
\end{flushleft}

Research supported by the NSF Science and Technology Center grant CCF-
0939370: ``Science of Information", the NSF grants ECCS-1343398, CNS-1527846
and CIF-1618145, and by the William and Flora Hewlett Foundation supported Center for Long Term Cybersecurity at Berkeley.


\appendix

\section{Proof of Lemma~\ref{lem:eq-condition-local-weak-convergence}}
\label{sec:proof-equivalence-condition-lwc}

  We first prove that if the condition mentioned in
  Lemma~\ref{lem:eq-condition-local-weak-convergence} is satisfied, then $\mu_n
  \Rightarrow  \mu$. Let $f: \mGb_* \rightarrow \reals$ be a uniformly
  continuous  and bounded function.
Since $f$ is uniformly continuous, for fixed $\epsilon>0$ there exists $\delta
> 0$ such that $|f([G,o]) - f([G',o'])| < \epsilon$ for all $[G, o]$ and
$[G', o']$ such that  $\bar{d}_*([G,o], [G',o'])< \delta$. For this
$\delta$, choose $k$  such that $1/(1+k) < \delta$. 
Note that since $\edgemark$ and $\vermark$ are finite there are countably many locally finite rooted trees with marks in $\edgemark$ and $\vermark$ and depth at most $k$.
Therefore, one can find countably many rooted trees $\{ (T_j, i_j)\}_{j = 1}^\infty$ with depth at most $k$ such that $A^k_{(T_j,i_j)} \cap \mTb_*$ partitions $\mTb_*$.
On the other hand, as $\mu$ is a probability measure with its support being a
subset of $\mTb_*$, one can find finitely many of these $(T_j,i_j)$, which we may index without loss of generality by $1 \leq j \leq m$,
such that $\sum_{j=1}^m \mu(A^k_{(T_j,i_j)}) \geq 1 -  \epsilon$. 
To simplify the notation, we use $A_j$ for $A^k_{(T_j, i_j)}$, $1 \leq j \leq m$.
Note that if $[G,o] \in A_j$, $\bar{d}_*([G,o], [T_j,i_j]) \leq \frac{1}{1+k} < \delta$. Hence, if $\mA$ denotes $\cup_{j=1}^m A_j$, we have 
\begin{equation*}
  \begin{split}
    \left | \int f d \mu - \sum_{j=1}^m f([T_j,i_j]) \mu (A_j) \right | & \leq \sum_{j=1}^m \left | \int_{A_j} f d \mu - f([T_j,i_j]) \mu(A_j)\right| + \norm{f}_\infty \mu(\mA^c) \\
    &\leq \epsilon( 1 + \norm{f}_\infty),
  \end{split}
\end{equation*}
where the last inequality uses the facts that $\mu(\mA^c) \le \epsilon$, $1 / (1+k) < \delta$, and $|f([G,o]) - f([T_j,i_j])| < \epsilon$ for $[G, o] \in A_j$. Similarly, we have 
\begin{equation*}
\begin{split}
    \left | \int f d \mu_n - \sum_{j=1}^m f([T_j,i_j]) \mu(A_j) \right | &\leq \left | \int f d \mu_n - \sum_{j=1}^m f([T_j,i_j]) \mu_n(A_j) \right | + \sum_{j=1}^m |f([T_j,i_j])| | \mu_n(A_j) - \mu(A_j) | \\
    &\leq \norm{f}_\infty \left ( 1 - \sum_{j=1}^m \mu_n(A_j) \right ) + \epsilon +\norm{f}_\infty\sum_{j=1}^m | \mu_n(A_j) - \mu(A_j)|.
\end{split}
\end{equation*}
Combining the two preceding inequalities, we have 
\begin{equation*}
  \left | \int f d \mu_n - \int f d \mu \right | \leq \norm{f}_\infty \left ( 1 - \sum_{j=1}^m \mu_n(A_j) \right )  + \norm{f}_\infty  \sum_{j=1}^m |\mu_n(A_j) - \mu(A_j) | + \epsilon(2 + \norm{f}_\infty).
\end{equation*}
Now, as $n$ goes to infinity, $\mu_n(A_j) \rightarrow \mu(A_j)$ by assumption and also $1 - \sum_{j=1}^m \mu_n(A_j)  \rightarrow \mu(\mA^c) \le \epsilon$. Thus, 
\begin{equation*}
  \limsup_{n \rightarrow \infty} \left | \int f d\mu_n - \int f d \mu \right | \leq 2\epsilon(1 + \norm{f}_\infty).
\end{equation*}
Since $\norm{f}_\infty < \infty$ and $\epsilon > 0$ is arbitrary, $\int f d \mu_n \rightarrow \int f d \mu$, whereby $\mu_n \Rightarrow \mu$.

For the converse, 
fix an integer $h \ge 0$ 
and a rooted marked tree $(T,i)$ with
depth at most $h$.   
Since $\oneu{A^h_{(T,i)}}([G, o]) = \oneu{A^h_{(T,i)}}([G',o'])$ when $\bar{d}_*([G,o], [G',o']) < 1 / (1+h)$, we see that $\oneu{A^h_{(T,i)}}$ is a bounded continuous function. This immediately implies that
\begin{equation*}
  \mu_n(A^h_{(T, i)}) = \int \oneu{A^h_{(T, i)}} d \mu_n \rightarrow \int \oneu{A^h_{(T, i)}} d \mu =   \mu(A^h_{(T, i)}),
\end{equation*}
which completes the proof. 


\section{Proofs for Section~\ref{sec:univ-coding-algor}}
\label{sec:proofs-univ-coding-algorithm}

\begin{proof}[Proof of Lemma~\ref{lem:local-isomorphism-LP-distance}]
  Let $\mA$ be the set of $1 \leq i \leq n$ such that 
  $[G, i]_h = [G', \pi(i)]_h$.
  Then, for any Borel set $B \subset \mGb_*$,  we have 
  \begin{align*}
    U(G)(B) &= \frac{1}{n} \sum_{i=1}^n \one{[G, i] \in B} \\
            &\leq \frac{1}{n} \sum_{i \in \mA} \one{[G, i] \in B} + 1 - \frac{L}{n}.
  \end{align*}
Note that if for some $i \in \mA$ we have $[G, i] \in B$ then,  since 
$(G, i)_h  \equiv (G', \pi(i))_h$, 
we have $d_*([G, i], [G', \pi(i)]) \leq \frac{1}{1+h}$.  This means
that, for such $i$, $[G', \pi(i)] \in B^{\delta+1/(1+h)}$ for arbitrary $\delta> 0$. Continuing
the chain of inequalities, we have
\begin{align*}
  U(G)(B) &\leq \frac{1}{n} \sum_{i\in \mA} \one{[G', \pi(i)] \in B^{\delta+1/(1+h)}} + 1 -\frac{L}{n} \\
    & \leq \frac{1}{n} \sum_{i=1}^n \one{[G', i] \in B^{\delta+1/(1+h)} } + 1 - \frac{L}{n}\\
    & = U(G')(B^{\delta+1/(1+h)}) + 1 - \frac{L}{n}.
  \end{align*}
Changing the order of $G$ and $G'$, we have 
\begin{equation*}
  \dlp(U(G), U(G')) \leq \max \left \{ \frac{1}{1+h} + \delta, 1 - \frac{L}{n} \right \}.
\end{equation*}
We get the desired result by sending $\delta$ to zero.
\end{proof}

\begin{proof}[Proof of Lemma~\ref{lem:upper-bound-on-G_n-m_mlogn-n}]
  Using the classical upper bound $\binom{r}{s} \leq (re/s)^s$, we have 
  \begin{equation*}
    \log \left | \binom{\binom{n}{2}}{m} \right | \leq m \log \frac{n^2 e}{2m} = m \log n + m \log \frac{ne}{2m} = m \log n + n s(2m/n),
  \end{equation*}
which completes the proof of the first statement. Also, it is easy to see that $s(x)$ is increasing for $x<1$, decreasing for $x> 1$ and attains its maximum value $1/2$ at $x=1$. Therefore, $s(x) \leq 1/2$. This completes the proof of the second statement. 
\end{proof}

Next, we prove Lemma~\ref{lem:log-Gnmn-smaller-entropy}.  Before that, we state
and  prove
the following lemmas which will be useful in our proof. 
For a marked graph $G$ on a finite or countably infinite vertex set, let
$\mathsf{UM}(G)$ denote the unmarked graph which has the same set of vertices and
edges as in $G$, but is obtained from $G$ by removing all the vertex and edge
marks. Given a probability distribution $\mu \in \mP(\mG_*)$  on the space of
isomorphism classes of rooted unmarked graphs, for $\epsilon>0$ and integers $n$ and $m$, let
$\mG_{n,m}(\mu, \epsilon)$ denote the set of unmarked graphs $G$ on the vertex
set $\{1, \dots, n\}$ with $m$ edges such that $\dlp(U(G), \mu) < \epsilon$.

\begin{lem}
  \label{lem:Tx-continuous}
For $[G,o]$ and $[G',o']$ in $\mGb_*$, we have $d_*([\mathsf{UM}(G), o], [\mathsf{UM}(G'), o']) \leq \bar{d}_*([G, o], [G', o'])$.
\end{lem}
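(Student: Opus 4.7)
The plan is to unwind both metrics to their definitions via depth--$h$ isomorphism classes and observe that forgetting marks can only make more pairs of neighborhoods isomorphic, hence can only increase the depth up to which $(G,o)$ and $(G',o')$ agree.

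More concretely, let $\hat{h}$ denote the supremum of all integers $h \geq 0$ such that $(G,o)_h \equiv (G',o')_h$ as rooted marked graphs, so that $\bar{d}_*([G,o],[G',o']) = 1/(1+\hat{h})$ by definition (with the convention $\hat{h}=0$ if $\tau_G(o) \neq \tau_{G'}(o')$, corresponding to distance $1$). Similarly, let $\hat{h}'$ denote the supremum of all $h \geq 0$ such that $(\mathsf{UM}(G), o)_h \equiv (\mathsf{UM}(G'), o')_h$ as rooted unmarked graphs, so that $d_*([\mathsf{UM}(G),o],[\mathsf{UM}(G'),o']) = 1/(1+\hat{h}')$.

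The key step is to show $\hat{h}' \geq \hat{h}$. Suppose $(G,o)_h \equiv (G',o')_h$ via a bijection $\phi$ of the depth--$h$ vertex sets preserving adjacency, root, vertex marks and edge marks. Then this same $\phi$ witnesses an isomorphism $(\mathsf{UM}(G),o)_h \equiv (\mathsf{UM}(G'),o')_h$, because $\mathsf{UM}$ preserves the vertex set and adjacency relations of $(G,o)_h$ and the conditions on vertex/edge marks become vacuous after stripping marks. Hence every $h$ admissible for the marked comparison is admissible for the unmarked comparison, giving $\hat{h}' \geq \hat{h}$, and therefore
\[
d_*([\mathsf{UM}(G),o],[\mathsf{UM}(G'),o']) = \frac{1}{1+\hat{h}'} \leq \frac{1}{1+\hat{h}} = \bar{d}_*([G,o],[G',o']).
\]

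There is no real obstacle here; the only minor subtlety is handling the boundary case $\hat{h} = 0$ arising from mismatched root marks, where the inequality is still trivially satisfied because $d_* \leq 1$. The argument is independent of representatives, since any two representatives of $[G,o]$ (respectively $[G',o']$) give canonically identified depth--$h$ neighborhoods up to marked isomorphism, and $\mathsf{UM}$ commutes with isomorphism.
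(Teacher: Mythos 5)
Your argument is correct and is essentially the paper's own proof: both rest on the observation that a marked isomorphism of depth-$h$ neighborhoods remains an isomorphism after stripping marks, so the depth of agreement can only increase under $\mathsf{UM}$, which immediately yields $d_*\leq\bar{d}_*$. The paper merely phrases this via an $\epsilon$ formulation rather than comparing the suprema $\hat{h}$ and $\hat{h}'$ directly, but the content is identical.
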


\begin{proof}
  By definition, for $\epsilon>0$, the condition $\bar{d}_*([G, o], [G', o']) < \epsilon$ means that for
  some $k$ with $1/(1+k) < \epsilon$, we have $[G, o]_k \equiv [G', o']_k$.
  This implies $[\mathsf{UM}(G), o]_k \equiv [\mathsf{UM}(G'), o']_k$, which in particular means
  that  $d_*([\mathsf{UM}(G), o], [\mathsf{UM}(G'), o'])\leq 1/(1+k) < \epsilon$. 
\end{proof}

\begin{lem}
  \label{lem:Gnmn-Tx}
  Assume $\mu \in \mP(\mGb_*)$ is given. Let $\tilde{\mu} \in \mP(\mG_*)$ be the
  law of $[\mathsf{UM}(G), o]$ when $[G,o]$ has law $\mu$. Then, given an
  integer $n$, edge and vertex mark count vectors $\vmn$ and $\vun$ respectively, and
  $\epsilon>0$, for all $G \in \mGnmnun(\mu, \epsilon)$, we have $\mathsf{UM}(G)
  \in \mG_{n, m_n}(\tilde{\mu}, \epsilon)$ where $m_n:= \snorm{\vmn}_1$.
\end{lem}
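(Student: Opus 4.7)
The plan is to prove two facts: first that $\mathsf{UM}(G)$ has exactly $m_n = \snorm{\vmn}_1$ edges, and second that $\dlp(U(\mathsf{UM}(G)), \tilde{\mu}) < \epsilon$. The edge count is immediate: since $G \in \mGnmnun(\mu, \epsilon) \subset \mGn_{\vmn, \vun}$, its edge mark count vector equals $\vmn$, so the total number of edges in $G$ is $\sum_{x \le x' \in \edgemark} m_n(x,x') = \snorm{\vmn}_1 = m_n$, and unmarking preserves the underlying edge set.

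For the \LP bound, I would introduce the map $T: \mGb_* \to \mG_*$ defined by $T([G, o]) := [\mathsf{UM}(G), o]$. This is well-defined on equivalence classes, since any isomorphism between rooted marked graphs induces an isomorphism between their underlying unmarked rooted graphs. From the definition of $U(\cdot)$ in \eqref{eq:UG}, one sees immediately that $T_* U(G) = U(\mathsf{UM}(G))$, and by the definition of $\tilde{\mu}$ in the statement of the lemma, $T_* \mu = \tilde{\mu}$. Moreover, Lemma~\ref{lem:Tx-continuous} says precisely that $T$ is $1$-Lipschitz with respect to the metrics $\bar{d}_*$ on $\mGb_*$ and $d_*$ on $\mG_*$.

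The key step is then the general principle that pushforward by a $1$-Lipschitz map is nonexpansive in \LP distance. I would verify this directly: for any Borel $B \subset \mG_*$, the Lipschitz property gives $(T^{-1}(B))^\epsilon \subseteq T^{-1}(B^\epsilon)$, because if some $x \in \mGb_*$ lies within distance $\epsilon$ of a point of $T^{-1}(B)$, then its image under $T$ lies within distance $\epsilon$ of $B$. Applying the definition of $\dlp(U(G), \mu) < \epsilon$ to the Borel set $T^{-1}(B)$ and then pushing forward yields $T_* U(G)(B) \leq T_* \mu(B^\epsilon) + \epsilon$, and the symmetric inequality is identical. Combining everything, $\dlp(U(\mathsf{UM}(G)), \tilde{\mu}) = \dlp(T_* U(G), T_* \mu) \leq \dlp(U(G), \mu) < \epsilon$, as desired.

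I do not expect any real obstacle: the lemma is essentially a bookkeeping statement together with the functoriality of pushforwards under the $1$-Lipschitz unmarking map, and the only substantive input beyond definitions is Lemma~\ref{lem:Tx-continuous}, which is already in hand.
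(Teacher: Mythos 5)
Your proposal is correct and follows essentially the same route as the paper: the same unmarking map $T$, the same use of Lemma~\ref{lem:Tx-continuous} to get the inclusion $(T^{-1}(B))^{\epsilon} \subseteq T^{-1}(B^{\epsilon})$, and the same pushforward identities $T_* U(G) = U(\mathsf{UM}(G))$, $T_*\mu = \tilde{\mu}$. The only cosmetic difference is that the paper works with arbitrary $\delta' > \delta := \dlp(U(G),\mu)$ to conclude nonexpansiveness $\dlp(T_*U(G),T_*\mu) \leq \delta < \epsilon$, while you apply the definition directly at level $\epsilon$; to preserve the strict inequality one should run your argument at some level $\epsilon'$ with $\dlp(U(G),\mu) < \epsilon' < \epsilon$, which is exactly the claim you state and is immediate.
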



\begin{proof}
Fix $G \in \mGnmnun(\mu, \epsilon)$.
  Note that $\mathsf{UM}(G)$ has $m_n$ edges, and we only need to show that
  $\dlp(U(\mathsf{UM}(G)), \tilde{\mu}) < \epsilon$. 
Let $\delta:= \dlp(U(G), \mu)$. This means that for all $\delta' > \delta$, and
for all Borel sets $A$ in $\mGb_*$, we have $(U(G))(A) \leq \mu(A^{\delta'}) +
\delta'$ and $\mu(A) \leq (U(G))(A^{\delta'}) + \delta'$, where, $A^{\delta'}$
denotes the $\delta'$--extension of $A$. 
Define $T: \mGb_* \rightarrow \mG_*$ that maps $[G,o] \in
\mGb_*$ to $[\mathsf{UM}(G), o] \in \mG_*$. 
Lemma~\ref{lem:Tx-continuous} above implies that $T$ is continuous and in fact 1--Lipschitz. It is easy to see that
$U(\mathsf{UM}(G))$ is the pushforward of $U(G)$ under the mapping $G$. Also,
$\tilde{\mu}$ is the pushforward of $\mu$ under $T$. Using the fact that $T$ is
1--Lipschitz, it is easy to see that for any Borel set $B$ in $\mG_*$, and any
$\zeta > 0$,  we have $(T^{-1}(B))^{\zeta} \subset T^{-1}(B^{\zeta})$. Putting
these together, 
  for
$\delta' > \delta$ and a Borel set $B$ in $\mG_*$, we have 
\begin{equation*}
    U(\mathsf{UM}(G))(B) = U(G)(T^{-1}(B)) 
    \leq \mu((T^{-1}(B))^{\delta'}) + \delta'
    \leq \mu(T^{-1}(B^{\delta'})) + \delta'
    = \tilde{\mu}(B^{\delta'}) + \delta'.
\end{equation*}
Similarly,
\begin{align*}
  \tilde{\mu}(B) = \mu(T^{-1}(B)) \leq (U(G))((T^{-1}(B))^{\delta'}) + \delta' &\leq (U(G))(T^{-1}(B^{\delta'})) + \delta' \\
  &= (U(\mathsf{UM}(G)))(B^{\delta'}) + \delta'.
\end{align*}
Since this holds for any $\delta' > \delta$ and any Borel set $B$ in $\mG_*$, we
have $\dlp(U(\mathsf{UM}(G)), \tilde{\mu}) \leq \delta = \dlp(U(G), \mu) <
\epsilon$. Consequently, we have $\mathsf{UM}(G) \in \mG_{n, m_n}(\tilde{\mu},
\epsilon)$ and the proof is complete. 
\end{proof}

Now, we are ready to prove Lemma~\ref{lem:log-Gnmn-smaller-entropy}.

\begin{proof}[Proof of Lemma~\ref{lem:log-Gnmn-smaller-entropy}]
To simplify the notation, for  $\epsilon > 0$ define
\begin{align*}
  a_n(\epsilon) := \frac{\log|\mGnmnun(\mu, \epsilon)| - \snorm{\vmn} \log n}{n}.
\end{align*}
Note that there exists a subsequence $\{n_k\}$ such that $\limsup_{n \rightarrow
\infty}
a_n(\epsilon_n) = \lim_{k \rightarrow \infty}
a_{n_k}(\epsilon_{n_k})$.
Moreover, there is a further subsequence $n_{k_r}$ such that for all $x,x' \in
\edgemark$, there exists $\bar{d}_{x,x'} \in [0,\infty]$ where
\begin{subequations}
\begin{align}
  \frac{m^{(n_{k_r})}(x,x')}{n_{k_r}} &\rightarrow \bar{d}_{x,x'},  \qquad x \neq x'; \label{eq:gen-limsup-bard-neq}\\
  \frac{2m^{(n_{k_r})}(x,x)}{n_{k_r}} & \rightarrow \bar{d}_{x,x}. 
  \label{eq:gen-limsup-bard-eq}
\end{align}
\end{subequations}
Observe that since $a_{n_k}(\epsilon_{n_k})$ is convergent, it suffices that we focus on
the subsequence $\{n_{k_r}\}$ and show that $\lim
a_{n_{k_r}}(\epsilon_{n_{k_r}}) \leq \bch(\mu)$.
Note that due to conditions~\eqref{eq:gen-limsup-x-neq} and \eqref{eq:gen-limsup-x-eq}, we have $\bar{d}_{x,x'} \geq
\deg_{x,x'}(\mu)$ for all $x, x' \in \edgemark$. Therefore, there are  two possible
cases: either $\bar{d}_{x,x'} = \deg_{x,x'}(\mu)$ for all $x, x' \in \edgemark$,
or there exist $x, x' \in \edgemark $ such that
$\bar{d}_{x,x'} > \deg_{x,x'}(\mu)$. To simplify the notation, without loss of
generality,  we may assume that
the subsequence $n_{k_r}$ is the whole sequence, i.e.\ $a_n(\epsilon_n)$ is
convergent, and~\eqref{eq:gen-limsup-bard-neq} and~\eqref{eq:gen-limsup-bard-eq}
hold for the whole sequence. 

\underline{Case 1:} $\bar{d}_{x,x'} = \deg_{x,x'}(\mu)$ for all $x, x' \in
\edgemark$. 
We define edge and vertex mark count
vectors $\vtmn = (\tmn(x,x') : x,x' \in \edgemark)$ and $\vtun = (\tun(\theta):
\theta \in \vermark)$ as follows. For $x, x' \in \edgemark$, define $\tmn(x,x')$
 to be $\mn(x,x')$  if $\deg_{x,x'}(\mu) > 0$ and $0$ otherwise. Also, fix some
 $\theta_0 \in \vermark$ such that $\vtype_{\theta_0}(\mu) > 0$. For $\theta
 \in \vermark$, define
 \begin{equation*}
   \tun(\theta) =
   \begin{cases}
     0, & \text{ if } \vtype_\theta(\mu) = 0, \\
     \un(\theta), & \text{ if } \vtype_\theta(\mu) > 0 \text{ and } \theta \neq \theta_0, \\
     \un(\theta_0) + \sum_{\theta': \vtype_{\theta'}(\mu) = 0 } \un(\theta'), & \text{ if } \theta = \theta_0.
   \end{cases}
 \end{equation*}
Note that, by construction and from~\eqref{eq:gen-limsup-bard-neq} and
\eqref{eq:gen-limsup-bard-eq}, the sequences $\vtmn$ and $\vtun$ are adapted to $(\vdeg(\mu),
\vvtype(\mu))$. Also, $\mn(x,x') \geq \tmn(x,x')$ for all $n$ and all $x,x' \in \edgemark$.

Now, fix $\epsilon>0$, and pick an integer $h$ such that $1/(1+h) < \epsilon$. Define $B$ to the set of $[G,o] \in \mGb_*$ such that either for some $x,x' \in
\edgemark$ with $\deg_{x,x'}(\mu) = 0$ there exists an edge in $[G,o]_h$ with
pair of marks $x,x'$, or, for some $\theta \in \vermark$ with $\vtype_\theta(\mu)
= 0$, there exists a vertex in $[G,o]_h$ with mark $\theta$. Then, from
Lemma~\ref{lem:everything-root}, we have $\mu(B) = 0$. On the other hand, for
$n$ large enough such that $\epsilon_n < 1/(1+h)$, we have $B^{\epsilon_n} = B$.
Hence, for large enough $n$ and $G \in \mGnmnun(\mu, \epsilon_n)$, we have $U(G)(B)
\leq \epsilon_n$. For such $G$, we construct a marked graph $\tG$ which is
obtained from $G$ by removing all edges which have their pair of marks $x,x'$ with
$\deg_{x,x'}(\mu) = 0$. Moreover, if a vertex $v$ in $G$ has mark $\theta$ with
$\vtype_\theta(\mu) = 0$, we change its mark to $\theta_0$ in $\tG$, with
the $\theta_0$ defined above. Note that
$\tG \in \mGn_{\vtmn, \vtun}$. Furthermore, since $U(G)(B) \leq \epsilon_n$, the
number of vertices $v$ in $G$ such that $(G,v)_h \equiv (\tG,v)_h$ is at least
$n(1 - \epsilon_n)$. Consequently, using
Lemma~\ref{lem:local-isomorphism-LP-distance}, when $n$ is large enough so that
$\epsilon_n < \epsilon$, we have $\dlp(G, \tG) \leq \max
\left\{ 1/(1+h), \epsilon_n \right\} < \epsilon$.
This means that $\tG \in \mGn_{\vtmn, \vtun}(\mu, \epsilon_n + \epsilon) \subset
\mGn_{\vtmn, \vtun}(\mu, 2\epsilon)$.

Motivated by this discussion, for $n$ large enough, we have 
\begin{equation}
  \label{eq:lem-limsup-mGnmnun-mGntmntun-bound}
  \begin{aligned}
    |\mGnmnun(\mu, \epsilon_n) | &\leq |\mGn_{\vtmn, \vtun}(\mu, 2\epsilon) | \times \prod_{\stackrel{x \leq x' \in \edgemark}{\deg_{x,x'}(\mu) = 0}} |\mG_{n, \mn(x,x') - \tmn(x,x')} | \times 2^{\mn(x,x') - \tmn(x,x')} \\
&\qquad \times \prod_{\theta \in \vermark} \binom{n}{|\un(\theta) - \tun(\theta)|}.
  \end{aligned}
\end{equation}
Here we have assumed that, since $\edgemark$ is finite, it is an ordered set.
For $x \leq x' \in \edgemark$ with $\deg_{x,x'}(\mu) = 0$, using
Lemma~\ref{lem:upper-bound-on-G_n-m_mlogn-n}, we have
\begin{align*}
  \log\left(  |\mG_{n, \mn(x,x') - \tmn(x,x')} | \times 2^{\mn(x,x') - \tmn(x,x')} \right) &\leq (\mn(x,x') - \tmn(x,x')) \log n \\
                                                                                           &\quad+ ns\left( \frac{2(\mn(x,x') - \tmn(x,x'))}{n} \right) \\
                                                                                            &\quad+ (\mn(x,x') - \tmn(x,x')) \log 2.
\end{align*}
Note that, for all $x,x' \in \edgemark$, $\frac{1}{n} (\mn(x,x') - \tmn(x,x'))
\rightarrow 0$. Also, for all $\theta \in \vermark$, $\frac{1}{n} |\un(\theta) -
\tun(\theta)| \rightarrow 0$. Additionally, $s(y)
\rightarrow 0$ as $y \rightarrow 0$. Using these 
in~\eqref{eq:lem-limsup-mGnmnun-mGntmntun-bound} and simplifying, we get
\begin{align*}
  \limsup_{n\rightarrow \infty} \frac{\log|\mGnmnun(\mu, \epsilon_n)| - \snorm{\vmn}_1 \log n}{n} &\leq \limsup_{n \rightarrow \infty}\frac{\log|\mGn_{\vtmn, \vtun}(\mu, 2\epsilon)| - \snorm{\vtmn}_1 \log n}{n} \\
  &\leq \bchover_{\vdeg(\mu), \vvtype(\mu)}(\mu, 2\epsilon)|_{\vtmn, \vtun},
\end{align*}
where the last inequality employs the fact that, by construction, $\vtmn$ and
$\vtun$ are adapted to $(\vdeg(\mu), \vvtype(\mu))$. The above inequality
holds for all $\epsilon> 0$. Therefore, from Theorem~\ref{thm:bch-properties}, as $\epsilon \rightarrow 0$, the right hand side converges to 
$\bch(\mu)$. This completes the proof for this case.

\underline{Case 2:} $\bar{d}_{x,x'} > \deg_{x,x'}(\mu)$ for some $x,x' \in
\edgemark$.  Let $\bar{d} := \sum_{x,x' \in \edgemark} \bar{d}_{x,x'}$. Note that
$\bar{d} > \deg(\mu)$. First,
assume that $\bar{d} = \infty$. Observe that
\begin{equation*}
  |\mGnmnun(\mu, \epsilon)| \leq |\mGnmnun| \leq |\vermark|^n \prod_{x \leq x' \in \edgemark} |\mG_{n, \mn(x,x')}| 2^{\mn(x,x')}.
\end{equation*}
Using Lemma~\ref{lem:upper-bound-on-G_n-m_mlogn-n},
\begin{equation}
  \label{eq:gen-limsup-infty-case-bound-1}
\begin{aligned}
  |\mGnmnun(\mu, \epsilon)| &\leq n \log |\vermark| + \snorm{\vmn}_1 \log n + n \sum_{x \leq x' \in \edgemark} s\left( \frac{2\mn(x,x')}{n} \right) + \mn(x,x') \log 2 \\
  &= n \log |\vermark| + \snorm{\vmn}_1 \log n + 2n \sum_{x \leq x' \in \edgemark} s\left( \frac{\mn(x,x')}{n} \right).
\end{aligned}
\end{equation}
Since we have assumed $\bar{d}= \infty$, there exist $\bar{x} \leq \bar{x}' \in
\edgemark$ such that $\bar{d}_{\bar{x}, \bar{x}'} = \infty$.
Therefore,~\eqref{eq:gen-limsup-bard-neq} and \eqref{eq:gen-limsup-bard-eq}
imply that $\mn(\bar{x}, \bar{x}') / n \rightarrow \infty$. On the other hand,
$s(y) \rightarrow -\infty$ as $y \rightarrow \infty$. Using these in
\eqref{eq:gen-limsup-infty-case-bound-1}, we get $\limsup_{n \rightarrow \infty}
a_n(\epsilon_n) = -\infty$ which completes the proof. Therefore, it remains to
consider the case $\bar{d} < \infty$.

 Let $\tilde{\mu} \in \mP(\mTb_*)$ be the law of $[\mathsf{UM}(T),o]$ when
$[T,o]$ has law $\mu$, and let $m_n := \snorm{\vmn}_1$. From
Lemma~\ref{lem:Gnmn-Tx}, if $G \in \mGnmnun(\mu, \epsilon_n)$, we have $\mathsf{UM}(G)
\in \mG_{n, m_n}(\tilde{\mu}, \epsilon_n)$.
Moreover, by finding an upper bound on the number of possible ways to mark
vertices and edges for an unmarked graph in $\mG_{n,m_n}$, we have 
\begin{equation}
  \label{eq:limsup-gen-bound-2}
  |\mGnmnun(\mu, \epsilon_n)| \leq |\mG_{n, m_n}(\tilde{\mu}, \epsilon_n)| \times |\vermark|^n \times \frac{m_n!}{\prod_{x \leq x'} \mn(x,x')!} \times 2^{m_n}.
\end{equation}
Note that $m_n / n \rightarrow \bar{d}/2 < \infty$, and $\mn(x,x') / n$
converges to $\bar{d}_{x,x'}/2$ when $x = x'$, and $\bar{d}_{x,x'}$ when $x
\neq x'$. Hence, 
\begin{align*}
  \lim_{n \rightarrow \infty} \frac{1}{n} \log \left( |\vermark|^n \times \frac{m_n!}{\prod_{x \leq x'} \mn(x,x')!} \times 2^{m_n}  \right) &= \log |\vermark| + \sum_{x < x' \in \edgemark} \bar{d}_{x,x'} \log \frac{\bar{d}}{\bar{d}_{x,x'}} \\
  &\quad +\sum_{x \in \edgemark} \frac{\bar{d}_{x,x}}{2} \log \frac{2\bar{d}}{\bar{d}_{x,x}}  =: \alpha.
\end{align*}
Note that, as $\bar{d} < \infty$, $\alpha$ is a bounded real number. Also, since
$\epsilon_n \rightarrow 0$, for each $\epsilon>0$ fixed we have $\epsilon_n< \epsilon$  for
$n$ large enough. Putting these in~\eqref{eq:limsup-gen-bound-2}, we get
\begin{equation}
  \label{eq:gen-limsup-bound-3}
  \limsup_{n \rightarrow \infty} a_n(\epsilon_n) \leq \alpha + \limsup_{n \rightarrow \infty} \frac{\log|\mG_{n, m_n}(\tilde{\mu}, \epsilon)| - m_n \log n}{n}.
\end{equation}
Note that $\deg(\tilde{\mu})  = \deg(\mu)$. Moreover, $m_n / n \rightarrow \bar{d}
> \deg(\mu) = \deg(\tilde{\mu}) > 0$. Furthermore, our notion of marked BC entropy
reduces to the unmarked BC entropy of \cite{bordenave2014large} when
$\vermark$ and $\edgemark$ have cardinality one. Therefore, since $\bar{d} \neq
\deg(\tilde{\mu})$, from part~3 of Theorem~\ref{thm:badcases}, (or equivalently
from part 3 of Theorem~1.2 in \cite{bordenave2014large}), the right hand side
of~\eqref{eq:gen-limsup-bound-3} goes to $-\infty$ as $\epsilon\rightarrow 0$.
Therefore, $\limsup_{n\rightarrow \infty} a_n(\epsilon_n) = -\infty$, which
completes the proof.
\end{proof}

\begin{proof}[Proof of Lemma~\ref{lem:Gn-lwc-trim}]
Let $\mu_n$ and $\tilde{\mu}_n$ denote $U(\Gn)$ and $U((\Gn)^{\Delta_n})$ respectively.
  For an integer $k \ge 0$ 
  and a marked rooted tree $(T, i)$ with depth at most $k$, define
  \begin{equation*}
    A^k_{(T, i)} := \{ [G, o] \in \mGb_*: (G, o)_k \equiv (T, i) \},
  \end{equation*}
  as in \eqref{eq:Ah}.
  From Lemma~\ref{lem:eq-condition-local-weak-convergence} in Section~\ref{sec:local-weak-conv}, in order to show $\tilde{\mu}_n \Rightarrow \mu$, it suffices to show that $\tilde{\mu}_n(A^k_{(T, i)}) \rightarrow \mu(A^k_{(T, i)})$ for all such $k$ and $(T,i)$.  We will now do this.
  Fix some integer $k \ge 0$ 
    throughout the rest of the discussion.
  For an integer $\Delta$, define
  \begin{equation*}
    B^\Delta 
    = \{ [G, o] \in \mGb_*: \deg_{G}(j) > \Delta \text{ for some } j \text{ with distance at most } k+1 \text{ from } o \}.
  \end{equation*}
With this, we have 
\begin{equation}
  \label{eq:lim-Delta-infty-mu-BkDelta}
  \begin{split}
    \lim_{\Delta \rightarrow \infty} 
    \mu((B^\Delta)^c) 
    &= \mu\left ( \bigcup_{\Delta=1}^\infty 
    (B^\Delta)^c \right ) \\
    &= \mu(\deg_{G}(j) < \infty \text{ for all $j$ with distance at most $k+1$ from $o$} ) \\
    &= 1,
  \end{split}
\end{equation}
where the last equality comes from the fact that all graphs in $\mGb_*$ are locally finite. Next, define 
\begin{equation*}
\begin{split}
  C_n &:= \{ i \in V(\Gn): \deg_{\Gn}(j) \leq \Delta_n \text{ for all nodes $j$ in $\Gn$ with distance at most $k+1$ from $i$} \} \\
  &= \{ i \in V(\Gn): [\Gn, i] \in 
  (B^{\Delta_n})^c \}.
\end{split}
\end{equation*}
Now, since $V(\Gn) = \{1, \dots, n\}$, we have 
\begin{equation*}
  \begin{split}
    \tilde{\mu}_n(A_{(T, i)}) &= \frac{1}{n} \sum_{j =1}^n \one{({(\Gn)}^{\Delta_n} , j)_k \equiv (T, i)} \\
    &= \frac{1}{n} \sum_{j \in C_n} \one{({(\Gn)}^{\Delta_n}, j)_k \equiv (T, i)} + \frac{1}{n} \sum_{j \in C_n^c} \one{({(\Gn)}^{\Delta_n}, j)_k \equiv (T, i)} \\
    &= \frac{1}{n} \sum_{j \in C_n} \one{(\Gn, j)_k \equiv (T, i)} + \frac{1}{n} \sum_{j \in C_n^c} \one{({(\Gn)}^{\Delta_n}, j)_k \equiv (T, i)}.
  \end{split}
\end{equation*}
Comparing this to 
\begin{equation*}
  \mu_n(A_{(T, i)}) = \frac{1}{n} \sum_{j=1}^n \one{(\Gn, j)_k \equiv (T, i)},
\end{equation*}
we realize that
\begin{equation*}
  |\tilde{\mu}_n(A_{(T, i)}) - \mu_n(A_{(T, i)}) | \leq \frac{1}{n} |C_n^c| = 
  \mu_n(B^{\Delta_n}).
\end{equation*}
Now, fix an integer  $\Delta>0$. Since $\Delta_n \rightarrow \infty$, we have $\Delta < \Delta_n$ for $n$ large enough. Moreover, as 
$B^\Delta$ is closed,
\begin{equation*}
  \limsup_{n \rightarrow \infty}   |\tilde{\mu}_n(A_{(T, i)}) - \mu_n(A_{(T, i)}) |  \leq \limsup_{n \rightarrow \infty} 
  \mu_n(B^\Delta) \leq \mu(B^\Delta).
\end{equation*}
This is true for all $\Delta>0$; therefore, sending $\Delta$ to infinity and
using~\eqref{eq:lim-Delta-infty-mu-BkDelta}, we have $ |\tilde{\mu}_n(A_{(T,
  i)}) - \mu_n(A_{(T, i)}) | \rightarrow 0$. On the other hand, we have assumed
that  $\mu_n \Rightarrow \mu$. Therefore,
Lemma~\ref{lem:eq-condition-local-weak-convergence} implies that $\mu_n(A_{(T,
  i)}) \rightarrow \mu(A_{(T, i)})$. This means that $\tilde{\mu}_n(A_{(T, i)})
\rightarrow \mu(A_{(T, i)})$. Since this is true for all $k$ and $(T, i)$,
Lemma~\ref{lem:eq-condition-local-weak-convergence} implies that $\tilde{\mu}_n
\Rightarrow \mu$, which completes  the proof.
\end{proof}

\begin{proof}[Proof of Lemma~\ref{lem:A-k-delta-o(n/logn)}]
  In order to count $|\mA_{k_n, \Delta_n}|$, note that, for $\Delta_n \ge 2$, a rooted graph of depth at most $k_n$ and maximum degree at most $\Delta_n$ has at most 
  \begin{equation*}
    1 + \Delta_n + \Delta_n^2 + \dots + \Delta_n^{k_n} \leq \Delta_n^{k_n+1},
  \end{equation*}
many vertices, each of which has $|\vermark|$ many choices for the vertex mark.
On the other hand, such a graph can have at most $\Delta_n^{2(k_n+1)}$ many
edges, each of which can be present or not, and, if present, has $|\edgemark|^2$
many choices for the edge  mark. Consequently, 
\begin{equation*}
  |\mA_{k_n, \Delta_n}| \leq (1+|\edgemark|^2)^{\Delta_n^{2(k_n +1)}}|\vermark|^{\Delta_n^{k_n+1}}. 
\end{equation*}
Therefore,
\begin{align*}
  \log |\mA_{k_n, \Delta_n}| &\leq \Delta_n^{2(1+k_n)} \log (1+|\edgemark|^2) + \Delta_n^{1+k_n} \log |\vermark| \\
                             &\leq \Delta_n^{2(1+k_n)} \log (|\vermark|(1+|\edgemark|^2))  \leq \Delta_n^{4k_n} \log (|\vermark|(1+|\edgemark|^2)),
\end{align*}
where the last inequality holds for $n$ large enough such that $k_n \geq 1$.
Note that in order to show $|\mA_{k_n, \Delta}| = o(n / \log n)$, it suffices to
show that $\log|\mA_{k_n, \Delta_n}| - \log(n/\log n) \rightarrow -\infty$.
Motivated by the above inequality, we observe that this is satisfied if $\Delta_n^{4k_n} =
O(\sqrt{\log n})$. Suppose now that $\Delta_n \le \log \log n$ and 
$k_n \le \sqrt{ \log \log n }$. For $n$ large enough, we have 
\begin{equation*}
  \log (\Delta_n^{4k_n}) \le 4 \sqrt{\log \log n} \log \log \log n \leq \frac{1}{2}\sqrt{\log \log n} \sqrt{\log \log n} = \frac{1}{2} \log \log n.
\end{equation*}
This means that for $n$ large enough we have $\Delta_n^{4k_n} \leq \sqrt{\log n}$.
This completes the proof.
\end{proof}

\begin{proof}[Proof of Lemma~\ref{lem:tn-infty-An-o(n)}]
  For $\Delta > 0$, define $B_\Delta \subset \mGb_*$ as 
  \begin{equation*}
    B_\Delta := \{ [G, o] \in \mGb_*: \deg_{G}(o) \le \Delta \text{ and } \deg_{G}(i) \le \Delta \text{ for all } i \sim_{G} o \}.
  \end{equation*}
Since all graphs in $\mGb_*$ are locally finite, we have $\mu(B_\Delta) \rightarrow 1$ as $\Delta \rightarrow \infty$. On the other hand, 
\begin{equation*}
  \frac{|R_n|}{n} = U(\Gn)(B_{\Delta_n}^c).
\end{equation*}
Since $\Delta_n \rightarrow \infty$, 
we have $B_\Delta\subseteq B_{\Delta_n}$ for $n$ large enough,
for any value of $\Delta$.  Moreover, $B_\Delta$ is both open and closed. Therefore,
\begin{equation*}
  \frac{|R_n|}{n} = U(\Gn)(B_{\Delta_n}^c) \leq U(\Gn)(B_\Delta^c) \rightarrow \mu(B_\Delta^c).
\end{equation*}
But this is true for all $\Delta$, and $\mu(B_\Delta) \rightarrow 1$ as $\Delta \rightarrow \infty$. Consequently, $|R_n|/n \rightarrow 0$, and the proof is complete. 
\end{proof}





\end{document}